\newcommand*\bigcdot{\mathpalette\bigcdot@{.5}}
\newcommand*\bigcdot@[2]{\mathbin{\vcenter{\hbox{\scalebox{#2}{$\m@th#1\bullet$}}}}}
\DeclareFontFamily{OMX}{MnSymbolE}{}
\DeclareSymbolFont{MnLargeSymbols}{OMX}{MnSymbolE}{m}{n}
\DeclareFontShape{OMX}{MnSymbolE}{m}{n}{
    <-6>  MnSymbolE5
   <6-7>  MnSymbolE6
   <7-8>  MnSymbolE7
   <8-9>  MnSymbolE8
   <9-10> MnSymbolE9
  <10-12> MnSymbolE10
  <12->   MnSymbolE12
}{}
\DeclareFontShape{OMX}{MnSymbolE}{b}{n}{
    <-6>  MnSymbolE-Bold5
   <6-7>  MnSymbolE-Bold6
   <7-8>  MnSymbolE-Bold7
   <8-9>  MnSymbolE-Bold8
   <9-10> MnSymbolE-Bold9
  <10-12> MnSymbolE-Bold10
  <12->   MnSymbolE-Bold12
}{}
\let\llangle\@undefined
\let\rrangle\@undefined
\DeclareMathDelimiter{\llangle}{\mathopen}%
                     {MnLargeSymbols}{'164}{MnLargeSymbols}{'164}
\DeclareMathDelimiter{\rrangle}{\mathclose}%
                     {MnLargeSymbols}{'171}{MnLargeSymbols}{'171}
\numberwithin{equation}{section}
\newtheorem{theorem}{Theorem}[section]
\newtheorem{lemma}[theorem]{Lemma}
\newtheorem{proposition}[theorem]{Proposition}
\newtheorem{definition}[theorem]{Definition}
\theoremstyle{remark}
\newtheorem{remark}[theorem]{Remark}
\def\XXint#1#2#3{{\setbox0=\hbox{$#1{#2#3}{\int}$ }
		\vcenter{\hbox{$#2#3$ }}\kern-.6\wd0}}
\newcommand{\mres}{\mathbin{\vrule height 1.6ex depth 0pt width
		0.13ex\vrule height 0.13ex depth 0pt width 1.3ex}}
\newcommand{\bv}{\operatorname{BV}}
\newcommand{\dif}{\mathrm{d}}
\newcommand{\D}{\mathrm{D}}
\newcommand{\R}{\mathbb{R}}
\newcommand{\B}{\mathscr{B}}
\newcommand{\C}{\mathbb{C}}
\newcommand{\wstar}{\stackrel{*}{\rightharpoonup}}
\newcommand{\ccinfty}{\operatorname{C}_{c}^{\infty}}
\newcommand{\sobo}{\operatorname{W}}
\renewcommand{\geq}{\geqslant}
\newcommand{\hold}{\operatorname{C}}
\renewcommand{\leq}{\leqslant}
\newcommand{\sym}{\operatorname{sym}}
\newcommand{\lin}{\operatorname{Lin}}
\newcommand{\U}{\mathbb U}
\newcommand{\V}{\mathbb V}
\newcommand{\eps}{\bm{\varepsilon}}
\newcommand{\ym}{\bm{\nu}}
\newcommand{\Y}{\mathrm{Y}}
\newcommand{\wk}{\rightharpoonup}
\newcommand{\Adm}{\mathrm{Adm}}
\newcommand{\gt}{\mathrm{gt}}
\newcommand{\PSNR}{\mathrm{PSNR}}
\newcommand{\stat}{\mathrm{stat}}
\title[Bilevel schemes for TV-type functionals in imaging]{Bilevel~training~schemes~in~imaging \\ for total-variation-type functionals \\ with~convex~integrands}
\author[V. Pagliari]{Valerio Pagliari}	
\author[K. Papafitsoros]{Kostas Papafitsoros}	
\author[B. Rai\cb{t}\u{a}]{Bogdan Rai\cb{t}\u{a}}
\author[A. Vikelis]{Andreas Vikelis}
\begin{document}
\begin{abstract}
In the context of image processing,
given a $k$-th order, homogeneous and linear differential operator with constant coefficients,
we study a class of variational problems 
whose regularizing terms depend on the operator.
Precisely, the regularizers are integrals
of spatially inhomogeneous integrands
with convex dependence on the differential operator applied to the image function.
The setting is made rigorous by means of the theory of Radon measures
and of suitable function spaces modeled on $\bv$. 
We prove the lower semicontinuity of the functionals at stake
and existence of minimizers for the corresponding variational problems.
Then, we embed the latter into a bilevel scheme
in order to automatically compute the space-dependent regularization parameters,
thus allowing for good flexibility and preservation of details in the reconstructed image. 
We establish existence of optima for the scheme
and we finally substantiate its feasibility
by numerical examples in image denoising.
The cases that we treat are Huber versions of the first and second order total variation
with both the Huber and the regularization parameter being spatially dependent. Notably the spatially dependent version of second order total variation produces high quality reconstructions when compared to regularizations of similar type, and the introduction of the spatially dependent Huber parameter leads to a further enhancement of the image details.
\end{abstract}
\maketitle


 \section{Introduction}
 
 In this contribution we study a bilevel training scheme
for the automatic selection of spatially varying regularization weights
in the framework of variational image reconstruction.
Specifically, given a suitably defined class $\Adm$ of admissible weights $\alpha$,
we look for solutions to the problem
\begin{equation}\label{eq:level1}
	\alpha^\ast \in {\rm argmin} \left\{ F(u_\alpha) : \alpha \in \Adm \right\},
\end{equation}
where $F$ is an assigned cost functional and
$u_\alpha$ is an image reconstructed by minimizing
\begin{equation}\label{eq:I=Phi+R}
	I[u;\alpha] \coloneqq \Phi_g(u) + \mathcal{R}(u;\alpha).
\end{equation}
Here, $\Phi_g$ is a fidelity term
that penalizes deviations of $u$ from the datum $g$, whereas
$\mathcal{R}(u;\alpha)$ is a regularization functional
whose strength can be tuned by an appropriate selection of the regularization parameter $\alpha$ belonging to the admissible set $\Adm$.
The datum $g$ is typically a corrupted version of some ground truth image $u_{\gt}$.
Often, one has 
\begin{equation*}
	g = T u_{\gt} +\eta,
\end{equation*}
with  $\eta$ denoting a random noise component and
 $T$ being a bounded linear operator
 that corresponds to a certain image reconstruction problem.
 For instance, $T$ is a blurring operator in the case of deblurring,
 a sub-sampled Fourier transform in magnetic resonance imaging (MRI), 
 the Radon transform in tomography,
 or simply the identity in denoising tasks, on which we will be focusing here.
 The aim of solving a problem of the type \eqref{eq:I=Phi+R} for suitable $\Phi_g$, $\mathcal R$ and $\alpha$
 is to obtain an output $u$
 which represents as well as possible the initial ground truth image $u_{\gt}$.
 We concisely point out here that
 the main novelty of the present paper consists in establishing existence of solutions to the scheme
for inhomogeneous regularizers of the type
\begin{equation}\label{first_intro:BV-B}
	\mathcal R(u;\alpha) = \int_{\Omega}\alpha(x) f(x,\dif \B u),
\end{equation}
and present numerical results that fit this framework.
Here, $\Omega\subset \mathbb{R}^{n}$ is the image domain,
$\alpha\in \Adm\subset L^\infty(\Omega,[0,+\infty))$ belongs to a class of admissible weights,
$f$ is a Carath\'eodory integrand that is convex in the second entry, and
$\B$ is a linear, $k$-th order, homogeneous differential operator with constant coefficients. Before we report further on our contribution,
we proceed with a brief review of regularization functionals in image reconstruction.

%

Among classical regularization functionals we find the total variation (TV) \cite{ROF92, ChambolleLions},
as well as higher order or anisotropic extensions of it.
Particularly relevant for this work are the second order total variation (TV$^{2}$) \cite{PaSc, Piffet} and  the total generalized variation (TGV) \cite{TGV}.
For a function $u\in L^{1}(\Omega)$,
these functionals are defined by duality as follows: 
\begin{align}
\mathrm{TV}(u)&= \sup \left \{\int_{\Omega} u\,\mathrm{div}\phi\, \dif x: \; \phi\in\ccinfty(\Omega, \mathbb{R}^{n}), \; \|\phi\|_{\infty}\leq 1 \right \}\label{def:TV},\\
\mathrm{TV}^{2}(u)&= \sup \left \{\int_{\Omega} u\,\mathrm{div}^{2}\phi \, \dif x: \; \phi\in \ccinfty(\Omega, \mathbb{R}^{n\times n}), \; \|\phi\|_{\infty}\leq 1 \right \}\label{def:TV2},\\
\mathrm{TGV}(u)&=  \sup \left \{\int_{\Omega} u\,\mathrm{div}^{2}\phi \, \dif x: \; \phi\in \ccinfty(\Omega, {\mathcal{S}^{n\times n}}), \; \|\phi\|_{\infty}\leq \alpha_{0}, \; \|\mathrm{div}^{2}\phi\|_{\infty}\leq \alpha_{1} \right \}.\label{def:TGV}
\end{align}
{Here $\mathcal{S}^{n\times n}$ denotes the space of $n\times n$ symmetric matrices}.
Note that the scalar regularization parameters $\alpha_{0}, \alpha_{1}>0$ are inserted within the definition of TGV,
while the other functionals admit a single weighting parameter $\alpha$ acting in a multiplicative way,
i.e., $\alpha \mathrm{TV}$ and $\alpha\mathrm{TV}^{2}$.
If the supremum in \eqref{def:TV} is finite,
then we say that $u\in \bv(\Omega)$, the space of functions of bounded variation \cite{AFP00},
and $\mathrm{TV}(u)=|\D u|(\Omega)$, where $|\D u|$ is the total variation measure associated with the distributional derivative $\D u\in \mathcal{M}(\Omega, \mathbb{R}^{n})$.
Similarly, if the right-hand side in \eqref{def:TV2} is finite,
then $u\in \bv^{2}(\Omega)$, the space of functions of bounded second variation \cite{PaSc, Piffet},
and $\mathrm{TV}^{2}(u)=|\D^{2}u|(\Omega)$, with $\D^{2}u\in \mathcal{M}(\Omega, \mathcal{S}^{n\times n})$.
Finally, it turns out that if the supremum in \eqref{def:TGV} is finite,
then $u\in \mathrm{BV}(\Omega)$ as well and
    $$
    \mathrm{TGV}(u)=\min_{w\in \mathrm{BD}(\Omega)}
    \left\{
    \alpha_{1}\int_{\Omega} \dif|\D u-w| + \alpha_{0}\int_{\Omega} \dif|\mathscr{E}w|
    \right\},
    $$
see \cite{BredValk, bredies2014regularization}.
In the previous formula, $\mathrm{BD}(\Omega)$ is the space of functions of bounded deformation and
$\mathscr{E}w$ denotes the symmetrized gradient of $w$.
The advantage of higher order regularizers lies in their capability
to reduce an undesirable artifact typical of TV, the so-called staircasing effect, that is,
the creation of cartoon-like piecewise constant structures in the reconstruction \cite{papafitsorosphd}.

Grounding on the concept of convex functions of Radon measures \cite{Dem},
variants of the above regularizers involving convex integrands have also been considered in the literature \cite{PaSc, vese2001study, stadler}.
A widely used example is the one of Huber total variation $\mathrm{TV}_{\gamma}$,
which is defined for $u\in \mathrm{BV}(\Omega)$ as 
\begin{equation}\label{intro:hTV}
\mathrm{TV}_{\gamma}(u)=\int_{\Omega} f_{\gamma}(\dif \D u)= \int_{\Omega} f_{\gamma}(\nabla u) \dif x+ \int_{\Omega} \dif|\D^{s}u|,
\end{equation}
with $\nabla u$ and $\D^{s}u$ denoting respectively
the absolutely continuous and the singular part of $\D u$ with respect to the Lebesgue measure.
The function $f_{\gamma}\colon \mathbb{R}^{n} \to \mathbb{R}$ is given for $\gamma\ge 0$ by
\begin{equation}\label{intro:f_gamma}
f_{\gamma}(z)=
\begin{cases}
|z|-\dfrac{1}{2} \gamma, & \text { if } |z|\ge \gamma,\\[0.8em]
\dfrac{1}{2\gamma} |z|^{2}, & \text { if } |z| < \gamma.
\end{cases}
\end{equation}
Note that $\mathrm{TV}_{\gamma}(u)$ can be equivalently defined via duality as
\begin{equation}\label{intro:hTV_dual}
 \mathrm{TV}_{\gamma}(u)=\sup \left \{\int_{\Omega} u\,\mathrm{div}\phi \, \dif x  
 -\frac{1}{2} \gamma\int_{\Omega}  |\phi|^{2} \dif x : \phi\in \ccinfty(\Omega, \mathbb{R}^n), \; \|\phi\|_{\infty}\leq 1  \right \},
 \end{equation}
see \cite{Dem}.
This modification of TV,
which corresponds to a smoothing of the $\ell_{1}$ norm in the discrete setting,
is typically considered
in order to employ classical smooth numerical solvers for the solution of the minimization problem \eqref{eq:I=Phi+R}.
In this specific case, however, it also leads to a reduction of the staircasing effect
by penalizing small gradients with the Tikhonov term $(2\gamma)^{-1} \int_{\Omega} |\nabla u|^{2}dx$,
which promotes smooth reconstructions \cite{stadler, journal_tvlp}.

In all these models, the choice of the weights in the regularization term is crucial
to establish an adequate balance between data fitting and denoising. On one hand, the reconstructed $u$ may remain too noisy or have many artifacts  if the regularization is too weak.
On the other hand, a very strong regularization may result in an unnatural smoothing effect.
In the last years, bilevel minimization methods have been employed to select these weights automatically.
A subfamily of these methods assumes the existence of one or several training pairs $(u_{\gt}, g)$
consisting of the ground truth and its corrupted counterpart
\cite{bilevellearning, TGV_learning2, GVlearningfirst, noiselearning, pockbilevel}.
In these works the energy in the upper level problem \eqref{eq:level1} is usually given by 
\begin{equation}\label{intro:ul_psnr}
F_{\PSNR}(u)\coloneqq\|u-u_{\gt}\|_{L^{2}(\Omega)}^{2},
\end{equation}
 and its minimization essentially corresponds to finding reconstructions that are closest to the ground truth in the least square sense.
Since typical images generally feature both homogeneous regions and fine details,
it is reasonable to assume that the optimal  regularization intensity is not uniform throughout the domain. This matter of fact has prompted researchers to consider bilevel schemes that output space-dependent weights, i.e., functions $\alpha\colon \Omega \to [0,+\infty)$ \cite{carola_spatial, reyes2021optimality}.
A recent series of papers \cite{HiRa17, hintermuellerPartII, bilevel_handbook, bilevelTGV} deals with schemes for TV and TGV
that yield such weights without resorting to the ground truth.
If the corrupted datum $g$ is obtained by some additive Gaussian noise $\eta$ with variance $\sigma^{2}$,
this is achieved by the introduction of the statistics-based upper level objective 
\begin{align}\label{intro:ul_stats}
  F_{\stat}(u) \coloneqq \frac{1}{2} \int_{\Omega} \max(Ru-\overline{\sigma}^{2},0)^{2} \dif x + \frac{1}{2} \int_{\Omega} \min (Ru-\underline{\sigma}^{2},0)^{2} \dif x,
\end{align}
where $\underline{\sigma}^{2}\coloneqq \sigma^{2}-\epsilon$, $\overline{\sigma}^{2}\coloneqq \sigma^{2}+\epsilon$, and
\begin{align*}
  {R}u(x)\coloneqq \int_{\Omega} w(x,y) (u-g)^{2}(y)\dif y\;\; \text{ for } w\in L^{\infty}(\Omega\times \Omega), \text{  } \int_{\Omega}\int_{\Omega} w(x,y) \dif x \dif y=1.
\end{align*}
 The idea is that if the reconstructed image $u$ is close to $u_{\gt}$, then it is expected that  on average the value of $R u(x)$ will be close to $\sigma^{2}$. This justifies the use of $F_{\stat}$, since its minimization  forces the localized residuals $R u$ to fall within the tight corridor $[\underline{\sigma}^{2}, \overline{\sigma}^{2}]$.

\subsection*{Our contribution}
The contribution of this paper is connected to the aforementioned literature on several levels.
Starting from an arbitrary $k$-th order, homogeneous, linear differential operator $\B$ between two finite dimensional Euclidean spaces $\mathbb{U}$ and $\mathbb{V}$, 
we introduce the general regularizer 
\begin{equation}\label{our_reg}
\mathcal R(u;\alpha) = \sum_{i=1}^{k-1}\int_{\Omega}\alpha_i(x)  f_i(x,\dif \D^i u)+\int_{\Omega}\alpha_k(x) f_k(x,\dif \B u),
\end{equation}
$\alpha_{i}\colon \Omega \to [0,+\infty)$ being for $i=1,\ldots,k$ the spatially dependent weights.
The functions $f_{i}$ are of linear growth and convex in the second variable.
We assume them to be Carath\'eodory integrands, or in other words,
they explicitly depend on the spatial variable $x$ in a measurable way.
More details about the setting are to be found in Section \ref{sec:setup}.
As a first contribution,
we prove lower semicontinuity of the functional in \eqref{our_reg} with respect to a suitable weak-$\ast$ convergence,
a necessary step towards the existence of solutions of the corresponding variational image reconstruction problem \eqref{eq:I=Phi+R}.
Secondly, we introduce and prove existence of solutions to the bilevel scheme,
which provides an optimal spatially dependent weight and an associated reconstructed image.

Not much work has been done for functionals depending on general differential operators.
One example comes from the recent preprint \cite{DFL19},
where a bilevel scheme for first order differential operators $\B$ is developed.
Interestingly, the authors identify classes of operators $\B$
such that the scheme outputs an optimal reconstructed image and an optimal $\B$ for the upper level problem.
However, in their analysis one always obtains $\bv$ minimizers.
In contrast, in our method the operator $\B$ is fixed,
but it is allowed to be \emph{arbitrary} (see Theorems \ref{thm:lsc_any_B} and \ref{thm:main}).

From the theoretical point of view,
one of the main advantages of our approach is the fact that
we can allow for \emph{spatially dependent weights}  and for \emph{general convex integrands} in the regularizers.
Our hypotheses on the convex integrands are optimal, due to the use of Young measures for oscillation and concentration, see Appendix \ref{sec:YM}. From an analytical point of view, our regularity assumptions on the weights are minimal, as can be seen from Section \ref{sec:why-Adm}. In the future, we aim to develop our theory to include optimization problems over linear PDE operators $\B$ that satisfy as few assumptions as possible.
We expect that our lower semicontinuity result, Theorem \ref{thm:lsc_any_B}, and the existence result for the bilevel schemes, Theorem \ref{thm:main} will serve as preliminary work in this direction.


We conclude the paper with a series of numerical examples
that deal with versions of the Huber TV and TV$^{2}$ in which  both the Huber and the regularization parameter are spatially dependent. We devise a strategy to prefix the former in a sensible way, while the latter is computed automatically by the bilevel scheme. Even though the main purpose of these numerical examples is to support the applicability and versatility of the framework,
we are able to draw two interesting conclusions.
The first one is that the bilevel weighted TV$^{2}$ in combination with the statistics-based upper level objective $F_{\stat}$ is able to produce high quality reconstructions, even outperforming TGV (both in its scalar and weighted versions). The second one is that the introduction of the spatially varying Huber parameter can further enhance the detailed areas in the reconstructed images.

\subsection*{Structure of the paper}

In Section \ref{sec:setup} we introduce the spaces of functions of bounded $\B$-variation, 
which provide the functional setting for our analysis. We then state the assumptions under which the general bilevel scheme is studied, and
we justify our choice of admissible weights.
In Section \ref{sec:LSC}, we prove lower semicontinuity and existence theorems concerning the lower level of the bilevel scheme,
while Section \ref{sec:bilevel} is devoted to the main existence result for optimal weights and reconstructed images.
Eventually, numerical experiments for the weighted Huber TV and TV$^2$ regularizers are performed in Section \ref{sec:num},
where we briefly describe the algorithmic set-up and present a series of numerical examples.

\subsection*{Acknowledgements}
The authors are grateful to Elisa Davoli for preliminary discussions at the early stage of the project.
VP acknowledges the supports of the Austrian Science Fund (FWF) through project I4052 and of the BMBWF through the OeAD-WTZ num. CZ04/2019.


\section{Mathematical setup}\label{sec:setup}
We collect in this section all the assumptions and the notations to be used in the sequel.
{We also include some heuristic motivation for the definition of the class of admissible weights.}

\subsection{Functional setting: $\bv^\B_p$ spaces.}
We work in the $n$-dimensional Euclidean space $\R^n$, $n\geq 2$,
that we endow with the Lebesgue measure $\mathscr L^n$.
We let $\Omega \subset \R^n$ be a fixed open and bounded set  {with Lipschitz boundary},
which stands as the image domain.
In typical applications $n=2$ and $\Omega$ is a rectangle.
We suppose that the image functions take values in a finite dimensional inner product space $\U$,
which, for instance, is $\R$ for grayscale images, $\R^3$ for RGB images, or it can be even more structured like e.g.\ $\mathcal{S}^{n\times n}$ for diffusion tensor imaging \cite{diff_tens}.
In order to describe further the functional setting in which our analysis is carried out,
we need to introduce the class of differential operators that we consider.

Let $\V$ be another finite dimensional inner product space and let $\lin (\U,\V)$ be the space of linear maps from $\U$ to $\V$.
Hereafter, for $k\in \mathbb{N}\setminus\{0\}$,
$\B$ denotes a $k$-th order, homogeneous and linear differential operator with constant coefficients.
Explicitly,
given $B_i\in \lin(\U,\V)$ for any $n$-dimensional multi-index $i = (i_1,\dots,i_n)\in \mathbb{N}^n$,
we define for a smooth function $u\colon\R^n\rightarrow \U$
    $$
    \B u\coloneqq \sum_{|i|=k} B_i \partial^i u.
    $$
(recall that $|i| \coloneqq \sum_j i_j$).
When $u$ is less regular, we interpret $\B u$ in the distributional sense.
In particular, we are interested in the case in which $\B u$ is a finite Radon measure.

Given a generic open set $O\subset \R^n$,
we recall that
a finite ($\U$-valued) Radon measure on $O$
is a measure on the $\sigma$-algebra of the Borel sets of $O$.
We denote the space of such measures by $\mathcal M(O,\mathbb U)$, and,
by means of the classical Riesz's representation theorem,
we can identify it as the dual of the space
\[
    \hold_0(O,\U)\coloneqq \big\{ u\colon O \to \U : \{|u|>\delta\} \text{ is relatively compact for all } \delta>0 \big\},
\]
equipped with the uniform norm.
The dual norm induced on $\mathcal M(O,\mathbb U)$ turns out to be the one associated with the total variation,
which we denote by $|\,\bigcdot\,|$.
We refer to \cite[Chapter 1]{AFP00} for further reading on measure theory.

In the case we consider,
given $\mu\in \mathcal M(\Omega,\mathbb U)$,
we have that $\B \mu \in \mathcal M(\Omega,\mathbb V)$ if and only if
there exists $\nu \in \mathcal M(\Omega,\mathbb V)$ such that
\begin{equation*}
    \langle \nu, \phi \rangle = \int_\Omega \B^\ast \phi \,\dif \mu
    \qquad \text{for all } \phi\in \ccinfty(\Omega,\V),
\end{equation*}
where  $\langle \,\bigcdot\, , \,\bigcdot\, \rangle$ denotes the duality pairing and
$\B^\ast$ is the formal adjoint of $\B$, i.e.
\[
    \B^\ast \phi \coloneqq  -  \sum_{|i|=k} B_i^\ast \partial^i \phi
    \qquad \text{for all }\phi\in\ccinfty(\R^n,\V),
\]
$B_i^\ast$ being the transpose of $B_i$.

It is convenient to have at our disposal
a specific notation for the spaces
that we are going to work with.
For $\Omega$, $\U$ and $\V$ as above, and for $p\in(1,+\infty)$,
we set
    \begin{gather*}
    \bv^\B_p(\Omega) \coloneqq\{u\in L^p(\Omega,\mathbb U)\colon \B u\in\mathcal M(\Omega,\mathbb V)\},
    \end{gather*}
and we abbreviate $\bv^\B(\Omega)\coloneqq \bv^\B_p(\Omega)$ when $p=1$.
The spaces above are naturally endowed with weak-$\ast$ notions of convergence, namely
\begin{gather*}
    u_j \wstar u \text{ in } \bv^\B_p(\Omega)
    \quad\text{if and only if}\quad
    u_j \wk u \text{ in } L^p(\Omega)\text{ and }
    \B u_j \wstar \B u\text{ in } \mathcal M(\Omega,\mathbb V).
\end{gather*}
Stronger convergences may be retrieved if the class of differential operators is restricted.
We give a brief account on this point in the following lines.
 
Due to the interaction between Fourier transform and linear PDE,
often analytic properties of $\bv^\B$ spaces
(and of the equation $\B u=v$ in general)
can be expressed in terms of algebraic properties of the characteristic polynomial.
We recall that the characteristic polynomial, or symbol, of $\B$ is
    $$
     \B (\xi)\coloneqq \sum_{|i|=k}\xi^i B_i\in\lin(\U,\V),
     \qquad \xi\in\C^n,
    $$
where $\xi^i\coloneqq \xi_1^{i_1}\cdots \xi_n^{i_n}$.
In our study, the following property will be particularly relevant:
    \begin{definition}[\cite{Sm,BrDiGm,GmRa_emb}]
    \label{def:C-ell}
        An operator $\B$ is said to be $\C$-elliptic if 
        $$
        \ker_\C\B(\xi)=\{0\}\quad\text{ for all }\xi\in\C^n\setminus\{0\}.
        $$
    \end{definition}
    It was shown in \cite{Sm} that $\C$-ellipticity is equivalent with full Sobolev regularity for the equation $\B u=v$ on domains, provided that $v\in L^p(\Omega,\V)$, $p\in(1,+\infty)$.
    For $p=1$ we have the counterpart:
    \begin{theorem}[\cite{GmRa_emb}]\label{thm:GR}
       Let $\Omega\subset\R^n$ be a Lipschitz domain. An operator $\B$ is $\C$-elliptic if and only if 
       $$
       \|u\|_{\sobo^{k-1,n/(n-1)}(\Omega)}\leq c\left(|\B u|(\Omega)+\|u\|_{L^{1}(\Omega)}\right)\quad\text{for }u\in\bv^\B(\Omega),
       $$
     where $| \B u |$ is the total variation measure associated with $\B u$.
    \end{theorem}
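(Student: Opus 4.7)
The statement is an equivalence, so I would prove the two implications separately. The nontrivial content is really in the forward direction, where C-ellipticity is leveraged analytically; the converse is a relatively standard scaling obstruction.

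\textbf{($\Leftarrow$) C-ellipticity implies the embedding.} I would proceed in three steps.

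\emph{Step 1: Strict approximation.} Given $u \in \bv^\B(\Omega)$, I would construct a sequence $u_j \in \ccinfty(\R^n, \U)$ such that $u_j \to u$ in $L^1(\Omega)$ and $\|\B u_j\|_{L^1(\R^n)} \to |\B u|(\Omega)$ up to a term controlled by $\|u\|_{L^1(\Omega)}$. This combines mollification of measures with an extension operator $E \colon \bv^\B(\Omega) \to \bv^\B(\R^n)$. The construction of $E$ is the step that really uses both the Lipschitz boundary of $\Omega$ and the C-ellipticity of $\B$: the latter furnishes a well-defined trace theory on $\partial\Omega$, so that extensions glued across the boundary do not create singular concentrations in the extended $\B u$.

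\emph{Step 2: A priori estimate on $\R^n$.} For $v \in \ccinfty(\R^n, \U)$, I would establish
\[
\|\D^{k-1} v\|_{L^{n/(n-1)}(\R^n)} \leq C \|\B v\|_{L^1(\R^n)}.
\]
Since C-ellipticity implies that $\B(\xi)$ admits a smooth left inverse on $\R^n \setminus \{0\}$ that is homogeneous of degree $-k$, one gets Fourier multipliers $m_\alpha(\xi)$ homogeneous of degree $-1$ representing $\partial^\alpha v$ for $|\alpha|=k-1$ in terms of $\B v$. The desired $L^1 \to L^{n/(n-1)}$ bound then follows from the Bourgain--Brezis / Van Schaftingen estimate for elliptic and canceling operators, both conditions being implied by C-ellipticity.

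\emph{Step 3: Conclusion.} Apply Step 2 to the approximants $u_j$ from Step 1 and pass to the limit via lower semicontinuity of $\|\cdot\|_{L^{n/(n-1)}}$ under weak convergence to bound $\|\D^{k-1} u\|_{L^{n/(n-1)}(\Omega)}$ by $|\B u|(\Omega) + \|u\|_{L^1(\Omega)}$. Control of the lower-order derivatives and of $\|u\|_{L^{n/(n-1)}(\Omega)}$ itself then follows by iterated Sobolev--Poincaré inequalities, absorbing the remainder into the $\|u\|_{L^1(\Omega)}$ term.

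\textbf{($\Rightarrow$) The embedding implies C-ellipticity.} I would argue by contradiction. Assuming $\B$ is not C-elliptic, pick $\xi_0 = \eta + i\tau \in \C^n \setminus \{0\}$ and $u_0 \in \U_\C \setminus \{0\}$ with $\B(\xi_0) u_0 = 0$, and test the inequality on localized plane waves $u_\lambda(x) = u_0 \, \phi_\lambda(x) \, e^{i\lambda \xi_0 \cdot x}$, with $\phi_\lambda$ a suitably rescaled cutoff concentrating near a point of $\Omega$. The identity $\B(\xi_0) u_0 = 0$ forces the leading $\lambda^k$-term in $\B u_\lambda$ to vanish, whereas $\D^{k-1} u_\lambda$ retains its full leading order $\lambda^{k-1}$. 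By tuning the scale of $\phi_\lambda$ to compensate for the exponential factor $e^{-\lambda \tau \cdot x}$ when $\tau \neq 0$, one checks that the $L^{n/(n-1)}$-norm of $\D^{k-1} u_\lambda$ eventually dominates $|\B u_\lambda|(\Omega) + \|u_\lambda\|_{L^1(\Omega)}$, contradicting the embedding.

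\textbf{Main obstacle.} The most delicate step is unquestionably the extension in Step 1. One needs an operator that preserves both the $L^1$-norm of $u$ and the total $\B$-variation up to boundary contributions that can be absorbed in $\|u\|_{L^1(\Omega)}$. This is exactly where the algebraic content of C-ellipticity, i.e. injectivity of the complex symbol, acquires genuine analytic weight, by underpinning a well-defined trace map on $\partial\Omega$. Everything else in the forward direction is essentially Fourier analysis and classical Sobolev-type estimates; the trace and extension theory for $\bv^\B$ under C-ellipticity is the real heart of the proof.
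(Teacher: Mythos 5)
First, note that the paper does not prove this statement at all: it is quoted verbatim from the cited reference \cite{GmRa_emb}, so your attempt must be measured against the known proof there rather than against anything in this manuscript. Your Step 2 is sound as a citation — $\C$-ellipticity implies ellipticity and cancellation, whence the Van Schaftingen estimate $\|\D^{k-1}v\|_{L^{n/(n-1)}(\R^n)}\leq C\|\B v\|_{L^1(\R^n)}$ for $v\in\ccinfty(\R^n,\U)$ — but your Step 1 is a genuine gap, not a technicality. An extension operator $E\colon\bv^\B(\Omega)\to\bv^\B(\R^n)$ with $|\B(Eu)|(\R^n)\lesssim|\B u|(\Omega)+\|u\|_{L^1(\Omega)}$ cannot be borrowed from classical $\sobo^{k,1}$/$\bv$ extension theory: by Ornstein's non-inequality, $\|\D^k u\|_{L^1}$ is \emph{not} controlled by $\|\B u\|_{L^1}$, so any extension must be tailored to $\B$, and building one is essentially as hard as the theorem itself. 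The trace theory you invoke to justify it is likewise a substantial later development, not an available black box. The actual proof works intrinsically on $\Omega$: $\C$-ellipticity forces (via Smith's theorem) the nullspace of $\B$ to be a finite-dimensional space of polynomials, one proves a local Poincar\'{e}-type inequality $\inf_{\pi\in\ker\B}\|u-\pi\|_{\sobo^{k-1,n/(n-1)}(B)}\lesssim|\B u|(B)$ on balls by combining the full-space estimate with representation formulas, and one then passes to Lipschitz domains by a covering/chaining argument with the polynomial projections — no extension operator appears.

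Your necessity argument also fails as written, for a quantitative reason. With $u_\lambda=u_0\,\phi_\lambda\,e^{\imag\lambda\xi_0\cdot x}$ and $\phi_\lambda$ a cutoff at scale $r$, the leading $\lambda^k$ term of $\B u_\lambda$ indeed cancels, but the commutator terms (one derivative on $\phi_\lambda$, $k-1$ on the exponential) give $\|\B u_\lambda\|_{L^1}\sim\lambda^{k-1}r^{-1}\cdot r^n=\lambda^{k-1}r^{n-1}$, which is \emph{exactly} the order of $\|\D^{k-1}u_\lambda\|_{L^{n/(n-1)}}\sim\lambda^{k-1}(r^n)^{(n-1)/n}$; the ratio stays bounded for every choice of $r(\lambda)$ and no contradiction results. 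The correct construction dispenses with the cutoff entirely: if $\B(\xi_0)u_0=0$, then every $u(x)=\mathrm{Re}\big(u_0\,h(\xi_0\cdot x)\big)$ with $h$ holomorphic lies exactly in the nullspace of $\B$ on $\Omega$, so the inequality collapses to $\|u\|_{\sobo^{k-1,n/(n-1)}(\Omega)}\leq c\|u\|_{L^1(\Omega)}$ on an infinite-dimensional space, and one violates it by choosing $h$ with a pole approaching the image of $\Omega$ under $x\mapsto\xi_0\cdot x$. This is unavoidable: purely oscillatory choices of $h$ only detect \emph{real} characteristic directions, hence only failure of ellipticity, whereas the point of the theorem is to detect the genuinely complex zeros that separate $\C$-ellipticity from ellipticity — the Cauchy–Riemann operator with $k=1$ is the standard example where your plane waves produce no contradiction but the singular holomorphic functions do.
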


\subsection{The bilevel scheme}
\label{sec:setup-bilevel}
We are now in a position to formulate our problem rigorously.
Let us fix $p\in(1,+\infty)$.
As we have touched upon in the introduction,
our goal is to provide an existence result for solutions to the following training scheme:
given $g\in L^p(\Omega,\U)$,
\begin{subequations}
	\begin{gather}
	\label{eq:T-L1}\tag{L1}
	\text{find } \alpha^\ast \in {\rm argmin} \left\{ F(u_\alpha) : \alpha \in \Adm \right\}\\
	\label{eq:T-L2}\tag{L2}
	\text{such that }u_\alpha\in {\rm argmin}\left\{I[u;\alpha]\colon u\in \bv_p^\B(\Omega)\right\},
	\end{gather}
\end{subequations}
where
\begin{equation}\label{eq:functional}
	I[u;\alpha] \coloneqq \Phi_g(u) 
	+ \int_{\Omega}\alpha(x) f(x,\dif \B u).
\end{equation}
 All the due definitions and assumptions are collected  below.

\begin{description}[itemsep=6pt,labelindent=\parindent,leftmargin=\parindent,font=\normalfont\itshape \space -- \space]
\item[Cost functional]
    As for the upper level problem \eqref{eq:T-L1},
    $F\colon L^p(\Omega,\U) \to \R$ is a proper, convex and weakly lower semicontinuous functional.
    Typical choices for this functional are the PSNR maximizing $F_{\PSNR}$ in \eqref{intro:ul_psnr}, which makes use of the ground truth $u_{\gt}$,
    and the statistics-based, ground truth-free $F_{\stat}$ in \eqref{intro:ul_stats}, in the spirit of supervised and unsupervised learning respectively.
\item[Fidelity term]
    The assumptions on the functional $\Phi_g\colon L^p(\Omega,\U) \to \R$ in \eqref{eq:functional} are similar to the ones on $F$,
    namely $\Phi_g$ is a proper, convex and weakly lower semicontinuous functional
    that is also coercive. This means that
    \[
        \lim_{j\to+\infty}\| u_j - g \|_{L^p(\Omega,\U)} = +\infty
        \quad\text{implies}\quad
        \lim_{j\to+\infty}\Phi_g(u_j) = +\infty.
    \]
    In particular,
    \[
        \Phi_g(u) = \| u_j - g \|^p_{L^p(\Omega,\U)}
    \]
    is a simple instance of fidelity term.
\item[Weights]
    Given $\underline \alpha,\overline \alpha\geq 0$ with $\underline{\alpha}<\overline{\alpha}$,
    the scalar fields $\alpha\in\hold(\bar{\Omega},[\underline \alpha,\overline \alpha])$ are supposed
    to share the same uniform modulus of continuity $\omega$,
    that is, an increasing function $\omega\colon[0,+\infty)\rightarrow[0,+\infty)$ such that $\omega(0)=0$.
    As a consequence, the class of admissible weights
    \begin{align}\label{eq:Adm}
        \Adm \coloneqq \left\{\alpha\in\hold(\bar{\Omega},[\underline \alpha,\overline \alpha]): |\alpha(x)-\alpha(y)|\leq\omega(|x-y|) \;\text{for every }x,y\in\overline{\Omega}\right\}
    \end{align}
    is compact with respect to the uniform norm by Arzel\`a--Ascoli theorem.
    We will motivate the definition of the set $\Adm$ below, see Subsection \ref{sec:why-Adm}.
\item[Integrand]
    The function $f\colon \Omega\times \V\rightarrow\R$ is a \emph{Carath\'eodory} integrand
    such that $z\mapsto f(x,z)$ is convex
    for $\mathscr{L}^n$-a.e. $x\in\Omega$. Here, {Carath\'eodory} integrand means jointly Borel measurable and continuous in the second variable.
    We also suppose that
    the integrand satisfies the linear coercivity and  growth bounds
    \begin{align}\label{eq:bounds}
        c( |\,\bigcdot\,|-1)\leq f(x,\,\bigcdot\,)\leq C(1+|\,\bigcdot\,|)\quad\text{for }\mathscr{L}^n\text{-a.e. }x\in\Omega,
    \end{align}
    for some $c,C\geq 0$.
\end{description}

In order to make sense of \eqref{eq:functional},
we are still left to define the applications of convex functions to measures,
as in the term $f(x,\dif \B u)$.
For an integrand $f\colon\Omega\times\V\rightarrow\R$ satisfying \eqref{eq:bounds},
we define the \emph{recession function}
    \begin{equation}\label{eq:recession}
    f^\infty(x,z)\coloneqq \lim_{(x',z',t)\rightarrow(x,z,+\infty)} \frac{f(x',tz')}{t}\quad\text{for }(x,z)\in\bar\Omega\times\V,    
    \end{equation}
which we assume to exist.
We then set for $\mu\in\mathcal M(\Omega,\V)$
    \begin{equation}\label{eq:action}
    \int_{\Omega}f(x,\dif \mu) \coloneqq\int_{\Omega}f\left(x,\dfrac{\dif \mu}{\dif\mathscr{L}^n}(x)\right)\dif x+\int_{\Omega}f^\infty\left(x,\dfrac{\dif\mu^s}{\dif|\mu|}(x)\right)\dif|\mu|(x),
    \end{equation}
where $\mu^s$ denotes the singular part of $\mu$ with respect to Lebesgue measure
and $\dif \mu / \dif \nu$ is the Radon-Nikod\'ym derivative of $\mu$ with respect to the measure $\nu$.

\subsection{Rationale for the definition of the set of admissible weights}\label{sec:why-Adm}
In order to highlight the main technical obstacles
that  are encountered in the analysis of bilevel training schemes with space-dependent weights,
we start with an example involving the weighted total variation,
which, in spite of its simplicity, exhibits the typical features of such class of problems.
The model we address has been already studied in \cite{HiRa17}
(with a different approach from the one we outline).

Let $\Omega\subset \R^n$ be a bounded open set with Lipschitz boundary.
For $p\in [1,\frac{n}{n-1})$,
we suppose that a training pair $(u_{\gt},g)\in L^2(\Omega,\R)\times L^p(\Omega,\R)$
is assigned,
where $u_\gt$ and $g$ encode respectively the ground truth and the corrupted datum.
We also fix two positive parameters $\underline \alpha$ and $\overline \alpha$,
and we provisionally allow the regularizing weights
to vary in $\mathrm{LSC}(\Omega,[\underline \alpha,\overline \alpha])$,
the space of lower semicontinuous functions on $\Omega$ with range in $[\underline \alpha,\overline \alpha]$.

For $u\in \bv(\Omega)$ and $\alpha\in \mathrm{LSC}(\Omega,\left[\underline \alpha,\overline \alpha\right])$,
we introduce the first order functional
\begin{equation}\label{eq:1st-order}
	J[u;\alpha]\coloneqq \int_\Omega |u-g|^p\dif x+\int_\Omega \alpha(x)\dif|\D u|(x),
\end{equation}
and the ensuing corresponding training scheme:
\begin{gather}
	\label{eq:T-L1-BV}
	\text{find } \alpha^\ast\in {\rm argmin}
	    \left\{F_{\PSNR}(u_\alpha)
	    : \alpha\in \mathrm{LSC}(\Omega,\left[\underline \alpha,\overline \alpha\right])
	    \right\}\\
	\label{eq:T-L2-BV}
	\text{such that } u_{\alpha}\in {\rm argmin}\left\{J[u;\alpha]\colon u\in \bv(\Omega)\right\}. 
\end{gather}

The functional in \eqref{eq:1st-order} is reminiscent of the one considered in \cite{AJNO17},
where, motivated by vortex density models,
the authors studied the property of minimizers,
i.e., of solutions to \eqref{eq:T-L2-BV}.

Before discussing the existence of solutions to the scheme \eqref{eq:T-L1-BV}--\eqref{eq:T-L2-BV} as a whole,
let us justify the choice of the class of weights in \eqref{eq:T-L1-BV}.
Note that
the definition of $J$ itself calls for some degree of regularity for $\alpha$.
Indeed, if in \eqref{eq:1st-order} $\alpha\colon \Omega \to [\underline \alpha,\overline \alpha]$ is a given function and $u$ is allowed to vary in $\bv(\Omega)$
(as it is the case of \eqref{eq:T-L2-BV}),
there might be choices of $u$ for which the coupling
\[ \int_\Omega \alpha(x)\dif|\D u|(x) \]
is not well-defined.
Prescribing lower semicontinuity for the admissible weights $\alpha$ allows to circumvent the issue,
{because lower semicontinuous functions are Borel measurable and $\D u\in \mathcal{M}(\Omega,\R^n)$ is a Borel measure.}
Besides, for any $\alpha\in \mathrm{LSC}(\Omega,\left[\underline \alpha,\overline \alpha\right])$ the existence of a solution $u_\alpha$ to \eqref{eq:T-L2-BV}
follows by the direct method of the calculus of variations.
Indeed, we firstly observe that
the coercivity of $J$ in $L^1$ is deduced
by the following standard result
(see e.g.\ \cite[Theorem 3.23]{AFP00}):
\begin{theorem}[Compactness in $\bv$]
    \label{stm:compactBV}
    Let $\Omega \subset \R^n$ be a bounded Lipschitz domain and
    let $(u_j)_j$ be a bounded sequence in $\bv(\Omega)$.
    Then, there exist $u\in\bv(\Omega)$ and a subsequence $(j_k)_k$ such that
    $(u_{j_k})$ weakly-$\ast$ converges to $u$, that is,
    $u_{j_k}\to u$ in $L^1(\Omega)$ and
    \begin{align*}
        \lim_{k\to+\infty} \int_{\Omega} \phi\, \dif \D u_{j_k} = \int_{\Omega} \phi \,\dif \D u
        \qquad\text{for all } \phi\in \hold_0(\Omega).
    \end{align*}
\end{theorem}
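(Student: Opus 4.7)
The plan is to derive the statement from two classical ingredients: the compact embedding of $\bv(\Omega)$ into $L^1(\Omega)$ on bounded Lipschitz domains, and the sequential weak-$\ast$ compactness of bounded sets in $\mathcal M(\Omega,\R^n)$ granted by Banach--Alaoglu once $\mathcal M(\Omega,\R^n)$ is identified with the dual of the separable space $\hold_0(\Omega,\R^n)$.

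First I would record that by hypothesis the quantity $\|u_j\|_{L^1(\Omega)} + |\D u_j|(\Omega)$ is uniformly bounded. Invoking the $\bv$--$L^1$ compact embedding on the bounded Lipschitz domain $\Omega$, I extract a subsequence, not relabeled, and a function $u\in L^1(\Omega)$ with $u_j\to u$ in $L^1(\Omega)$. Next, since $(\D u_j)_j$ is bounded in $\mathcal M(\Omega,\R^n)$ and the predual $\hold_0(\Omega,\R^n)$ is separable, Banach--Alaoglu yields a further subsequence $(j_k)_k$ and a measure $\mu\in \mathcal M(\Omega,\R^n)$ such that $\D u_{j_k} \wstar \mu$, i.e.\ $\int_\Omega \phi\,\dif \D u_{j_k} \to \int_\Omega \phi\,\dif \mu$ for every $\phi\in \hold_0(\Omega,\R^n)$.

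It remains to identify $\mu$ with the distributional derivative of $u$. For $\phi \in \ccinfty(\Omega,\R^n)$ the integration-by-parts formula gives
\[
\int_\Omega \phi\,\dif \D u_{j_k} = -\int_\Omega u_{j_k}\,\di\phi\,\dif x,
\]
and the $L^1$-convergence of $u_{j_k}$ to $u$ together with the boundedness of $\di\phi$ lets me pass to the limit on the right, while the weak-$\ast$ convergence handles the left. Consequently $\int_\Omega \phi\,\dif \mu = -\int_\Omega u\,\di\phi\,\dif x$ for all test fields $\phi$, meaning that $\D u = \mu$ as distributions. Hence $\D u \in \mathcal M(\Omega,\R^n)$, so $u\in \bv(\Omega)$, and the weak-$\ast$ convergence of $\D u_{j_k}$ to $\D u$ against $\hold_0(\Omega,\R^n)$ is precisely what was announced.

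The single nontrivial input is the compact embedding $\bv(\Omega)\hookrightarrow L^1(\Omega)$ on bounded Lipschitz domains; this is the real substance of the statement and would be the main obstacle if one had to prove it from scratch. The standard route is to approximate $\bv$ functions by smooth ones via mollification (using the Lipschitz boundary to extend across $\partial\Omega$) and then apply the Rellich--Kondrachov theorem to the mollified sequence, combined with a uniform $L^1$ estimate controlling the error of approximation by the total variation. Since this embedding is quoted in \cite[Theorem~3.23]{AFP00}, I would cite it directly rather than reproduce the argument.
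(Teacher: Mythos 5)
Your argument is correct and is the standard textbook proof: compact embedding $\bv(\Omega)\hookrightarrow L^1(\Omega)$ for the strong $L^1$ limit, Banach--Alaoglu in $\mathcal M(\Omega,\R^n)=\hold_0(\Omega,\R^n)^\ast$ for the derivatives, and integration by parts against $\ccinfty$ test fields to identify the limit measure with $\D u$. The paper does not prove this statement at all --- it quotes it directly from \cite[Theorem 3.23]{AFP00} --- so there is nothing to compare beyond noting that your reduction to the compact embedding (which you correctly flag as the genuine content and also cite rather than prove) is exactly the route taken in that reference.
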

Secondly, we notice that
$J[\,\bigcdot\,;\alpha]$ is lower semicontinuous with respect to the $L^1$-convergence,
because,
when $\alpha\in \mathrm{LSC}(\Omega,\left[\underline \alpha,\overline \alpha\right])$,
general lower semicontinuity results in $\bv$ may be invoked
(see e.g.\ \cite{FoLe01}; and also \cite{AmDeFu08} for lower semicontinuity and relaxation results with $\bv$ integrands).

Once we know that
for lower semicontinuous weights, solutions to \eqref{eq:T-L2-BV} exist,
we can try to tackle the complete scheme.
So, let $(\alpha_j)_j \subset \mathrm{LSC}(\Omega,\left[\underline \alpha,\overline \alpha\right])$  be a minimizing sequence for \eqref{eq:T-L1-BV}.
Then, by definition, the integrals
	\[
		F_{\PSNR}(u_j)=\| u_j-u_\gt \|^2_{L^2(\Omega)}
		\qquad\text{with } u_j \coloneqq u_{\alpha_j}
	\]
converge, and we deduce that
$(u_j)_j$ is a bounded sequence in $L^2(\Omega)$.
Denote by $u \in L^2(\Omega)$ the weak $L^2$-limit of (a subsequence of) $(u_j)_j$.
By lower semicontinuity of the $L^2$-norm,
we obtain
\[
	F_{\PSNR}(u) \leq \inf \left\{ F_{\PSNR}(u_j) : \alpha\in \mathrm{LSC}(\Omega,\left[\underline \alpha,\overline \alpha\right]) \right\}.
\]
If we manage to show that $u=u_{\alpha^\ast}$ for some admissible $\alpha^\ast$,
then the latter is a solution to \eqref{eq:T-L1-BV}.
The natural choice for $\alpha^\ast$ would be the weak-$\ast$ limit of $(\alpha_j)_j$ in $L^\infty(\Omega)$, which can fail in general to have any lower semicontinuous representative.
On the positive side,
$J[u;\,\bigcdot\,]$ is continuous with respect to a suitable weak-$\ast$ convergence.
Indeed, if $(\alpha_j)_j \subset \mathrm{LSC}(\Omega,\left[\underline \alpha,\overline \alpha\right])$ is bounded and $u\in \bv(\Omega)$,
then there exist a subsequence, which we do not relabel, 
and $\alpha^\ast \in L^\infty(\Omega,[\underline \alpha,\overline \alpha];|\D u|)$ such that
    \begin{equation*}
	\lim_{j\to +\infty} \int_{\Omega} \alpha_j(x) \phi(x)  \dif|\D u|(x)  = \int_{\Omega} \alpha^\ast(x)  \phi(x)  \dif|\D u|(x)
		\qquad\text{for all }\phi\in L^1(\Omega;|\D u|).
	\end{equation*}
	In particular,
	\begin{equation}
	    \label{rmk:weakstar-cont}
		\lim_{j\to +\infty} J[u;\alpha_j]  = J[u;\alpha^\ast].
	\end{equation}

The previous lines suggest that
what is missing to solve the scheme \eqref{eq:T-L1-BV}-\eqref{eq:T-L2-BV} is a compactness property for the class of admissible weights.
This leads us to reduce ourselves to the problem
\begin{gather}
    \label{eq:T-L1-BV*}
    \begin{split}
	\text{find } \alpha^\ast\in {\rm argmin}\left\{F_{\PSNR}(u_\alpha) : \alpha\in \mathrm{Adm} \right\}\\
	\nonumber
    \text{ with }u_{\alpha}\in {\rm argmin}\left\{J[u;\alpha]\colon u\in \bv(\Omega)\right\}, 
    \end{split}
\end{gather}
where we assume \textit{a priori} that $\mathrm{Adm}\subset \hold(\bar\Omega,\left[\underline \alpha,\overline \alpha\right])$ is compact with respect to the uniform convergence.
Under the compactness assumptions on the class of admissible weights,
if $(\alpha_j)_j \subset \mathrm{Adm}$ is a minimizing sequence for \eqref{eq:T-L1-BV*},
and if $(u_j)_j$ and $u$ are constructed as above,
we are actually able to prove that $u = u_{\alpha^\ast}$,
where $\alpha^\ast\in \mathrm{Adm}$ is the uniform limit of $(\alpha_j)$.
In other words, the couple $(\alpha^\ast,u)$ is a solution to the scheme consisting of \eqref{eq:T-L1-BV*}--\eqref{eq:T-L2-BV}.

To prove the claim, we need to show that
\begin{equation}\label{eq:umin0}
J[u;\alpha^\ast] \leq J[v;\alpha^\ast]
\qquad\text{for any } v\in\bv(\Omega).
\end{equation}
We start from observing that the definition of $u_j$ grants
\[
	J[u_j;\alpha_j] \leq J[v;\alpha_j]
\qquad \text{for any } v\in\bv(\Omega),
\]
and hence, for any $v\in \bv(\Omega)$,
\begin{equation}\label{eq:finiteliminf0}
	\liminf_{j\to+\infty} J[u_j;\alpha_j]
	\leq \liminf_{j\to+\infty} J[v;\alpha_j]
	= J[v;\alpha^\ast],
\end{equation}
where the equality follows by \eqref{rmk:weakstar-cont}.
In particular, 
\[
	\liminf_{j\to+\infty} J[u_j;\alpha_j] <+\infty.
\]
Then, the uniform lower bound $ \alpha_j \geq \underline \alpha$ and Theorem \ref{stm:compactBV} yield that
$(u_j)$ converges weakly-$\ast$ in $\bv(\Omega)$ 
(again upon extraction of subsequences)
to a limit function which is necessarily $u$.
We thereby infer
\[
	u \in \bv(\Omega)\cap L^2(\Omega).
\]
From the uniform convergence of $(\alpha_j)$ and the weak-$\ast$ convergence of $(u_j)$ we obtain
\begin{equation}\label{eq:lim-uj-alphaj0}
	J[u;\alpha^\ast] \leq \liminf_{j\to+\infty} J[u_j;\alpha^{(j)}].
\end{equation}
On the whole, owing to \eqref{eq:finiteliminf0}, we deduce \eqref{eq:umin0}.

\begin{remark}
    In the absence of compactness for the set $\mathrm{Adm}$ under uniform convergence,
    the analysis becomes more delicate.
    We outline here some of the issues.
    
    Keeping in force the notation above,
    let $u\in \bv(\Omega)$ be the weak-$\ast$ limit of $(u_j)$
    and let $\alpha^\ast \in L^\infty(\Omega,[\underline \alpha,\overline \alpha];|\D u|)$ be the weak-$\ast$ limit of $(\alpha_j)$.
    Proving the optimality of $u$, i.e. $u=u_{\alpha^\ast}$, means
    \[
	J[u;\alpha^\ast] \leq J[v;\alpha^\ast]
	\qquad\text{for any } v\in \bv(\Omega).
    \]
    However, the right-hand side might be not well-defined.
    Intuitively, the point is that
    an ideal class of weights should be \textit{a priori} ``sufficiently compact'',
    and at the same it should give rise to ``well-behaved'' weighted $\bv$ functions.
    
    Another passage that is needed in the proof of existence (cf. \eqref{eq:2}, \eqref{eq:3})
    is the following semicontinuity inequality:
    \begin{equation*}
	J[u;\alpha^\ast] \leq \liminf_{j\to+\infty} J[u_j;\alpha^{(j)}].
    \end{equation*}
    Knowing that $(u_j)$ weakly-$\ast$ converges to $u$,
    its validity is undermined
    if only weak-$\ast$ convergence is available for the weights.
\end{remark}

\section{Existence theorems for the lower level problems}\label{sec:LSC}
We begin with a general lower semicontinuity result for convex integrands with rough $x$-dependence:

\begin{proposition}\label{prop:lsc}
Let $f\colon\Omega\times\V\rightarrow[0,+\infty)$ be a Borel measurable integrand such that $f(x,\,\bigcdot\,)$ is convex for almost every $x\in\Omega$.
Suppose that
the recession function $f^\infty$ in \eqref{eq:recession} exists for all $(x,z)\in\bar\Omega\times\V$.
Then 
$$
\mu_j\wstar \mu \text{ in }\mathcal M(\Omega,\V)\implies \liminf_{j\rightarrow\infty}\int_{\Omega}f(x,\dif\mu_j(x))\geq \int_{\Omega}f(x,\dif\mu(x)).
$$
\end{proposition}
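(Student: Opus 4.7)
The plan is to reduce the statement to a Reshetnyak-type lower semicontinuity principle for positively $1$-homogeneous convex integrands via the classical perspective construction.

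First, I would lift $\mu_j$ to the augmented measure $\tilde\mu_j\coloneqq(\mu_j,\mathscr{L}^n\mres\Omega)\in\mathcal M(\Omega,\V\times\R)$; since $\mu_j\wstar\mu$ and the scalar component is fixed, the weak-$\ast$ convergence lifts to $\tilde\mu_j\wstar\tilde\mu\coloneqq(\mu,\mathscr{L}^n\mres\Omega)$. Second, I would introduce the perspective function
\[
\tilde f(x,z,t)\coloneqq\begin{cases}t\,f(x,z/t),&t>0,\\ f^\infty(x,z),&t=0,\end{cases}
\]
which is convex and positively $1$-homogeneous in $(z,t)$ for a.e.\ $x$, and whose joint lower semicontinuity on $\bar\Omega\times\V\times[0,+\infty)$ is delivered by the hypothesis that the joint limit in \eqref{eq:recession} exists.

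The $1$-homogeneity of $\tilde f$ then yields the identity
\[
\int_\Omega f(x,\dif\mu_j)=\int_\Omega\tilde f\left(x,\dfrac{\dif\tilde\mu_j}{\dif|\tilde\mu_j|}(x)\right)\dif|\tilde\mu_j|(x),
\]
and analogously for the limit $\mu$: this is verified by splitting $\tilde\mu_j$ into its Lebesgue part, whose density with respect to $\mathscr L^n$ is $(\dif\mu_j/\dif\mathscr{L}^n,1)$, and its singular part $(\dif\mu_j^s/\dif|\mu_j^s|,0)\,\dif|\mu_j^s|$, and matching the two summands with the corresponding ones in \eqref{eq:action}. Consequently, the proposition reduces to the lower semicontinuity of the $1$-homogeneous convex functional $\tilde\mu\mapsto\int_\Omega\tilde f(x,\dif\tilde\mu/\dif|\tilde\mu|)\,\dif|\tilde\mu|$ along $\tilde\mu_j\wstar\tilde\mu$.

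To establish the last inequality, I would represent $\tilde f$ as the pointwise supremum of a countable family of continuous linear minorants, $\tilde f(x,z,t)=\sup_i\bigl(\varphi_i(x)\cdot z+\psi_i(x)\,t\bigr)$ with $\varphi_i\in\hold_0(\Omega,\V)$ and $\psi_i\in\hold_0(\Omega,\R)$. For each fixed $i$, the weak-$\ast$ convergence of $\tilde\mu_j$ and the continuity of $\varphi_i,\psi_i$ give
\[
\liminf_{j\to\infty}\int_\Omega\tilde f(x,\dif\tilde\mu_j)\geq\lim_{j\to\infty}\left(\int_\Omega\varphi_i\cdot\dif\mu_j+\int_\Omega\psi_i\,\dif x\right)=\int_\Omega\varphi_i\cdot\dif\mu+\int_\Omega\psi_i\,\dif x,
\]
and a monotone-convergence argument on the partial maxima $\max_{i\leq N}(\varphi_i\cdot z+\psi_i\,t)\uparrow\tilde f$ delivers the bound. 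The hard part will be producing this continuous family from the merely Borel $x$-dependence of $f$: for each fixed $x$, convexity and $1$-homogeneity identify $\tilde f(x,\bigcdot)$ with the support function of some closed convex set $K(x)\subset\V\times\R$, and one needs to combine a measurable selection of Borel generators of $K(x)$ with a Scorza-Dragoni-type regularization on a countable exhaustion of $\Omega$ by compact sets, the joint lower semicontinuity of $\tilde f$ ensuring that the approximation from below recovers $\tilde f$ in the limit.
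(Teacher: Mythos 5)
Your route (augment $\mu_j$ with $\mathscr L^n$, pass to the perspective function $\tilde f$, and run the Goffman--Serrin/Reshetnyak argument with continuous linear minorants) is genuinely different from the paper's proof, which works with a generalized Young measure generated by $(\mu_j)$, invokes Proposition \ref{prop:lsc_YM}, and concludes via Jensen's inequality, the elementary bound $g(z)+tg^\infty(w)\geq g(z+tw)$, and the barycentre identity of Lemma \ref{lem:barycentre}. However, your argument has a genuine gap at its foundation: the assertion that the existence of the joint limit in \eqref{eq:recession} delivers joint lower semicontinuity of $\tilde f$ on $\bar\Omega\times\V\times[0,+\infty)$ is false. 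That hypothesis controls $f(x,z)$ only for large $|z|$ (equivalently, $\tilde f$ near $t=0$) and yields continuity of $f^\infty$; at $t=1$, joint lower semicontinuity of $\tilde f$ is precisely joint lower semicontinuity of $f$, which Borel measurability in $x$ does not give. Concretely, $f(x,z)=a(x)+|z|$ with $a\colon\Omega\to[0,1]$ Borel and nowhere lower semicontinuous satisfies every hypothesis of the proposition (with $f^\infty(x,z)=|z|$), yet $\tilde f(x,z,t)=t\,a(x)+|z|$ is not lower semicontinuous in $x$. Since any countable supremum $\sup_i\bigl(\varphi_i(x)\cdot z+\psi_i(x)\,t\bigr)$ with continuous coefficients is automatically jointly lower semicontinuous, the representation your argument hinges on cannot exist; and the Scorza--Dragoni patch you sketch does not repair this, because on the exceptional set $\Omega\setminus K_\varepsilon$ the minorants must be discarded while $\mu$ and the $\mu_j$ may still carry mass there, so recovering $\int_{\Omega\setminus K_\varepsilon}f(x,\mu^a)\,\dif x$ and the singular contribution requires a separate (and unaddressed) equi-integrability and concentration analysis. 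This is exactly the difficulty that the Young-measure route is designed to absorb.

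A second, more routine omission: even granting the representation, from $\liminf_j\int_\Omega\tilde f(x,\dif\tilde\mu_j)\geq\int_\Omega\varphi_i\cdot\dif\mu+\int_\Omega\psi_i\,\dif x$ for each fixed $i$ you may only conclude a bound by the supremum over $i$ of the right-hand sides, and the supremum of the integrals is in general strictly smaller than the integral of the supremum. Passing to $\int_\Omega\sup_i(\varphi_i\cdot z+\psi_i t)\,\dif\tilde\mu$ requires the standard localization step: superadditivity of $A\mapsto\liminf_j\int_A\tilde f(x,\dif\tilde\mu_j)$ on disjoint open sets combined with a De Giorgi--Letta type lemma (cf.\ \cite{AFP00}), in which different minorants are activated on different pieces of a partition of $\Omega$. ``Monotone convergence on the partial maxima'' alone does not substitute for this, because $\int\max_{i\leq N}(\cdots)\,\dif\tilde\mu_j$ is not controlled by weak-$\ast$ convergence of $\tilde\mu_j$.
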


Note that the restriction on the existence of the recession function of $f$ implies both the joint continuity of $f^\infty$ and the linear growth of $f$ from above.

\begin{proof}
We consider a Young measure $\ym$ generated by $(\mu_j)_j$. By Proposition \ref{prop:lsc_YM} and Jensen's inequality, we have that
\begin{align*}
\liminf_{j\rightarrow\infty} \int_{\Omega}f(x,\dif\mu_j(x))&\geq \int_{\Omega}\langle\nu_x,f(x,\,\bigcdot\,) \rangle\dif x+\int_{\bar\Omega} \langle\nu_x^\infty,f^\infty(x,\,\bigcdot\,) \rangle\dif\lambda\\
&\geq\int_{\Omega}f(x,\bar\nu_x) \dif x+\int_{\bar\Omega} f^\infty(x,\bar\nu_x^\infty) \dif\lambda\\
&= \int_{\Omega} \big[ f(x,\bar\nu_x)+\lambda^a(x)f^\infty(x,\bar\nu_x^\infty) \big]\dif x+\int_{\bar\Omega} f^\infty(x,\bar\nu_x^\infty) \dif\lambda^s
\end{align*}
Using the inequality $g(z)+tg^\infty(w)\geq g(z+tw)$
which holds for any convex function $g$ and $t\geq0$,
we have
\begin{align*}
\liminf_{j\rightarrow\infty} \int_{\Omega}f(x,\dif\mu_j(x))
&\geq\int_{\Omega}f(x,\bar\nu_x+\lambda^a(x)\bar\nu_x^\infty) \dif x+\int_{\bar\Omega} f^\infty(x,\bar\nu_x^\infty) \dif\lambda^s\\
&\geq \int_{\Omega}f(x,\mu^a(x)) \dif x+\int_{\Omega} f^\infty(x,\dif\mu^s) \\
&=\int_{\Omega}f(x,\dif\mu).
\end{align*}
where the inequality follows from the nonnegativity of $f$ and Lemma \ref{lem:barycentre}, which implies that $\mu=((\bar\nu_x+\mu^a(x))\mathscr L^n+\bar\nu_x^\infty\lambda^s)\mres\Omega $.
\end{proof}
\begin{remark}
As it can be seen from the proof of Proposition \ref{prop:lsc},
we can allow for signed integrands, as long as we consider $\mu_j\wstar\mu$ in $\mathcal{M}(\bar\Omega,\V)$. In this case, the recession term in \eqref{eq:action} should be an integral over $\bar\Omega$. 
\end{remark}

We next prove two general existence results for convex integrals defined on $\bv^\B_p$ spaces. 
The first one holds for \emph{arbitrary} operators $\B$.

\begin{theorem}\label{thm:lsc_any_B}
    Let us fix $p\in(1,+\infty)$, $g\in L^p(\Omega,\U)$, $\underline \alpha>0$ and $\alpha\in\hold(\bar\Omega, [\underline \alpha,+\infty))$.
    Let $f\colon\Omega\times\V\rightarrow[0,+\infty)$ be an integrand satisfying the assumptions outlined in Subsection \ref{sec:setup-bilevel},
    and suppose further that
    the recession function $f^\infty$ in \eqref{eq:recession} exists for all $(x,z)\in\bar\Omega\times\V$.
Then, the functional in \eqref{eq:functional}
is weakly-$\ast$ lower semicontinous and admits a minimizer $u\in\bv^\B_p(\Omega)$.
If the fidelity term is strictly convex,
then the minimizer is unique.
\end{theorem}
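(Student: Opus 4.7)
The plan is to carry out a direct-method argument, with the lower semicontinuity step supplied by the already-established Proposition \ref{prop:lsc} applied to the weighted integrand.

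\textbf{Step 1: Lower semicontinuity.} First, I would reduce the semicontinuity of the regularizer to Proposition \ref{prop:lsc} by setting $\tilde f(x,z) \coloneqq \alpha(x) f(x,z)$. Since $\alpha\in \hold(\bar\Omega,[\underline\alpha,+\infty))$ with $\underline\alpha>0$ and $f$ is a nonnegative Carath\'eodory integrand that is convex in $z$, the same is true of $\tilde f$; moreover, because $\alpha$ is continuous at every $x\in\bar\Omega$, the recession function satisfies
\[
    \tilde f^\infty(x,z) = \lim_{(x',z',t)\to(x,z,+\infty)}\frac{\alpha(x')f(x',tz')}{t} = \alpha(x)\, f^\infty(x,z),
\]
which exists for all $(x,z)\in\bar\Omega\times\V$ under the hypothesis on $f^\infty$. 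Proposition \ref{prop:lsc} then yields that $\mu\mapsto \int_\Omega \alpha(x) f(x,\dif\mu)$ is weakly-$\ast$ lower semicontinuous on $\mathcal M(\Omega,\V)$. Combined with the assumed weak $L^p$-lower semicontinuity of $\Phi_g$, this gives weak-$\ast$ lower semicontinuity of $I[\,\bigcdot\,;\alpha]$ on $\bv^\B_p(\Omega)$, since the weak-$\ast$ convergence on $\bv^\B_p(\Omega)$ decouples into weak $L^p$-convergence of $u_j$ and weak-$\ast$ convergence of $\B u_j$.

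\textbf{Step 2: Coercivity and extraction.} Let $(u_j)\subset \bv^\B_p(\Omega)$ be a minimizing sequence, so that $\sup_j I[u_j;\alpha]<+\infty$. Coercivity of $\Phi_g$ on $L^p$ yields $\sup_j \|u_j\|_{L^p(\Omega,\U)}<+\infty$. For the regularizer, the lower bound $f(x,z)\geq c(|z|-1)$ together with $\alpha\geq\underline\alpha>0$ gives
\[
    \int_\Omega \alpha(x)\, f(x,\dif \B u_j) \geq c\underline\alpha\bigl(|\B u_j|(\Omega)-\mathscr L^n(\Omega)\bigr),
\]
so $\sup_j |\B u_j|(\Omega)<+\infty$. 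By reflexivity of $L^p$ and by the weak-$\ast$ compactness of bounded sets in $\mathcal M(\Omega,\V)$, I can pass to a (non-relabeled) subsequence with $u_j \wk u$ in $L^p(\Omega,\U)$ and $\B u_j \wstar \nu$ in $\mathcal M(\Omega,\V)$. Testing against $\ccinfty(\Omega,\V)$ and using that weak $L^p$-convergence implies distributional convergence, the distributional identity $\B u_j \to \B u$ forces $\nu=\B u$; hence $u\in \bv^\B_p(\Omega)$ and $u_j\wstar u$ in $\bv^\B_p(\Omega)$.

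\textbf{Step 3: Conclusion and uniqueness.} Applying the lower semicontinuity from Step 1 along this subsequence shows $I[u;\alpha]\leq \liminf_j I[u_j;\alpha] = \inf I[\,\bigcdot\,;\alpha]$, so $u$ is a minimizer. If $\Phi_g$ is strictly convex, then since the regularizer is convex in $u$ (as the integral of a convex function of the measure $\B u$, which is a linear function of $u$), the sum $I[\,\bigcdot\,;\alpha]$ is strictly convex on $\bv^\B_p(\Omega)$, whence uniqueness follows from a standard convexity argument.

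The only genuinely non-routine point is Step 1, but this is handled entirely by reducing to Proposition \ref{prop:lsc}; the remaining obstacle is the mild verification that the weighted integrand inherits the hypotheses of that proposition, in particular the existence of its recession function, for which continuity of $\alpha$ is precisely what is needed.
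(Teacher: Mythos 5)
Your proof is correct and follows essentially the same route as the paper: the direct method, with the lower semicontinuity of the regularizing term supplied by Proposition \ref{prop:lsc} and the compactness by the coercivity of $\Phi_g$ together with the lower bound in \eqref{eq:bounds}. The only difference is that you explicitly absorb the weight into the integrand and check that $\tilde f=\alpha f$ inherits the hypotheses of Proposition \ref{prop:lsc} (in particular $\tilde f^\infty=\alpha f^\infty$ via continuity of $\alpha$), whereas the paper simply asserts that one may assume $\alpha\equiv 1$ without loss of generality; your version makes that reduction precise.
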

\begin{proof}
There is no loss of generality in assuming that $\alpha\equiv 1$.
We will employ the direct method in the calculus of variations.
Since $I$ is bounded from below,
there exists a minimizing sequence $(u_j)_j\subset \bv^\B_p(\Omega)$
such that the limit of $I[u_j;\alpha]$ as $j\to+\infty$ is finite.
Since $\Phi_g$ is coercive on $L^p(\Omega,\mathbb U$ and $f$ satisfies the growth condition in \eqref{eq:bounds},
$(u_j)_j$ must be bounded in $\bv^\B_p(\Omega)$.
Thus, on a subsequence that we do not relabel,
we have $u_j\rightharpoonup u$ in $L^p(\Omega,\U)$ and $\B u_j\wstar\B u$ in $\mathcal M(\Omega,\V)$.
By the weak lower semicontinuity of the fidelity term
we have that
\begin{align}\label{eq:0order}
    \Phi_g(u)\leq \liminf_{j\to+\infty} \Phi_g(u_j),
\end{align}
whereas by Proposition \ref{prop:lsc} we obtain
\begin{align}\label{eq:korder}
    \int_{\Omega} f(x,\dif \B u)\leq \liminf_{j\rightarrow\infty}  \int_{\Omega} f(x,\dif \B u_j).
\end{align}
On the whole, we deduce
$$
    I[u;\alpha]\leq \liminf_{j\to+\infty} I[u_j;\alpha],
$$
and we conclude that $u\in\bv^\B_p(\Omega)$ is a minimizer of $I$.

Uniqueness follows easily when $\Phi_g$ is strictly convex.
Indeed, let $u^1,\,u^2$ be distinct minimizers and $u^0\coloneqq (u^1+u^2)/2$. Then, $2\Phi_g(u^0)<\Phi_g(u^1)+\Phi_g(u^2)$,
while the convexity of the second term gives
$$
2\int_{\Omega} f(x,\dif \B u^0)\leq \int_{\Omega} f(x,\dif \B u^1)+\int_{\Omega} f(x,\dif \B u^2).
$$
By adding the last two inequalities
we infer $I[u^0;\alpha]<\min I$, a contradiction.
\end{proof}

\begin{remark}
Notably, in the previous theorem uniqueness holds for $\Phi_g(u)=\| u - g \|^p_{L^p(\Omega)}$.
Indeed, for instance by the uniform convexity of the $L^p$ spaces, we have for $u^0\coloneqq (u^1+u^2)/2$ 
$$
2\int_\Omega |u^0-g|^p\dif x<\int_\Omega |u^1-g|^p\dif x+\int_\Omega |u^2-g|^p\dif x.
$$
\end{remark}

\begin{remark}
    There is no immediate counterpart of Theorem \ref{thm:lsc_any_B} when $p=1$,
    because in general bounded sequences in $\bv^\B$ are not weakly-$\ast$ precompact.
    One possibility would be to embed $\bv^\B$
    in the larger space of measures $\{\mu \in \mathcal{M}(\Omega,\U) : \B \mu \in \mathcal{M}(\Omega,\V)\}$.
    A second option is to assume $\B$ to be $\C$-elliptic,
    as we do below.
\end{remark}

The second existence result involves the smaller class of $\C$-elliptic operators,
which was introduced in Definition \ref{def:C-ell}.
In this case, we are able to treat regularizers that also involve lower order terms, see \eqref{our_reg},
and we can obtain much more precise information on the minimizers.
We make the unconventional convention that $\frac{n}{n-k}=+\infty$ if $k\geq n$, and
we denote by $\sym^i(\R^n,\U)$ the space of symmetric $\U$-valued $i$-linear maps on $\R^n$.

\begin{theorem}\label{thm:lsc_C-ell}
    Let us fix $p\in[1,\frac{n}{n-k})$, $g\in L^p(\Omega,\U)$, $\underline \alpha>0$,
    $\alpha_i\in\hold(\bar\Omega, [0,+\infty))$ for $i=1,\ldots,k-1$
    and $\alpha_k\in\hold(\bar\Omega, [\underline \alpha,+\infty))$.
    Let $f_i\colon\Omega\times\sym^i(\R^n,\U)\rightarrow\R$, $i=1,\ldots,k-1$, and $f_k\colon \Omega\times\V\rightarrow[0,+\infty)$ be Carath\'eodory integrands
    such that for all $i$
    $f_i(x,\,\bigcdot\,)$ is convex and $f_i(x,\,\bigcdot\,)\leq C(1+|\,\bigcdot\,|)$ for almost every $x\in\Omega$.
    Suppose in addition that
    $f_k$ satisfies the coercivity bound $c(|\,\bigcdot\,|-1)\leq f_k(x,z)$ and that
    $$
    f_k^\infty(x,z)\coloneqq \lim_{(x',z',t)\rightarrow(x,z,+\infty)}\frac{f_k(x',tz')}{t}
    $$
    exists for all $(x,z)\in\bar\Omega\times\V$.
    Then, if $\B$ is $\C$-elliptic, the functional 
    \begin{align}\label{eq:Itilde}
	    \tilde I[u;\alpha] \coloneqq \Phi_g(u) +\sum_{i=1}^{k-1}\int_{\Omega} \alpha_i(x) f_i(x, \nabla^i u(x))\dif x
	+ \int_{\Omega}\alpha_k(x) f_k(x,\dif \B u).
    \end{align}
    is weakly-$\ast$ lower semicontinuous in $\bv^\B$ and  
    admits a minimizer $$u\in\bv^\B(\Omega)\cap\sobo^{k-1,n/(n-1)}(\Omega,\U).$$
    If the fidelity term is strictly convex,
    then the minimizer is unique. 
    \end{theorem}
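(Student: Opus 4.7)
The plan is to run the direct method as in Theorem \ref{thm:lsc_any_B}, letting $\C$-ellipticity replace $L^p$-reflexivity in the compactness step and letting Theorem \ref{thm:GR} simultaneously supply the claimed extra Sobolev regularity. I would start from a minimizing sequence $(u_j)_j\subset\bv^\B(\Omega)$. Coercivity of $\Phi_g$ on $L^p(\Omega,\U)$ bounds $\|u_j\|_{L^p}$ and hence $\|u_j\|_{L^1}$. The lower bound $\alpha_k f_k\geq\underline\alpha c(|\,\bigcdot\,|-1)$ together with $f_i\leq C(1+|\,\bigcdot\,|)$, which by convexity of $f_i$ forces an affine lower bound as well, converts the minimization inequality into a schematic estimate
$$
\underline\alpha c\,|\B u_j|(\Omega)\leq C_1+C_2\sum_{i=1}^{k-1}\|\nabla^i u_j\|_{L^1(\Omega)}.
$$
I would then close this by invoking Theorem \ref{thm:GR}, which delivers $\|u_j\|_{\sobo^{k-1,n/(n-1)}}\leq c(|\B u_j|(\Omega)+\|u_j\|_{L^1})$, and absorbing the residual $L^1$ norm into the coercive growth of $\Phi_g$; the outcome is a uniform bound on $(u_j)_j$ in $\sobo^{k-1,n/(n-1)}(\Omega,\U)$.

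Up to a subsequence one then has $u_j\wk u$ in $\sobo^{k-1,n/(n-1)}$, hence $\nabla^i u_j\wk\nabla^i u$ in $L^{n/(n-1)}(\Omega,\sym^i(\R^n,\U))$ for $i=1,\dots,k-1$, $\B u_j\wstar\B u$ in $\mathcal M(\Omega,\V)$, and by the Sobolev embedding $\sobo^{k-1,n/(n-1)}\hookrightarrow L^{n/(n-k)}$ combined with Rellich--Kondrachov, $u_j\to u$ strongly in $L^p$ since $p<n/(n-k)$. I would then verify the lower semicontinuity of $\tilde I$ along these convergences in three standard pieces: the fidelity term by the standing hypothesis on $\Phi_g$; each intermediate integral $\int_\Omega\alpha_i f_i(x,\nabla^i u_j)\dif x$ by the classical lower semicontinuity result for Carath\'eodory integrands convex in the last variable (with two-sided linear growth) under weak $L^1$-convergence; and the top order term by applying Proposition \ref{prop:lsc} to the nonnegative Borel integrand $\alpha_k f_k$, whose recession equals $\alpha_k(x) f_k^\infty(x,z)$ on $\bar\Omega\times\V$ by continuity of $\alpha_k$ and the assumed existence of $f_k^\infty$. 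Summing the three inequalities gives $\tilde I[u;\alpha]\leq\liminf_j\tilde I[u_j;\alpha]$, so $u\in\bv^\B(\Omega)\cap\sobo^{k-1,n/(n-1)}(\Omega,\U)$ is a minimizer. Uniqueness when $\Phi_g$ is strictly convex is inherited verbatim from the midpoint argument of Theorem \ref{thm:lsc_any_B}.

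The main technical obstacle will be the \emph{a priori} estimate described above: because the lower order integrands are only real-valued and bounded above by linear growth, they can contribute terms of order $-\|\nabla^i u_j\|_{L^1}$ to $\tilde I[u_j;\alpha]$ that must be balanced against the coercive $\underline\alpha c\,|\B u_j|(\Omega)$ coming from the top order term. The balancing is made possible precisely by the quantitative bound of Theorem \ref{thm:GR}, which is the point at which $\C$-ellipticity enters the argument in an essential way; without it, there is no handle on the intermediate derivatives and the coupling between the $f_i$-bounds and the $f_k$-bound cannot be closed.
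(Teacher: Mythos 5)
Your overall architecture coincides with the paper's: direct method, compactness in $\bv^\B$ upgraded to $\sobo^{k-1,n/(n-1)}$ via Theorem \ref{thm:GR}, the top-order term handled by Proposition \ref{prop:lsc} applied to $\alpha_k f_k$, and uniqueness by the midpoint argument of Theorem \ref{thm:lsc_any_B}. The one genuinely different route is your treatment of the intermediate terms $\int_\Omega\alpha_i f_i(x,\nabla^i u_j)\,\dif x$: you invoke the classical weak-$L^1$ lower semicontinuity theorem for convex Carath\'eodory integrands, whereas the paper stays inside its Young-measure framework --- it generates a Young measure from $(\nabla^i u_j)_j$, checks uniform integrability of $(\alpha_i f_i(\,\bigcdot\,,\nabla^i u_j))_j$ via the growth bound and de la Vall\'ee Poussin, passes to the limit with Proposition \ref{prop:cts_YM}, and concludes with Jensen's inequality and Lemma \ref{lem:barycentre}. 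Both are legitimate; the paper's version buys internal consistency with Appendix \ref{sec:YM} and avoids citing an external theorem, while yours is shorter if one is willing to import the classical result. Your additional observation that $u_j\to u$ strongly in $L^p$ by Sobolev embedding and Rellich--Kondrachov is correct but not needed, since weak lower semicontinuity of $\Phi_g$ suffices.

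The one step that does not close as written is your a priori estimate. From
$\underline\alpha c\,|\B u_j|(\Omega)\leq C_1+C_2\sum_i\|\nabla^i u_j\|_{L^1}$
you propose to conclude via Theorem \ref{thm:GR}; but that theorem bounds $\sum_i\|\nabla^i u_j\|_{L^1}$ by $c\left(|\B u_j|(\Omega)+\|u_j\|_{L^1}\right)$, so substituting it back reintroduces $|\B u_j|(\Omega)$ on the right-hand side with a constant $C_2 c$ that has no reason to be smaller than $\underline\alpha c$; the absorption is circular. To balance genuinely negative lower-order contributions of linear growth you would need an Ehrling-type interpolation inequality $\|\nabla^i u\|_{L^1}\leq\epsilon|\B u|(\Omega)+C_\epsilon\|u\|_{L^1}$, which Theorem \ref{thm:GR} does not by itself provide. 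The paper sidesteps this entirely by asserting boundedness ``by the coercivity assumptions,'' which is only unproblematic when the lower-order integrands do not destroy coercivity (e.g.\ when the $f_i$ are bounded below); if you want to allow the $f_i$ to be genuinely unbounded below with linear decay, you must either prove the interpolation inequality for $\C$-elliptic $\B$ or add a lower bound on the $f_i$ to the hypotheses.
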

\begin{proof}
If $k=1$ the statement collapses to Theorem \ref{thm:lsc_any_B}.
The $\C$-ellipticity of $\B$ is still needed to make use of Theorem~\ref{thm:GR},
which grants that $u\in L^{n/(n-1)}(\Omega,\U)$.
We now turn to the case $k\geq 2$. 

If $(u_j)_j\subset\bv^\B(\Omega)$ is a minimizing sequence,
then $(u_j)_j$ is bounded in $\bv^\B(\Omega)$ by the coercivity assumptions, and hence also in $\sobo^{k-1,n/(n-1)}(\Omega,\U)$ thanks to Theorem~\ref{thm:GR}.
Let $u\in\bv^\B$ be a weak-$\ast$ limit point of $(u_j)_j$.
By the same reasoning as in the proof of Theorem~\ref{thm:lsc_any_B}
we have that \eqref{eq:0order} and \eqref{eq:korder} with $f=f_k$ hold.
We now fix $1\leq i\leq k-1$ and look at a Young measure $\ym$ generated by $(\nabla^iu_j)_j$, which is bounded in $L^{n/(n-1)}(\Omega)$. By the growth bound on $f_i$ and the de la Vall\'ee Poussin criterion, we have that $(\alpha_if_i(\,\bigcdot\,,\nabla^iu_j))_j$ is uniformly integrable. We can thus employ Proposition~\ref{prop:cts_YM} for $f=\alpha_if_i$ to obtain
\begin{align*}
    \liminf_{j\rightarrow\infty}\int_{\Omega}\alpha_i(x)f_i(x,\nabla^iu_j(x))\dif x&= \int_{\Omega}\alpha_i(x)\langle\nu_x, f_i(x,\,\bigcdot\,)\rangle\dif x \\ 
    & \geq\int_{\Omega} \alpha_i(x)f_i(x,\bar\nu_x)\dif x\\
    &=\int_{\Omega}\alpha_i(x)f_i(x,\nabla^iu(x))\dif x,
\end{align*}
where we used Jensen's inequality and Lemma~\ref{lem:barycentre}.
We infer that
$$ \liminf_{j\to+\infty} \tilde I[u_j;\alpha]\geq \tilde I[u;\alpha],$$
and we can conclude that  $u\in\bv^\B(\Omega)\subset\sobo^{k-1,n/(n-1)}(\Omega,\U)$ is a minimizer of $\tilde I[\,\bigcdot\,,\alpha]$.

The uniqueness follows exactly by the same argument as in the proof of Theorem~\ref{thm:lsc_any_B},
so the conclusion is achieved.
\end{proof}

\begin{remark}
   {
   If  $\Phi_g(u)=\| u - g \|_{L^1(\Omega)}$,
   uniqueness might fail in Theorem \ref{thm:lsc_C-ell}}.
\end{remark}

\section{The bilevel training scheme in the space $\bv^\B_p$}\label{sec:bilevel}

We devote this section to the proof of our main theoretical result, that is,
the existence of solutions to the bilevel scheme \eqref{eq:T-L1}--\eqref{eq:T-L2}.
The study of the lower level problem will be addressed by Theorem \ref{thm:lsc_any_B}.
A variant involving functionals as in Theorem
\ref{thm:lsc_C-ell} will also be presented, see Remark \ref{thm:main2}.

    \begin{theorem}\label{thm:main}
	    Let us fix $p\in(1,+\infty)$, $g\in L^p(\Omega,\U)$, $\underline \alpha>0$ and $\alpha\in\hold(\bar\Omega, [\underline \alpha,+\infty))$.
        Let $f\colon\Omega\times\V\rightarrow[0,+\infty)$ be an integrand satisfying the assumptions outlined in Subsection \ref{sec:setup-bilevel},
        and suppose further that
        the recession function $f^\infty$ in \eqref{eq:recession} exists for all $(x,z)\in\bar\Omega\times\V$.
        Then, the training scheme
        \eqref{eq:T-L1}--\eqref{eq:T-L2} in Subsection \ref{sec:setup-bilevel}
        admits a solution $\alpha^\ast\in {\rm Adm}$ and
        it provides an associated optimally reconstructed image $u_{\alpha^\ast}\in\bv^\B_p(\Omega)$.
    \end{theorem}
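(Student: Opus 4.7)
The plan is to apply the direct method at the upper level, treating the lower level pointwise in $\alpha$ by invoking Theorem~\ref{thm:lsc_any_B}. First I would fix a minimizing sequence $(\alpha_j)_j \subset \Adm$ for \eqref{eq:T-L1}. Since $\Adm$ is equibounded in $L^\infty$ and equicontinuous with common modulus $\omega$, Arzel\`a--Ascoli yields a (not relabeled) subsequence converging uniformly to some $\alpha^\ast \in \Adm$. For each $j$, Theorem~\ref{thm:lsc_any_B} provides a lower-level minimizer $u_j \in \bv^\B_p(\Omega)$ of $I[\,\bigcdot\,;\alpha_j]$.

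Next I would extract compactness of $(u_j)_j$ in $\bv^\B_p(\Omega)$. Testing minimality against the constant competitor $v \equiv 0$ and using $f(x,0) \leq C$ together with the uniform bound $\alpha_j \leq \overline\alpha$ gives $I[u_j;\alpha_j] \leq \Phi_g(0) + \overline\alpha\, C\,\mathscr L^n(\Omega)$. Coercivity of $\Phi_g$ on $L^p$ then bounds $\|u_j\|_{L^p}$, while the coercivity $f \geq c(|\,\bigcdot\,|-1)$, the definition \eqref{eq:action}, and the uniform lower bound $\alpha_j \geq \underline\alpha > 0$ control $|\B u_j|(\Omega)$. Passing to a further subsequence, $u_j \wstar u^\ast$ in $\bv^\B_p(\Omega)$.

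The main obstacle is to prove that $u^\ast$ is a lower-level minimizer for the limit weight $\alpha^\ast$; this requires a joint lower semicontinuity in $(u_j,\alpha_j)$, and I would handle it through the additive splitting
\[
    I[u_j;\alpha_j] = \Big( \Phi_g(u_j) + \int_\Omega \alpha^\ast(x) f(x,\dif \B u_j) \Big) + \int_\Omega \big(\alpha_j(x) - \alpha^\ast(x)\big) f(x,\dif \B u_j).
\]
The last integral is controlled by $\|\alpha_j-\alpha^\ast\|_\infty$ times a uniform bound on $\int_\Omega |f(x,\dif \B u_j)|$ coming from the upper growth $|f| \leq C(1+|\,\bigcdot\,|)$ and from $|\B u_j|(\Omega) \leq C$, so it vanishes. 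For the first bracket, weak lower semicontinuity of $\Phi_g$ in $L^p$, together with Proposition~\ref{prop:lsc} applied to the nonnegative Borel integrand $(x,z) \mapsto \alpha^\ast(x) f(x,z)$---which is convex in $z$ and whose recession $\alpha^\ast(x) f^\infty(x,z)$ exists pointwise because $\alpha^\ast \in \hold(\bar\Omega)$ and $f^\infty$ is assumed to exist---yields $I[u^\ast;\alpha^\ast] \leq \liminf_j I[u_j;\alpha_j]$.

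The same vanishing-error estimate applied with $u_j$ replaced by a fixed competitor $v \in \bv^\B_p(\Omega)$ shows $I[v;\alpha_j] \to I[v;\alpha^\ast]$. Minimality of $u_j$ at level $\alpha_j$ therefore passes to the limit:
\[
    I[u^\ast;\alpha^\ast] \leq \liminf_j I[u_j;\alpha_j] \leq \lim_j I[v;\alpha_j] = I[v;\alpha^\ast],
\]
so one may set $u_{\alpha^\ast} \coloneqq u^\ast$. Finally, weak lower semicontinuity of $F$ on $L^p$ applied to $u_j \wk u^\ast$ gives $F(u_{\alpha^\ast}) \leq \liminf_j F(u_j) = \inf\{F(u_\alpha) : \alpha \in \Adm\}$, and the pair $(\alpha^\ast,u_{\alpha^\ast})$ solves the scheme.
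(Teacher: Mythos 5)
Your proposal is correct and follows essentially the same route as the paper: a minimizing sequence of weights converging uniformly in $\Adm$, lower-level minimizers made precompact in $\bv^\B_p(\Omega)$ by testing against $v=0$, the error term $\int_\Omega(\alpha_j-\alpha^\ast)f(x,\dif\B u_j)$ killed by the uniform bound on $|\B u_j|(\Omega)$ and $\|\alpha_j-\alpha^\ast\|_{L^\infty}\to 0$, and lower semicontinuity of the frozen-weight functional (the paper cites Theorem~\ref{thm:lsc_any_B}, which rests on the same Proposition~\ref{prop:lsc} you invoke directly for $\alpha^\ast f$). The chaining of inequalities and the final appeal to weak lower semicontinuity of $F$ are identical to the paper's argument.
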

    \begin{proof}
    Let $(\alpha_j)_j\subset\Adm$ be a minimizing sequence
    for the upper level objective $F$.
    Under our assumptions on the admissible weights,
    we may suppose that $\alpha_j\rightarrow \alpha^\ast\in\Adm$ uniformly in $\bar\Omega$.
    To prove the result, it suffices to show that
    \begin{align}\label{eq:alpha_star}
        F(u_{\alpha^\ast})\leq \lim_{j\rightarrow+\infty}F(u_j),
    \end{align}
    where we abbreviated $u_j\coloneqq u_{\alpha_j}$
    for a minimizer of \eqref{eq:T-L2} with respect to the weight $\alpha_j$,
    which exists in the light of Theorem \ref{thm:lsc_any_B}.
   
   We firstly show that $(u_j)$ is weakly-$\ast$ precompact in $\bv^\B_p(\Omega)$.
   To see this, we observe that
   by definition of $u_j$ we have
    \begin{align} \label{eq:1bis}
    I[u_j;\alpha_j]\leq I[v;\alpha_j]
    \quad \text{for any } v\in\bv^\B_p(\Omega).
    \end{align}
   In particular,
   by selecting $v=0$ and recalling that $\|\alpha_j\|_{L^\infty}\leq \overline \alpha$,
   we find that $I[u_j;\alpha_j]\leq C$ for some $C\geq 0$ independent of $j$.
   Then, owing to the coercivity of $I$,
   $(u_j)_j$ is bounded in $\bv^\B_p(\Omega)$ and
   there exists $u\in\bv^\B_p(\Omega)$ such that,
   upon extraction of subsequences,
   $u_j \wstar u$ in $\bv^\B_p(\Omega)$.
    If we prove that $u=u_{\alpha^\ast}$,
    the conclusion is then achieved,
    since \eqref{eq:alpha_star} would then follow by the lower semicontinuity of $F$.
   
    We thus need to show that
    \begin{align}\label{eq:0}
    I[u;\alpha^\ast]\leq I[v;\alpha^\ast]\quad\text{for any }v\in\bv^\B_p(\Omega).
    \end{align}
    
    The uniform convergence of $(\alpha_j)_j$ along with \eqref{eq:1bis} yields
    \begin{align}\label{eq:1}
    \liminf_{j\rightarrow+\infty} I[u_j;\alpha_j]\leq \liminf_{j\rightarrow+\infty}I[v;\alpha_j]=I[v;\alpha^\ast]
    \quad \text{for any } v\in\bv^\B_p(\Omega).
    \end{align}
    Further, in view of the growth condition \eqref{eq:bounds}
    and of the bound of $(u_j)_j$ in $\bv^\B_p(\Omega)$
    we obtain the estimates
    \begin{align*}
        |I[u_j;\alpha_j]-I[u_j;\alpha^\ast]| & \leq \int_\Omega|\alpha_j-\alpha^\ast|f(x, \dif \B u_j) \\ 
        & \leq C(1+|\B u_j|(\Omega))\|\alpha_j-\alpha^\ast\|_{L^\infty} \\
        & \leq C \|\alpha_j-\alpha^\ast\|_{L^\infty},
    \end{align*}
    whence
    \begin{align}\label{eq:2}
        \lim_{j\to+\infty} \big| I[u_j;\alpha_j]-I[u_j;\alpha^\ast] \big| = 0.
    \end{align}
    Finally,
    by the lower semicontinuity result in Theorem \ref{thm:lsc_any_B},
    \begin{align}\label{eq:3}
    I[u;\alpha]\leq \liminf_{j\rightarrow\infty} I[u_j;\alpha^\ast],
    \end{align}
    so that, collecting \eqref{eq:1}--\eqref{eq:3}, we obtain \eqref{eq:0}.
    The proof is thus complete.
\end{proof}

If in the lower level problem \eqref{eq:T-L2-BV} the functional $I$ is replaced by $\tilde I$ as in Theorem \ref{thm:lsc_C-ell},
a result in the same spirit of the one above holds.
We only sketch it in the next remark, since it parallels closely Theorem \ref{thm:main}.

    \begin{remark}\label{thm:main2}
    Within the general framework of Subsection \ref{sec:setup-bilevel}, 
    we introduce a variant of the scheme \eqref{eq:T-L1}--\eqref{eq:T-L2}.
    For $k\in\mathbb{N}$, $k\geq 2$,
    we define the sets
    \begin{gather*}
    \Adm_{\mathrm{low}} \coloneqq \left\{
        \alpha\in\hold(\bar{\Omega},[0,\overline \alpha]^{k-1})
        : |\alpha_i(x)-\alpha_i(y)|\leq\omega(|x-y|) \;\text{for every } i \text{ and }x,y\in\overline{\Omega}
        \right\}, \\ 
    \widetilde \Adm \coloneqq \Adm_{\mathrm{low}} \times \Adm,
    \end{gather*}
    where $\omega$ is the same modulus of uniform continuity as in \eqref{eq:Adm}.
    We consider the bilevel problem
    \begin{gather}
	\label{eq:T-L1-Cell}
	\text{find } \alpha^\ast \in {\rm argmin} \left\{ F(u_\alpha) : \alpha \in \widetilde\Adm \right\}\\
	\label{eq:T-L2-Cell}
	\text{such that }u_\alpha\in {\rm argmin}\left\{\tilde I[u;\alpha]\colon u\in \bv^\B(\Omega)\right\},
	\end{gather}
    where $\tilde I$ is as in \eqref{eq:Itilde}.
    Under the assumptions of Theorem \ref{thm:lsc_C-ell},
    notably $\C$-ellipticity for $\B$,
    we are able to prove the existence of a solution, that is,
    an optimal regularizer $\alpha^\ast\in\widetilde\Adm$ for \eqref{eq:T-L1-Cell}.
    Let us outline the argument.
    
    If $(\alpha^j)_j\subset \widetilde\Adm$ is a minimizing sequence,
    we may assume that $\alpha^j\to\alpha^\ast\in\widetilde\Adm$ uniformly.
    By Theorem \ref{thm:lsc_C-ell} we can pick a sequence $(u_j)_j\subset \bv^\B(\Omega)$
    made of minimizers for \eqref{eq:T-L2-Cell} associated with $(\alpha^j)_j$.
    As a consequence of the coercivity of $\tilde I$,
    $(u_j)_j$ is bounded in $\bv^\B_p(\Omega)$,
    and thus, owing to Theorem \ref{thm:GR}, also in $\sobo^{k-1,n/(n-1)}(\Omega)$.
    Denoting by $u\in\bv^\B(\Omega)$ the weak-$\ast$ limit (up to subsequences) of $(u_j)_j$,
    the remainder of the proof follows the one of Theorem \ref{thm:main2},
    the most significant difference being the use
    of Theorem \ref{thm:lsc_C-ell} instead of Theorem \ref{thm:lsc_any_B}
    to obtain the analogue of \eqref{eq:3}.
    \end{remark}

\section{Numerical examples}\label{sec:num}
In this section we provide some numerical results for image reconstruction
by focusing on some specific instances of the differential operators considered above. 
These numerical examples show the applicability and versatility of our approach,
which, as we will see, is able to yield results 
that are comparable, and in certain cases even better,
than the ones obtained by using some standard high quality regularizers,
such as the Total Generalized Variation (TGV) \cite{TGV} and its weighted version \cite{bilevelTGV}.
Since our main target here is to evaluate the performance
of the types of regularizers that we introduced,
we restrict ourselves to two particular cases of image denoising.
Firstly, in the class of first-order functionals, 
we consider a Huber-type TV regularization,
with both the regularization parameter $\alpha$ and the Huber parameter $\gamma$  being spatially dependent.
This can be considered as a functional
that incorporates a local choice between  TV and  Tikhonov regularization.
The second example is a spatially varying TV$^{2}$ regularization,
which is a second-order functional and
has the capability to improve the reconstructions
by eliminating the undesirable staircasing effect of TV \cite{PaSc}.
Even though in theory the TV$^{2}$ regularization is not able to preserve sharp edges,
we will see that its spatially varying  version produces high quality results
and can even outperform both the scalar and the spatially varying versions of TGV.

\subsection{Weighted Huber versions of  TV and TV$^{2}$}
Let $\gamma\in L^{\infty}(\Omega)$, with $\gamma\ge 0$, be fixed.
We define the spatially varying Huber function
$f_{\gamma}\colon \Omega \times \mathbb{R}^{n} \to \mathbb{R}$ as follows:
\begin{equation}\label{f_gamma}
f_{\gamma}(x,z)=
\begin{cases}
|z|-\dfrac{1}{2} \gamma(x), & \text { if } |z|\ge \gamma(x),\\[0.8em]
\dfrac{1}{2\gamma(x)} |z|^{2}, & \text { if } |z| < \gamma(x).
\end{cases}
\end{equation}
Obviously, for all $z\in \R^{n}$ and for almost all $x\in \Omega$,
$f_{\gamma}$ satisfies the coercivity and growth conditions in \eqref{eq:bounds}, namely
\begin{equation}\label{f_gamma_lg}
|z|-\frac{1}{2} \|\gamma\|_{L^{\infty}(\Omega)}\le f_{\gamma}(x,z) \le |z|. 
\end{equation}
Then, if $u\in \mathrm{BV}(\Omega)$, we define the ensuing convex function of the measure $\D u$ with the  alternative notations
\[\mathrm{TV}_{\gamma}(u)\coloneqq |f_{\gamma}(\D u)|(\Omega)\coloneqq \int_{\Omega} f_{\gamma}(x, d\D u).\]
An easy check shows that
the recession function of $f_\gamma$ (cf. \eqref{eq:recession}) is $f_\gamma^\infty(x,z)=|z|$. Thus, all the assumptions of Theorem \ref{thm:main} are trivially satisfied.
Consequently, $\mathrm{TV}_{\gamma}$ is indeed well-defined  as
\[\mathrm{TV}_{\gamma}(u)= \int_{\Omega} f_{\gamma}(x, \nabla u) \dif x + \int_{\Omega} d|\D^{s}u|, \]
and for $\alpha\in C(\overline{\Omega})$ with $\alpha(x)\ge\underline{\alpha}>0$ we can define its weighted version
\begin{equation}\label{weightedHuberTV_primal}
\mathrm{TV}_{\alpha, \gamma}(u)= \int_{\Omega} \alpha f_{\gamma}(x, \nabla u) \dif x + \int_{\Omega} \alpha\, d|D^{s}u|. 
\end{equation}
 Note that $\mathrm{TV}_{\alpha, \gamma}(u)$ can be equivalently defined via duality \cite{Dem}:
 \begin{equation}\label{weightedHuberTV_dual}
\mathrm{TV}_{\alpha, \gamma}(u)=\sup \left \{\int_{\Omega} u\,\mathrm{div} \phi\, \dif x  - \mathcal{I}_{\{|\,\bigcdot\, (x)|\le \alpha(x)|\}}(\phi)-\frac{1}{2} \int_{\Omega} \frac{\gamma}{\alpha} |\phi|^{2}\, \dif x:\; \phi\in \ccinfty(\Omega, \mathbb{R}^{n})  \right \}.
 \end{equation}
This is an immediate extension of the standard Huber TV functional \eqref{intro:hTV_dual} mentioned in the introduction. Similarly, for a function $u\in \mathrm{BV}^{2}(\Omega)\coloneqq\{u\in W^{1,1}(\Omega): D^{2}u \in \mathcal{M}(\Omega, \mathcal{S}^{n\times n})\}$ and $\alpha\in \hold(\overline{\Omega})$ with $\alpha(x)\geq \underline\alpha>0$, we define the weighted Huber TV$^{2}$ functional as
\begin{equation}\label{weightedHuberTV_primal}
\mathrm{TV}_{\alpha, \gamma}^{2}(u)= \int_{\Omega} \alpha f_{\gamma}(x, \nabla^{2} u)\, \dif x + \int_{\Omega} \alpha\, \dif |(\D^{2})^{s}u|,
\end{equation}
where $f_{\gamma}$ now is a function defined on $\Omega\times \R^{n\times n}$
defined by the natural analogue of \eqref{f_gamma}.
Again via duality, we find the equivalent expression
 \begin{equation}\label{weightedHuberTV_dual}
\mathrm{TV}_{\alpha, \gamma}^{2}(u)=\sup \left \{\int_{\Omega} u\,\mathrm{div}^{2} \phi\, \dif x  - \mathcal{I}_{\{|\,\bigcdot\, (x)|\le \alpha(x)\}}(\phi)-\frac{1}{2} \int_{\Omega} \frac{\gamma}{\alpha} |\phi|^{2}\, \dif x:\; \phi\in \ccinfty(\Omega, \mathbb{R}^{n})  \right \}.
 \end{equation}
 
Our examples concern the following lower level image denoising problems:
\begin{align}
I_{\gamma}^{1}[u;\alpha]
&\coloneqq\int_{\Omega}(u-g)^{2}\dif x + \mathrm{TV}_{\alpha ,\gamma}(u), \label{l1}\\
I_{\gamma}^{2}[u;\alpha]
&\coloneqq\int_{\Omega}(u-g)^{2}\dif x +  \mathrm{TV}_{\alpha, \gamma}^{2}(u). \label{l2}
\end{align}

\subsection{The bilevel problems}
The family of bilevel problems for the automatic computation of the spatial regularization parameter $\alpha$ associated with the functional $I_{\gamma}^{i}$ for $i=1,2$ is:
	\begin{gather}
	\label{bilevel_Huber1}
	\text{find }\alpha^\ast\in \underset{}{ \rm argmin} \left \{ F(u_{\alpha}): \; \alpha\in \mathrm{Adm} \right \}\\
	\label{bilevel_Huber2}
	\text{ such that  }u_{\alpha}= \underset{}{ \rm argmin}\left\{I_{\gamma}^{i}[u;\alpha]\colon u\in \bv^{i}(\Omega)\right\}.
	\end{gather}

In view of \eqref{f_gamma_lg}, the lower level problems \eqref{bilevel_Huber2} are well-defined,
while from Theorem \ref{thm:main} we know that
the overall schemes \eqref{bilevel_Huber1}--\eqref{bilevel_Huber2} admit a solution for the two alternative upper level objectives $F$ considered next. As we discussed in the introduction and repeat here for the sake of reading flow, we take into account two alternatives for the upper level objective functional $F$:
\begin{align}
 F_{\PSNR}(u)&= \int_{\Omega} |u-u_{\gt}|^{2}\dif x,\label{Fpsnr}\\
  F_{\stat}(u)&=\frac{1}{2} \int_{\Omega} \max(Ru-\overline{\sigma}^{2},0)^{2} \dif x + \frac{1}{2} \int_{\Omega} \min (Ru-\underline{\sigma}^{2},0)^{2} \dif x, \label{Fstat}
  \end{align}
 \[\text{where } Ru(x)\coloneqq\int_{\Omega} w(x,y) (u-g)^{2}(y) \dif y\;\; \text{ for } w\in L^{\infty}(\Omega\times \Omega), \text{  } \int_{\Omega}\int_{\Omega} w(x,y)\dif x\dif y=1.\]
The first cost functional corresponds to a maximization of the PSNR of the reconstruction and requires the knowledge of the ground truth $u_{\gt}$ \cite{bilevellearning, TGV_learning2, GVlearningfirst, noiselearning, pockbilevel}, while the second enforces the localized residuals $R u$ to belong in a certain tight corridor $[\underline{\sigma}^{2}, \overline{\sigma}^{2}]\coloneqq [\sigma^{2}-\epsilon, \sigma^{2} +\epsilon]$, $\sigma^{2}$ being the variance of the  noise $\eta$, which is assumed here to be Gaussian, see also \cite{HiRa17, hintermuellerPartII, bilevel_handbook, bilevelTGV}. The latter option has the advantage of being ground truth free, but knowledge or a good estimate for the noise variance $\sigma^{2}$ is needed. For the discrete version of the averaging filter $w$ in the definition of the localized residuals \eqref{Fstat} we use a filter of size $n_{w} \times n_{w}$, with entries of equal value that sum to  one.

Since a numerical projection to the admissible set $\mathrm{Adm}$ is not practical, here we also follow \cite{HiRa17, hintermuellerPartII, bilevel_handbook, bilevelTGV} and add instead a small $H^{1}$ term of the weight function $\alpha$ in the upper level objective, together with a supplementary box constraint $\mathcal{C}\coloneqq\{\alpha\in H^{1}(\Omega): \underline{\alpha} \le \alpha \le \overline{\alpha}\}$ for some $\underline{\alpha}, \overline{\alpha}\in \R$ with $0<\underline{\alpha}<\overline{\alpha}$. On the whole, we will use the following upper level objectives:
\begin{align*}
 \mathcal{F}_{\PSNR}(\alpha, u)&= \int_{\Omega} |u-u_{\gt}|^{2}\dif x + \frac{\lambda}{2} \|\alpha\|_{H^{1}(\Omega)}^{2},\\
  \mathcal{F}_{\stat}(\alpha, u)&=\frac{1}{2} \int_{\Omega} \max(Ru-\overline{\sigma}^{2},0)^{2}\dif x + \frac{1}{2} \int_{\Omega} \min (Ru-\underline{\sigma}^{2},0)^{2}\dif x + \frac{\lambda}{2} \|\alpha\|_{H^{1}(\Omega)}^{2},
\end{align*}
for some small $\lambda>0$. We will denote by $\hat{\mathcal{F}}$ the corresponding reduced objective functionals, that is $\hat{\mathcal{F}}_{\PSNR/\stat} (\alpha)\coloneqq \mathcal{F}_{\PSNR/\stat} (\alpha, u_{\alpha})$. That leads us to the bilevel minimization problems that we  tackle numerically: for $i=1,2$
\begin{gather}
\text{find }\alpha^\ast\in \underset{\alpha}{ \rm argmin}\;  \mathcal{F}_{\PSNR/\stat}(\alpha, u_{\alpha})\label{bilevels_final1}\\
\text{such that  }u_{\alpha}= \underset{u}{ \rm argmin}\;I_{\gamma}^{i}[u;\alpha] \;\; \text{ and } \l\l \alpha\in \mathcal{C}. \label{bilevels_final2}
\end{gather} 

 Note that in this setting it is not guaranteed that $\alpha\in C(\overline{\Omega})$,
 since $H^{1}(\Omega)$ does not embed in that space for dimensions higher than $1$.
 However, one can take advantage of a regularity result of the $H^{1}$-projection onto $\mathcal{C}$, denoted by $P_{\mathcal{C}}$ see \cite[Corollary 2.3]{hintermuellerPartII}. This projection is  applied to every iteration of the projected gradient algorithm,  which is to be used for the numerical solution of \eqref{bilevels_final1}--\eqref{bilevels_final2} and is described next, see Algorithm \ref{alg:bilevelTVandTV2}. In that case, it is ensured that the computed weight $\alpha^{k}$ at the $k$-th projected gradient iteration belongs to $H^{2}(\Omega)$, which for $n=2$ embeds compactly into any H\"older space $C^\beta(\overline{\Omega})$, $\beta\in(0,1)$.

\subsection{Strategy for fixing $\boldsymbol{\gamma}$} Since in our set up the function $\gamma$ is not part of the minimizing variables, it has to be fixed from the start. Our rationale for fixing $\gamma$ is that we would like to regularize high detailed areas with a weighted Tikhonov term $\frac{1}{2}\int_{\Omega} \tilde{\alpha} |\nabla u|^{2}\dif x$, with $\tilde{\alpha}$ having as low regularity as possible, e.g.\ $L^{\infty}(\Omega)$, in order to increase flexibility in the regularization. In the other areas we would like to regularize using a  weighted TV or TV$^{2}$ term with a spatially varying weight $\alpha$. This will happen if $\gamma$ is large in such detailed areas in order to allow for the second case in \eqref{f_gamma} and small otherwise. We thus adopt the following strategy: We first solve an auxiliary bilevel problem with a weighted Tikhonov regularizer using the upper level objective $\mathcal{F}_{\stat}$. The output is a weight $\tilde{\alpha}$ that essentially acts as an edge detector, since it is small on the edges and on the detailed areas of the image.
We then invert this weight and set
\begin{equation}\label{inv_alphatilde}
\gamma=s \frac{1}{\tilde{\alpha}}
\end{equation}
for some constant $s>0$.
 By choosing the function $\gamma$  as in \eqref{inv_alphatilde}, we have that when $\tilde{\alpha}$ is small (fine scale details), $\gamma$ will be large and thus the second case in \eqref{f_gamma} will be selected with a weight $\frac{1}{2 \gamma (x)}=\frac{s}{2}\tilde{\alpha}$ in front of the term $|\nabla u|^{2}$. On the other hand, when $\tilde{\alpha}$ is large, then $\gamma$ will be small and thus a TV or TV$^{2}$ term will be preferred, i.e., first case in \eqref{f_gamma}. In the third images of the top rows of Figures \ref{fig:parrot_stat} and \ref{fig:hatchling_stat}, we see  how the resulting $\gamma$ function looks like for the example images. Details for the computation of $\tilde{\alpha}$ via the auxiliary bilevel Tikhonov problem are given in the next section. We note however that, instead of solving a bilevel weighted Tikhonov problem to compute $\tilde{\alpha}$ and hence $\gamma$, one could alternatively employ some standard edge detector algorithms, like for instance the Canny method \cite{canny}.

\subsection{Numerical algorithm for solving the bilevel problems}
In this section we describe the  algorithm that we use for the numerical solution of the discrete versions of the bilevel problems \eqref{bilevels_final1}--\eqref{bilevels_final2}. Similar algorithms were presented in \cite{hintermuellerPartII, bilevelTGV}, so we limit ourselves to a brief description of the procedure, still providing all the necessary details to ensure reproducibility. 

The lower level problems \eqref{l1} and \eqref{l2} are substituted by their primal-dual optimality conditions, see \cite{stadler,bilevelTGV}:
\begin{align}
u-g-\mathrm{div} p&=0,\label{l1_res1}\\
\max(|\nabla u|, \gamma)p-\alpha\nabla u&=0,\label{l1_res2}
\end{align}
and
\begin{align}
u-g+\mathrm{div}^{2} p&=0,\label{l2_res1}\\
\max(|\nabla^{2} u|, \gamma)p-\alpha\nabla^{2} u&=0,\label{l2_res2}
\end{align}
with $p$ denoting the correspoding dual variable of each problem. For notation ease, we compactly write the above equations as $G_{1}(u,p)=0$ and $G_{2}(u,p)=0$. The application of the $\max$ function as well the multiplication in \eqref{l1_res2} and \eqref{l2_res2} are regarded component wise; note that here both $\alpha$ and $\gamma$ are spatially (i.e. pixel) dependent. We use standard forward and backward differences for the discretizations of  $\nabla$ and $\mathrm{div}$, see e.g.\ \cite{bilevelTGV}, and similarly for the discretizations of $\nabla ^{2}$ and $\mathrm{div}^{2}$, see \cite{PaSc}. For the numerical solution of \eqref{l1_res1}--\eqref{l1_res2}, we use a semismooth Newton algorithm as it is described in \cite{bilevelTGV} for the TGV case. Note that we do not add an additional Laplacian term for $u$ as in \cite{stadler, bilevelTGV}, and we do not smooth the $\max$ function. We terminate the semismooth Newton iterations when the Euclidean norm of both residuals is less than $10^{-4}$.

In order to solve the   minimization problems in \eqref{bilevels_final1}--\eqref{bilevels_final2}, where the lower level problems are substituted by  \eqref{l1_res1}--\eqref{l1_res2} and \eqref{l2_res1}--\eqref{l2_res2}, we employ a discretized projected gradient approach with Armijo line search as it is described in \cite{bilevelTGV}, originated from \cite{hintermuellerPartII}. The algorithm is summarized in Algorithm \ref{alg:bilevelTVandTV2}. We comment on the components that have not been clarified so far. The term $\Delta_{N}$ denotes the discrete Laplacian with zero Neumann boundary conditions. These are the desired boundary conditions for $\alpha$, as dictated by the regularity results for the $H^{1}$-projection $P_{\mathcal{C}}$. This projection is computed exactly as in \cite{bilevelTGV} by using the same method and parameters mentioned there. In our numerical computations we set $\underline{a}=10^{-8}$, $\overline{a}=5$, $n_{w}=7$, $\lambda=10^{-11}$, $\tau^{0}=10^{-3}$, $c=10^{-12}$, $\theta_{-}=0.25$, $\theta_{+}=2$. As initializations for $\alpha$, we use the constant functions $\alpha^{0}=0.5$ and $\alpha^{0}=1$ for the TV and TV$^{2}$ problems respectively. Regarding $\underline{\sigma}, \overline{\sigma}$, we use the formulas $\underline{\sigma}^{2}=\sigma^{2}(1-\frac{\sqrt{2}}{n_{w}})$ and $\overline{\sigma}^{2}=\sigma^{2}(1+\frac{\sqrt{2}}{n_{w}})$, which are based on the statistics of the extremes, see \cite[Section 4.2.1]{hintermuellerPartII}. In all our noisy images, the Gaussian noise has zero mean and variance $\sigma^{2}=0.01$. We terminate Algorithm \ref{alg:bilevelTVandTV2} after a fixed number of iterations $\mathrm{maxit}=100$, after which no noticeable reduction in the reduced objective function is observed, see also Figure \ref{fig:Fvalues}.

In order to produce a spatially varying Huber parameter $\gamma$ as described before, we solve an auxiliary bilevel Tikhonov problem where the lower level problem corresponds to $G_{T}(u)\coloneqq u-\mathrm{div}(\tilde{\alpha} \nabla u)-g=0$, i.e., the first order optimality condition of a variational denoising problem with $\frac{1}{2}\int_{\Omega}\tilde{\alpha} |\nabla u|^{2}\dif x$ as regularizer and $L^{2}$ fidelity term. In order to do so, we utilize again the  projected gradient algorithm described in Algorithm \ref{alg:bilevelTVandTV2}, adjusted to this regularizer. We use no additional $H^{1}$ regularization for $\gamma$ and the $H^{1}$-projection $P_{\mathcal{C}}$ is substituted by a simple $L^{2}$ projection, that is, $\alpha^{k+1}=\max (\min (\alpha^{k}-\tau^{k}\nabla_{a}\hat{\mathcal{F}}(\alpha^{k}), \overline{\alpha}), \underline{\alpha})$. We use 100 projected gradient iterations with $n_{w}$, $\underline{\sigma}, \overline{\sigma}$, $\tau^{0}$, $c$, $\theta_{-}$, $\theta_{+}$ as before, as well as $\underline{\alpha}=10^{-8}$, $\overline{\alpha}=15$, $\alpha_{0}=15$. The equation $G_{T}$ is solved exactly with a linear system solver. We use again 100 projected gradient iterations to get an output weight $\tilde{a}$. Then, as we mentioned in \eqref{inv_alphatilde} we define $\gamma=s/\tilde{\alpha}$. In all our experiments, we set $s=0.1$.

For comparison purposes, we also report TV and TV$^{2}$ denoising results with a scalar Huber parameter $\gamma$, which is always set $\gamma=10^{-3}$. We do that for both scalar and weighted regularization parameters $\alpha$. In the first case, we manually select the parameter $\alpha$ that maximizes the PSNR of the denoised image, computed with a semismooth Newton method as previously mentioned. The second case is computed exactly as in Algorithm \ref{alg:bilevelTVandTV2}. We also report the TGV reconstructions, both scalar and weighted versions, which are computed with the Chambolle-Pock primal-dual method \cite{chambolle2011first} as described in \cite{tgvcolour}. For the scalar case, again we manually select the TGV parameters $\alpha_{0}, \alpha_{1}$ that maximize the PSNR. For the weighted case, we use the spatially varying weights $\alpha_{0}, \alpha_{1}$ as produced in \cite{bilevelTGV} for the same image examples we are considering here.  In that work, these weights were computed via the ground truth-free bilevel approach but using a regularized lower level problem, i.e.\ with additional  $H_{1}$ regularizations in the primal variables of the TGV minimization problem. We remark that it turns out that when these weights are  directly fed into the Chambolle-Pock algorithm for the non-regularized problem as we do here, they produce a result of higher quality, hence the discrepancy between the PSNR and SSIM values we report here and then ones reported in \cite{bilevelTGV}.

 \begin{algorithm}[t!]
  \caption{\newline Projected gradient for  the bilevel huber TV (resp.\ TV$^{2}$) problems \eqref{bilevels_final1}--\eqref{bilevels_final2} \label{alg:bilevelTVandTV2}}
  \begin{algorithmic}
    \Statex {\textbf{Input}: $g$, $\underline{\alpha}$, $\overline{\alpha}$,  $\underline{\sigma}$, $\overline{\sigma}$, $\lambda$,  $\gamma$,  $n_{w}$, $\tau^{0}$,  $0<c<1$, $0<\theta_{-}<1\le \theta_{+}$}
    \Statex{\textbf{Initialize}: $\alpha^{0}\in\mathcal{C}$,  and set $k=0$.}
 \Repeat
     \State{Use Semismooth Newton to solve \eqref{l1_res1}--\eqref{l1_res2}  (resp.  \eqref{l2_res1}--\eqref{l2_res2}), i.e.\ 
          \[G_{1}(u^{k},p^{k})=0, \quad (\text{resp. } G_{2}(u^{k},p^{k})=0)\]}
    \State{Solve  for $(u^{\ast}, p^{\ast})$ the adjoint equation ($i=1$, resp.\ $i=2$)
    \[(D_{(u,p)} G_{i}(u^{k}, p^{k}))^{\top} (u^{\ast}, p^{\ast})=- D_{u} \mathcal{F} (u^{k},\alpha^{k})\]
    } 
  \State{Compute the derivative of the reduced objective w.r.t.\ $\alpha$ ($i=1$, resp.\ $i=2$)  as 
  \[\hat{\mathcal{F}}'(\alpha^{k})
=(D_{\alpha}G_{i}(\alpha^{k}))^{\top} (u^{\ast}, p^{\ast})+ D_{\alpha}\mathcal{F}(\alpha_{k})\] 
   } 
  \State{Compute the reduced gradient
  \begin{align*}
  \nabla_{\alpha} \hat{\mathcal{F}}(\alpha^{k})
  &=(I-\Delta_{N})^{-1}\hat{\mathcal{F}}'(\alpha^{k})
  \end{align*}
  }
\State{Compute the trial points 
\begin{align*}
\alpha^{k+1}&=P_{\mathcal{C}}\big(\alpha^{k}-\tau^{k}  \nabla_{\alpha} \hat{\mathcal{F}}(\alpha^{k})\big),\quad 
\end{align*}
}
\While{
\begin{align*}
\hat{\mathcal{F}}(\alpha^{k+1})
> \hat{\mathcal{F}}(\alpha^{k})
+c\big (\hat{\mathcal{F}}'(\alpha^{k})^{\top}(\alpha^{k+1}-\alpha^{k})\big)
\end{align*}
}{ (Armijo line search)}
\State{Set $\tau^{k}\coloneqq\theta_{-}\tau^{k}$,  and re-compute 
\begin{align*}
\alpha^{k+1}&=P_{\mathcal{C}}\big(\alpha^{k}-\tau^{k}  \nabla_{\alpha} \hat{\mathcal{F}}(\alpha^{k})\big)
\end{align*}
}
  \EndWhile
  \State{Update $\tau^{k+1}=\theta_{+}\tau^{k}$,  and $k\coloneqq k+1$}
 \Until{some stopping condition is satisfied}
  \end{algorithmic}
\end{algorithm}

\begin{figure}[t!]
	\centering		
	\begin{minipage}[t]{0.23\textwidth}
	\includegraphics[width=0.95\textwidth, trim={0cm 0cm 0cm 0cm},clip]{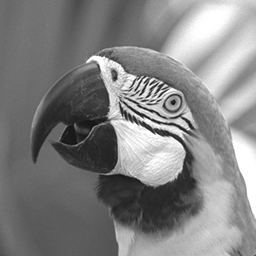}
	\end{minipage}
	\begin{minipage}[t]{0.23\textwidth}
	\includegraphics[width=0.95\textwidth, trim={0cm 0cm 0cm 0cm},clip]{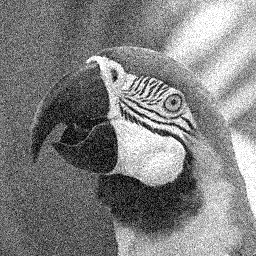}
	\end{minipage}
	\begin{minipage}[t]{0.26\textwidth}
	\includegraphics[width=0.95\textwidth, trim={2.0cm 3.5cm 2.0cm 2.9cm},clip]{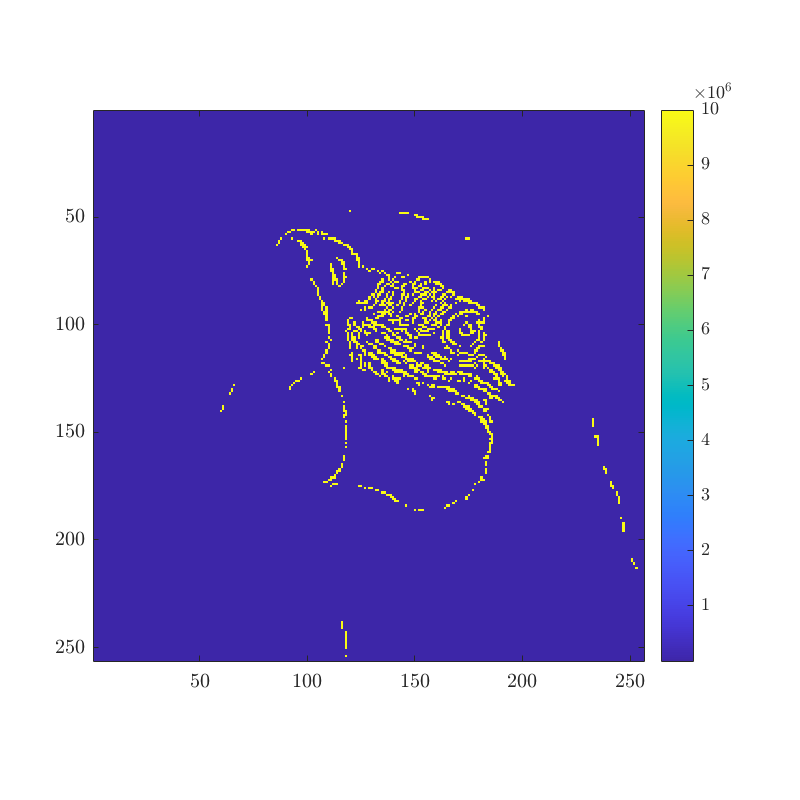}
	\end{minipage}
	
	\begin{minipage}[t]{0.23\textwidth}
	\centering \scalebox{.65}{Ground truth} \\[-3pt] \scalebox{.65}{PSNR=$\infty$, SSIM=1.000}\
	\end{minipage}
	\begin{minipage}[t]{0.23\textwidth}
	\centering \scalebox{.65}{Gaussian noise, $\sigma^{2}=0.01$} \\[-3pt] \scalebox{.65}{PSNR=20.04, SSIM=0.2773} 
	\end{minipage}
	\begin{minipage}[t]{0.23\textwidth}
	\centering \scalebox{.65}{Spatially varying $\gamma$} 
	\end{minipage}\vspace{1em}

	\begin{minipage}[t]{0.23\textwidth}
	\includegraphics[width=0.95\textwidth, trim={0cm 0cm 0cm 0cm},clip]{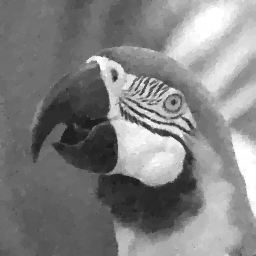}
	\end{minipage}
	\begin{minipage}[t]{0.23\textwidth}
	\includegraphics[width=0.95\textwidth, trim={0cm 0cm 0cm 0cm},clip]{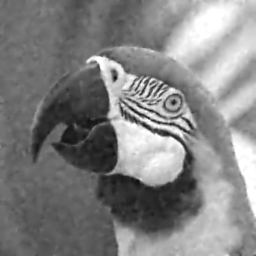}
	\end{minipage}
	\begin{minipage}[t]{0.23\textwidth}
	\includegraphics[width=0.95\textwidth, trim={0cm 0cm 0cm 0cm},clip]{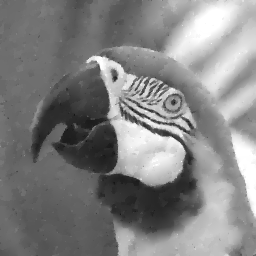}
	\end{minipage}
	\begin{minipage}[t]{0.23\textwidth}
	\includegraphics[width=0.95\textwidth, trim={0cm 0cm 0cm 0cm},clip]{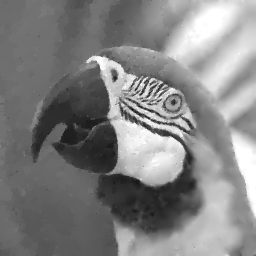}
	\end{minipage}

	\begin{minipage}[t]{0.23\textwidth}
	 \centering \scalebox{.65}{scalar Huber TV} \\[-3pt] \scalebox{.65}{PSNR=29.25, SSIM=0.8354}
	\end{minipage}
	\begin{minipage}[t]{0.23\textwidth}
	 \centering \scalebox{.65}{scalar Huber TV$^{2}$} \\[-3pt] \scalebox{.65}{PSNR=29.28, SSIM=0.8305}
	\end{minipage}
	\begin{minipage}[t]{0.23\textwidth}
	 \centering \scalebox{.65}{scalar TGV} \\[-3pt] \scalebox{.65}{PSNR=29.50, SSIM=0.8509}
	\end{minipage}
	\begin{minipage}[t]{0.23\textwidth}
	\centering \scalebox{.65}{Bilevel weighted TGV}\\[-3pt] \scalebox{.65}{PSNR=\textbf{29.84}, SSIM=0.8606} 
	\end{minipage}\vspace{1em}
	
       \begin{minipage}[t]{0.23\textwidth}
	\includegraphics[width=0.95\textwidth, trim={0cm 0cm 0cm 0cm},clip]{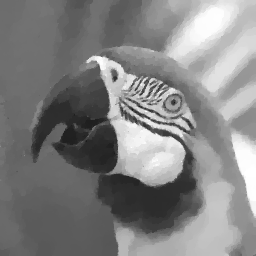}
	\end{minipage}	
	  \begin{minipage}[t]{0.23\textwidth}
	\includegraphics[width=0.95\textwidth, trim={0cm 0cm 0cm 0cm},clip]{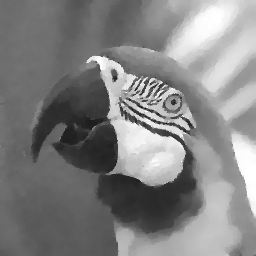}
	\end{minipage}
	  \begin{minipage}[t]{0.23\textwidth}
	\includegraphics[width=0.95\textwidth, trim={0cm 0cm 0cm 0cm},clip]{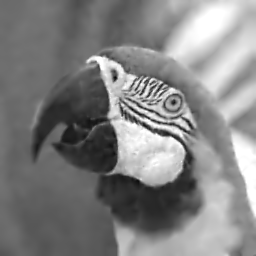}
	\end{minipage}
	\begin{minipage}[t]{0.23\textwidth}
	\includegraphics[width=0.95\textwidth, trim={0cm 0cm 0cm 0cm},clip]{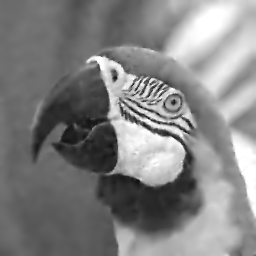}
	\end{minipage}
	
	\begin{minipage}[t]{0.23\textwidth}
	\centering \scalebox{.65}{Bilevel weighted Huber TV}\\[-3pt] \scalebox{.65}{with scalar  $\gamma$}\\ \scalebox{.65}{PSNR=29.20,  SSIM=0.8549 }
	\end{minipage}
	\begin{minipage}[t]{0.23\textwidth}
	\centering \scalebox{.65}{Bilevel weighted Huber TV}\\[-3pt] \scalebox{.65}{with spatially varying  $\gamma$}\\ \scalebox{.65}{PSNR=28.92,  SSIM=0.8571 }
	\end{minipage}
    \begin{minipage}[t]{0.23\textwidth}
		\centering \scalebox{.65}{Bilevel weighted Huber TV$^{2}$}\\[-3pt] \scalebox{.65}{with scalar  $\gamma$}\\ \scalebox{.65}{PSNR=29.81,  SSIM=\textbf{0.8705} }
	\end{minipage}
	    \begin{minipage}[t]{0.23\textwidth}
		\centering \scalebox{.65}{Bilevel weighted Huber TV$^{2}$}\\[-3pt] \scalebox{.65}{with spatially varying  $\gamma$}\\ \scalebox{.65}{PSNR=\textbf{29.84},  SSIM=0.8700 }
	\end{minipage}

       \begin{minipage}[t]{0.23\textwidth}
	\includegraphics[width=0.95\textwidth, trim={0cm 3.5cm 0cm 0cm},clip]{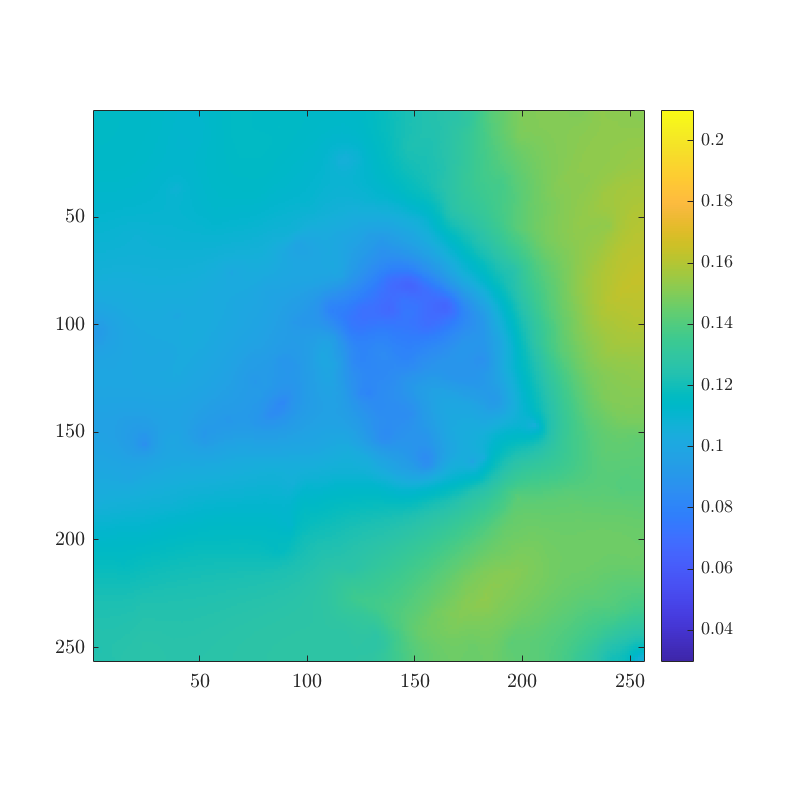}
	\end{minipage}	
	  \begin{minipage}[t]{0.23\textwidth}
	\includegraphics[width=0.95\textwidth, trim={0cm 3.5cm 0cm 0cm},clip]{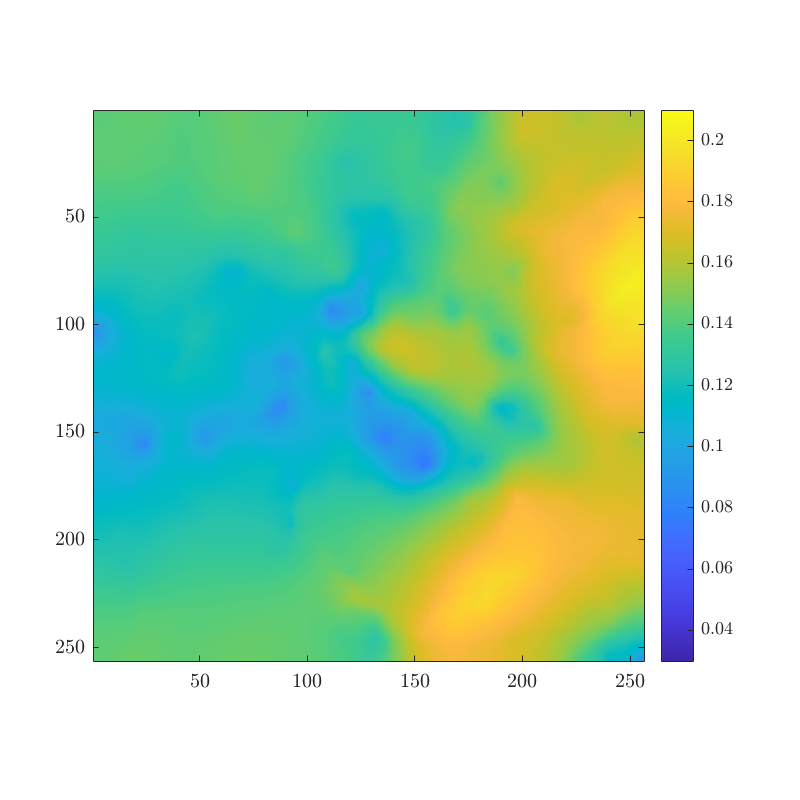}
	\end{minipage}
	  \begin{minipage}[t]{0.23\textwidth}
	\includegraphics[width=0.95\textwidth, trim={0cm 3.5cm 0cm 0cm},clip]{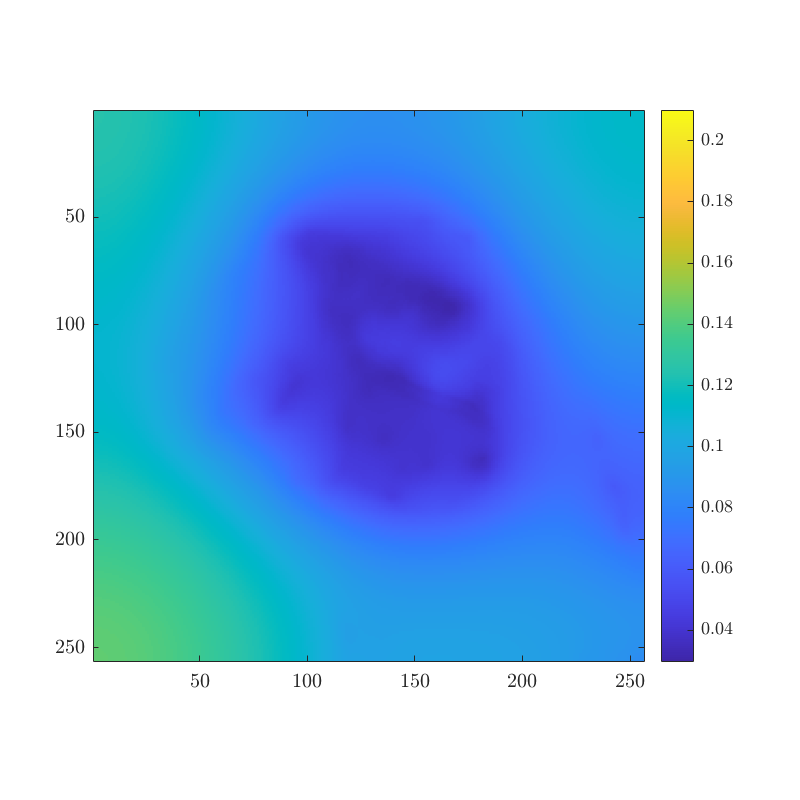}
	\end{minipage}
	\begin{minipage}[t]{0.23\textwidth}
	\includegraphics[width=0.95\textwidth, trim={0cm 3.5cm 0cm 0cm},clip]{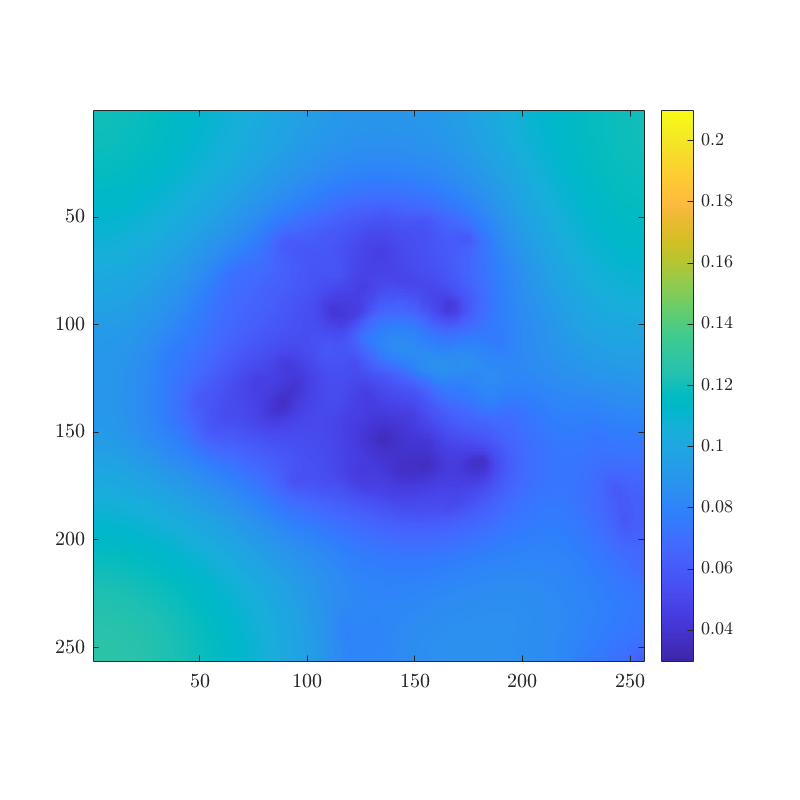}
	\end{minipage}
	
	\begin{minipage}[t]{0.23\textwidth}
	\centering \scalebox{.65}{Weight $\alpha$ of Huber TV}
	\end{minipage}
	\begin{minipage}[t]{0.23\textwidth}
\centering \scalebox{.65}{Weight $\alpha$ of Huber TV}
	\end{minipage}
    \begin{minipage}[t]{0.23\textwidth}
\centering \scalebox{.65}{Weight $\alpha$ of Huber TV$^{2}$}
	\end{minipage}
	    \begin{minipage}[t]{0.23\textwidth}
\centering \scalebox{.65}{Weight $\alpha$ of Huber TV$^{2}$}
	\end{minipage}
	\caption{\emph{Parrot} image: Weighted Huber TV and TV$^{2}$ denoising with spatially varying Huber parameter $\gamma$ and regularization parameter $\alpha$. The weights $\alpha$ are produced with the ground truth-free bilevel approach using $\mathcal{F}_{\stat}$. The highest PSNR and SSIM values are highlighted in bold font.}
		\label{fig:parrot_stat}
\end{figure}

\begin{figure}[t!]
	\centering		
	\begin{minipage}[t]{0.23\textwidth}
	\includegraphics[width=0.95\textwidth, trim={4cm 4cm 2cm 2.5cm},clip]{figures/parrot_clean.png}
	\end{minipage}
	\begin{minipage}[t]{0.23\textwidth}
	\includegraphics[width=0.95\textwidth, trim={4cm 4cm 2cm 2.5cm},clip]{figures/parrot_noisy_01.png}
	\end{minipage}
	
	\begin{minipage}[t]{0.23\textwidth}
	\centering \scalebox{.65}{Ground truth} \\[-3pt] \scalebox{.65}{PSNR=$\infty$, SSIM=1.000}\
	\end{minipage}
	\begin{minipage}[t]{0.23\textwidth}
	\centering \scalebox{.65}{Gaussian noise, $\sigma^{2}=0.01$} \\[-3pt] \scalebox{.65}{PSNR=20.04, SSIM=0.2773} 
	\end{minipage}\vspace{1em}

	\begin{minipage}[t]{0.23\textwidth}
	\includegraphics[width=0.95\textwidth, trim={4cm 4cm 2cm 2.5cm},clip]{paper_results/hubertv_bestscalar_psnr_u}
	\end{minipage}
	\begin{minipage}[t]{0.23\textwidth}
	\includegraphics[width=0.95\textwidth, trim={4cm 4cm 2cm 2.5cm},clip]{paper_results/hubertv2_bestscalar_psnr_u}
	\end{minipage}
	\begin{minipage}[t]{0.23\textwidth}
	\includegraphics[width=0.95\textwidth, trim={4cm 4cm 2cm 2.5cm},clip]{paper_results/tgv_bestscalar_psnr_u}
	\end{minipage}
	\begin{minipage}[t]{0.23\textwidth}
	\includegraphics[width=0.95\textwidth, trim={4cm 4cm 2cm 2.5cm},clip]{paper_results/tgv_spatial_u}
	\end{minipage}

	\begin{minipage}[t]{0.23\textwidth}
	 \centering \scalebox{.65}{scalar Huber TV} \\[-3pt] \scalebox{.65}{PSNR=29.25, SSIM=0.8354}
	\end{minipage}
	\begin{minipage}[t]{0.23\textwidth}
	 \centering \scalebox{.65}{scalar Huber TV$^{2}$} \\[-3pt] \scalebox{.65}{PSNR=29.28, SSIM=0.8305}
	\end{minipage}
	\begin{minipage}[t]{0.23\textwidth}
	 \centering \scalebox{.65}{scalar TGV} \\[-3pt] \scalebox{.65}{PSNR=29.50, SSIM=0.8509}
	\end{minipage}
	\begin{minipage}[t]{0.23\textwidth}
	\centering \scalebox{.65}{Bilevel weighted TGV}\\[-3pt] \scalebox{.65}{PSNR=\textbf{29.84}, SSIM=0.8606} 
	\end{minipage}\vspace{1em}
	
       \begin{minipage}[t]{0.23\textwidth}
	\includegraphics[width=0.95\textwidth, trim={4cm 4cm 2cm 2.5cm},clip]{paper_results/hubertv_gamma_scalar_u.png}
	\end{minipage}	
	  \begin{minipage}[t]{0.23\textwidth}
	\includegraphics[width=0.95\textwidth, trim={4cm 4cm 2cm 2.5cm},clip]{paper_results/hubertv_gamma_spatial_01_u.png}
	\end{minipage}
	  \begin{minipage}[t]{0.23\textwidth}
	\includegraphics[width=0.95\textwidth, trim={4cm 4cm 2cm 2.5cm},clip]{paper_results/hubertv2_gamma_scalar_a0_1_u.png}
	\end{minipage}
	\begin{minipage}[t]{0.23\textwidth}
	\includegraphics[width=0.95\textwidth, trim={4cm 4cm 2cm 2.5cm},clip]{paper_results/hubertv2_gamma_spatial_a0_1_u.png}
	\end{minipage}
	
	\begin{minipage}[t]{0.23\textwidth}
	\centering \scalebox{.65}{Bilevel weighted Huber TV}\\[-3pt] \scalebox{.65}{with scalar  $\gamma$}\\ \scalebox{.65}{PSNR=29.20,  SSIM=0.8549 }
	\end{minipage}
	\begin{minipage}[t]{0.23\textwidth}
	\centering \scalebox{.65}{Bilevel weighted Huber TV}\\[-3pt] \scalebox{.65}{with spatially varying  $\gamma$}\\ \scalebox{.65}{PSNR=28.92,  SSIM=0.8571 }
	\end{minipage}
    \begin{minipage}[t]{0.23\textwidth}
		\centering \scalebox{.65}{Bilevel weighted Huber TV$^{2}$}\\[-3pt] \scalebox{.65}{with scalar  $\gamma$}\\ \scalebox{.65}{PSNR=29.81,  SSIM=\textbf{0.8705} }
	\end{minipage}
	    \begin{minipage}[t]{0.23\textwidth}
		\centering \scalebox{.65}{Bilevel weighted Huber TV$^{2}$}\\[-3pt] \scalebox{.65}{with spatially varying  $\gamma$}\\ \scalebox{.65}{PSNR=\textbf{29.84},  SSIM=0.8700 }
	\end{minipage}
	\caption{Details of images shown in  Figure \ref{fig:parrot_stat}}
	\label{fig:parrot_stat_details}
\end{figure}

\begin{figure}[h!]
	\centering		
	\begin{minipage}[t]{0.23\textwidth}
	\includegraphics[width=0.95\textwidth, trim={0cm 0cm 0cm 0cm},clip]{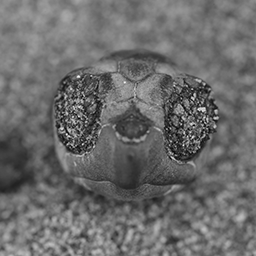}
	\end{minipage}
	\begin{minipage}[t]{0.23\textwidth}
	\includegraphics[width=0.95\textwidth, trim={0cm 0cm 0cm 0cm},clip]{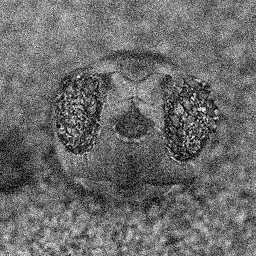}
	\end{minipage}
	\begin{minipage}[t]{0.26\textwidth}
	\includegraphics[width=0.95\textwidth, trim={2.0cm 3.5cm 2.0cm 2.9cm},clip]{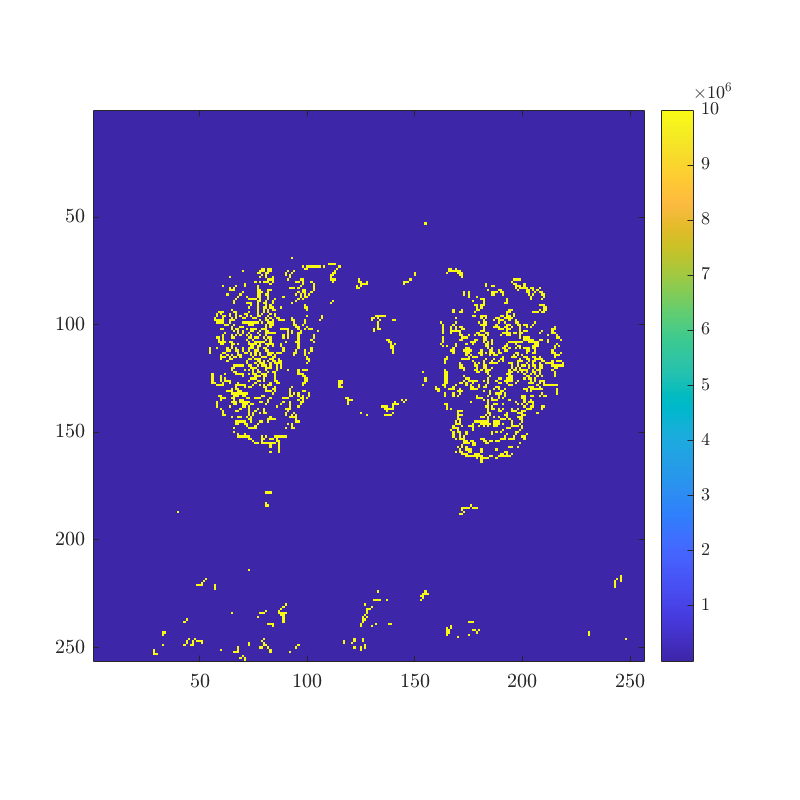}
	\end{minipage}
	
	\begin{minipage}[t]{0.23\textwidth}
	\centering \scalebox{.65}{Ground truth} \\[-3pt] \scalebox{.65}{PSNR=$\infty$, SSIM=1.000}\
	\end{minipage}
	\begin{minipage}[t]{0.23\textwidth}
	\centering \scalebox{.65}{Gaussian noise, $\sigma^{2}=0.01$} \\[-3pt] \scalebox{.65}{PSNR=20.00, SSIM=0.3349} 
	\end{minipage}
	\begin{minipage}[t]{0.23\textwidth}
	\centering \scalebox{.65}{Spatially varying $\gamma$} 
	\end{minipage}\vspace{1em}

	\begin{minipage}[t]{0.23\textwidth}
	\includegraphics[width=0.95\textwidth, trim={0cm 0cm 0cm 0cm},clip]{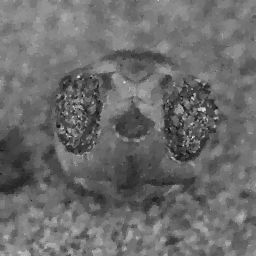}
	\end{minipage}
	\begin{minipage}[t]{0.23\textwidth}
	\includegraphics[width=0.95\textwidth, trim={0cm 0cm 0cm 0cm},clip]{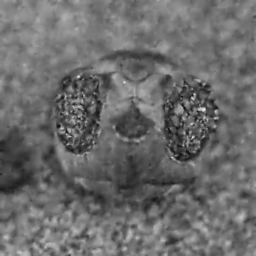}
	\end{minipage}
	\begin{minipage}[t]{0.23\textwidth}
	\includegraphics[width=0.95\textwidth, trim={0cm 0cm 0cm 0cm},clip]{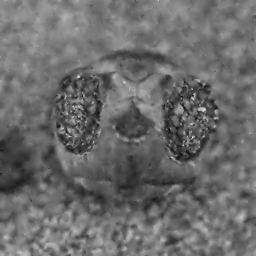}
	\end{minipage}
	\begin{minipage}[t]{0.23\textwidth}
	\includegraphics[width=0.95\textwidth, trim={0cm 0cm 0cm 0cm},clip]{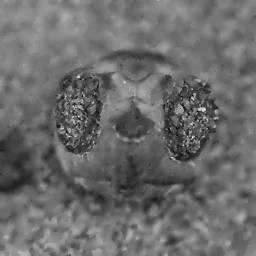}
	\end{minipage}

	\begin{minipage}[t]{0.23\textwidth}
	 \centering \scalebox{.65}{scalar Huber TV} \\[-3pt] \scalebox{.65}{PSNR=27.75, SSIM=0.7701}
	\end{minipage}
	\begin{minipage}[t]{0.23\textwidth}
	 \centering \scalebox{.65}{scalar Huber TV$^{2}$} \\[-3pt] \scalebox{.65}{PSNR=28.22, SSIM=0.8142}
	\end{minipage}
	\begin{minipage}[t]{0.23\textwidth}
	 \centering \scalebox{.65}{scalar TGV} \\[-3pt] \scalebox{.65}{PSNR=28.20, SSIM=0.8132}
	\end{minipage}
	\begin{minipage}[t]{0.23\textwidth}
	\centering \scalebox{.65}{Bilevel weighted TGV}\\[-3pt] \scalebox{.65}{PSNR=28.33, SSIM=0.8145} 
	\end{minipage}\vspace{1em}
	
       \begin{minipage}[t]{0.23\textwidth}
	\includegraphics[width=0.95\textwidth, trim={0cm 0cm 0cm 0cm},clip]{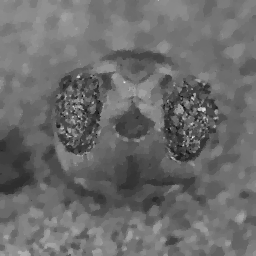}
	\end{minipage}	
	  \begin{minipage}[t]{0.23\textwidth}
	\includegraphics[width=0.95\textwidth, trim={0cm 0cm 0cm 0cm},clip]{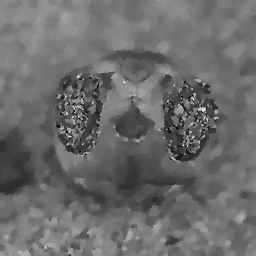}
	\end{minipage}
	  \begin{minipage}[t]{0.23\textwidth}
	\includegraphics[width=0.95\textwidth, trim={0cm 0cm 0cm 0cm},clip]{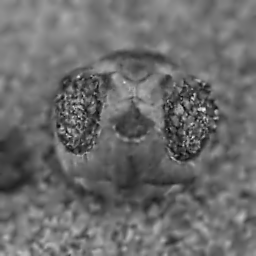}
	\end{minipage}
	\begin{minipage}[t]{0.23\textwidth}
	\includegraphics[width=0.95\textwidth, trim={0cm 0cm 0cm 0cm},clip]{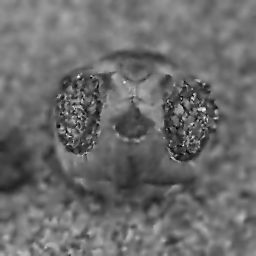}
	\end{minipage}
	
	\begin{minipage}[t]{0.23\textwidth}
	\centering \scalebox{.65}{Bilevel weighted Huber TV}\\[-3pt] \scalebox{.65}{with scalar  $\gamma$}\\ \scalebox{.65}{PSNR=27.50,  SSIM=0.7702 }
	\end{minipage}
	\begin{minipage}[t]{0.23\textwidth}
	\centering \scalebox{.65}{Bilevel weighted Huber TV}\\[-3pt] \scalebox{.65}{with spatially varying  $\gamma$}\\ \scalebox{.65}{PSNR=27.15,  SSIM=0.7688 }
	\end{minipage}
    \begin{minipage}[t]{0.23\textwidth}
		\centering \scalebox{.65}{Bilevel weighted Huber TV$^{2}$}\\[-3pt] \scalebox{.65}{with scalar  $\gamma$}\\ \scalebox{.65}{PSNR=\textbf{28.66},  SSIM=\textbf{0.8367} }
	\end{minipage}
	    \begin{minipage}[t]{0.23\textwidth}
		\centering \scalebox{.65}{Bilevel weighted Huber TV$^{2}$}\\[-3pt] \scalebox{.65}{with spatially varying  $\gamma$}\\ \scalebox{.65}{PSNR=28.44,  SSIM=0.8285 }
	\end{minipage}

       \begin{minipage}[t]{0.23\textwidth}
	\includegraphics[width=0.95\textwidth, trim={0cm 3.5cm 0cm 0cm},clip]{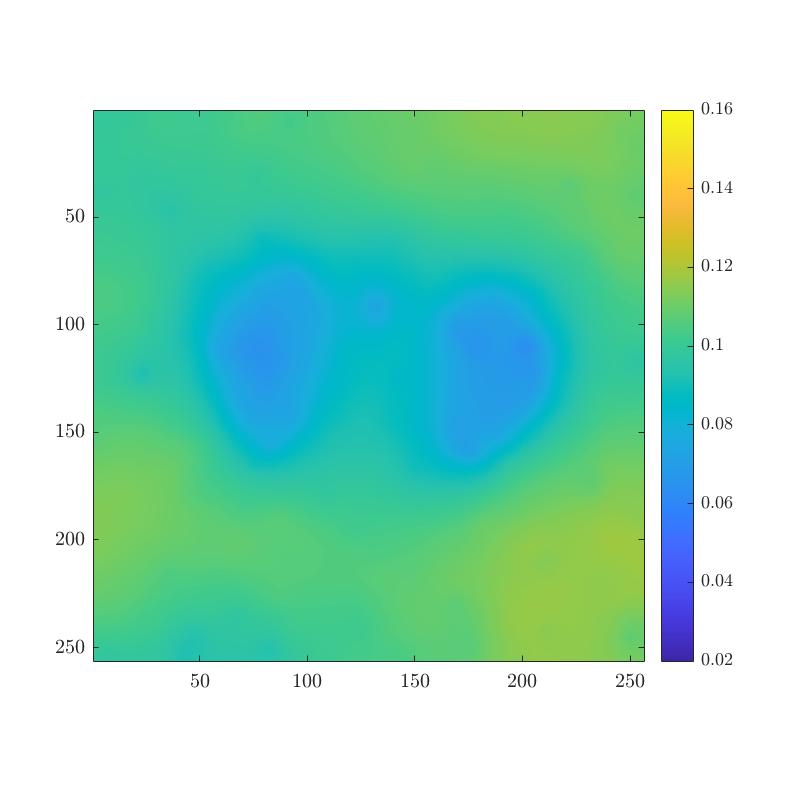}
	\end{minipage}	
	  \begin{minipage}[t]{0.23\textwidth}
	\includegraphics[width=0.95\textwidth, trim={0cm 3.5cm 0cm 0cm},clip]{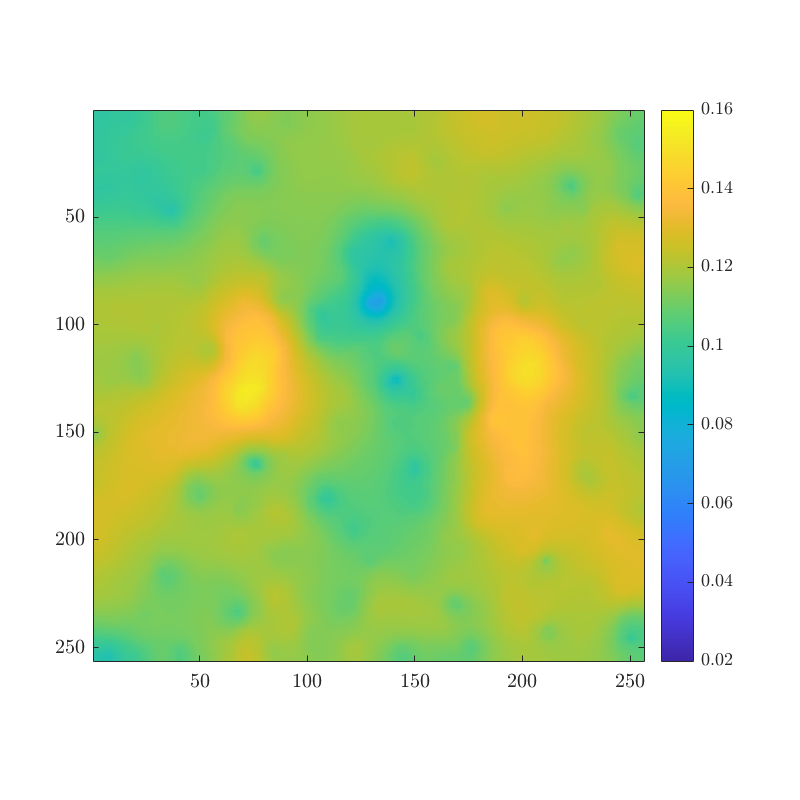}
	\end{minipage}
	  \begin{minipage}[t]{0.23\textwidth}
	\includegraphics[width=0.95\textwidth, trim={0cm 3.5cm 0cm 0cm},clip]{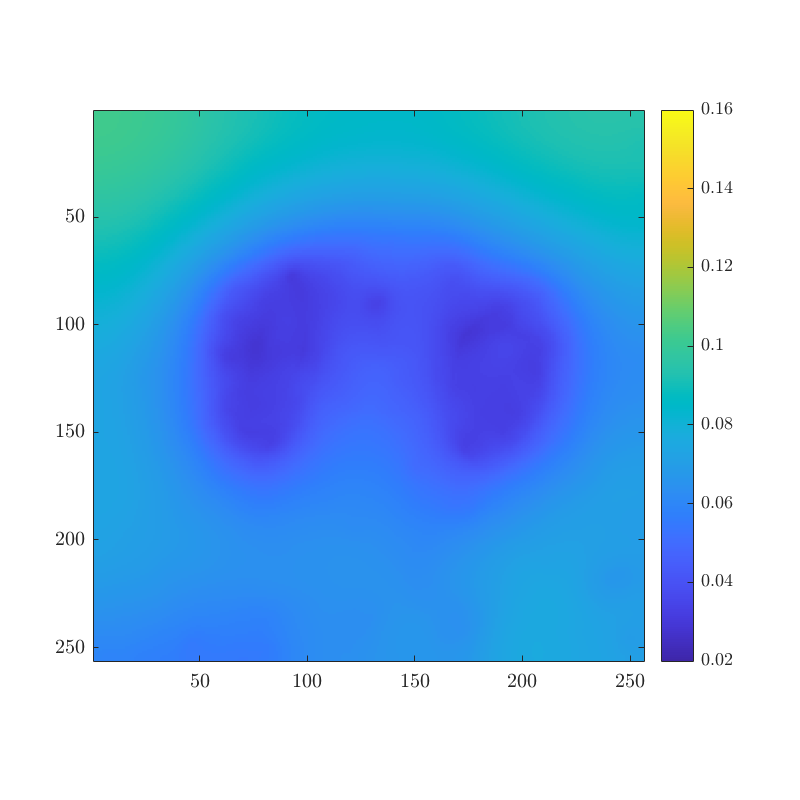}
	\end{minipage}
	\begin{minipage}[t]{0.23\textwidth}
	\includegraphics[width=0.95\textwidth, trim={0cm 3.5cm 0cm 0cm},clip]{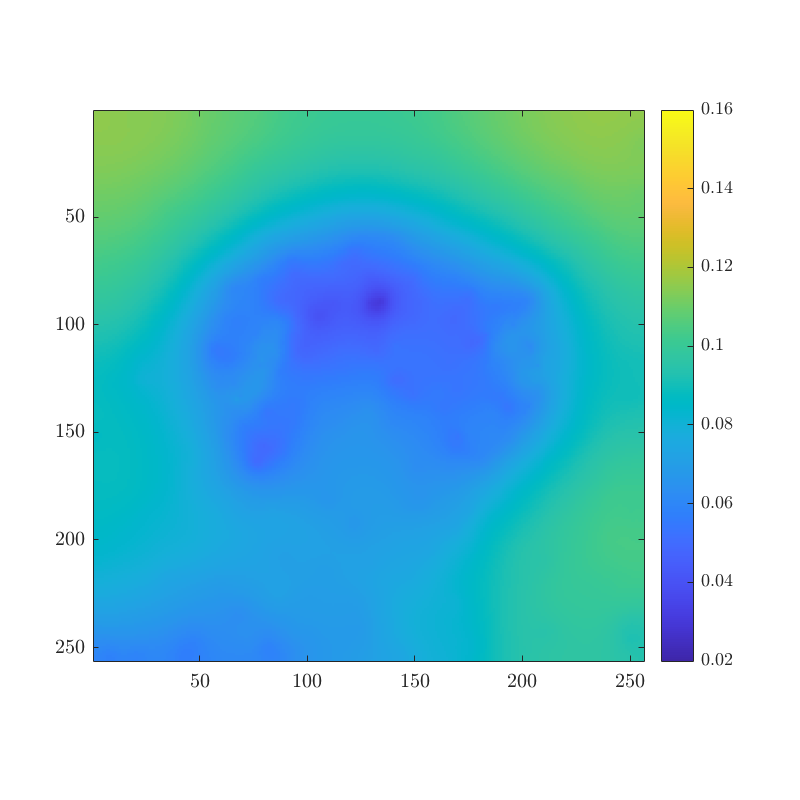}
	\end{minipage}
	
	\begin{minipage}[t]{0.23\textwidth}
	\centering \scalebox{.65}{Weight $\alpha$ of Huber TV}
	\end{minipage}
	\begin{minipage}[t]{0.23\textwidth}
\centering \scalebox{.65}{Weight $\alpha$ of Huber TV}
	\end{minipage}
    \begin{minipage}[t]{0.23\textwidth}
\centering \scalebox{.65}{Weight $\alpha$ of Huber TV$^{2}$}
	\end{minipage}
	    \begin{minipage}[t]{0.23\textwidth}
\centering \scalebox{.65}{Weight $\alpha$ of Huber TV$^{2}$}
	\end{minipage}

	\caption{\emph{Hatchling} image: Weighted Huber TV and TV$^{2}$ denoising with spatially varying Huber parameter $\gamma$ and regularization parameter $\alpha$. The weights $\alpha$ are produced with the ground truth-free bilevel approach using $\mathcal{F}_{\stat}$. The highest PSNR and SSIM values are highlighted in bold font.}
		\label{fig:hatchling_stat}
\end{figure}

\begin{figure}[h!]
	\centering		
	\begin{minipage}[t]{0.23\textwidth}
	\includegraphics[width=0.95\textwidth, trim={4.5cm 3.2cm 0.5cm 2.3cm},clip]{figures/hatchling_clean.png}
	\end{minipage}
	\begin{minipage}[t]{0.23\textwidth}
	\includegraphics[width=0.95\textwidth, trim={4.5cm 3.2cm 0.5cm 2.3cm},clip]{figures/hatchling_noisy_01.png}
	\end{minipage}
	
	\begin{minipage}[t]{0.23\textwidth}
	\centering \scalebox{.65}{Ground truth} \\[-3pt] \scalebox{.65}{PSNR=$\infty$, SSIM=1.000}\
	\end{minipage}
	\begin{minipage}[t]{0.23\textwidth}
	\centering \scalebox{.65}{Gaussian noise, $\sigma^{2}=0.01$} \\[-3pt] \scalebox{.65}{PSNR=20.04, SSIM=0.2773} 
	\end{minipage}\vspace{1em}

	\begin{minipage}[t]{0.23\textwidth}
	\includegraphics[width=0.95\textwidth, trim={4.5cm 3.2cm 0.5cm 2.3cm},clip]{paper_results/h_hubertv_bestscalar_psnr_u}
	\end{minipage}
	\begin{minipage}[t]{0.23\textwidth}
	\includegraphics[width=0.95\textwidth, trim={4.5cm 3.2cm 0.5cm 2.3cm},clip]{paper_results/h_hubertv2_bestscalar_psnr_u}
	\end{minipage}
	\begin{minipage}[t]{0.23\textwidth}
	\includegraphics[width=0.95\textwidth, trim={4.5cm 3.2cm 0.5cm 2.3cm},clip]{paper_results/h_tgv_bestscalar_psnr_u}
	\end{minipage}
	\begin{minipage}[t]{0.23\textwidth}
	\includegraphics[width=0.95\textwidth, trim={4.5cm 3.2cm 0.5cm 2.3cm},clip]{paper_results/h_tgv_spatial_u}
	\end{minipage}

	\begin{minipage}[t]{0.23\textwidth}
	 \centering \scalebox{.65}{scalar Huber TV} \\[-3pt] \scalebox{.65}{PSNR=27.75, SSIM=0.7701}
	\end{minipage}
	\begin{minipage}[t]{0.23\textwidth}
	 \centering \scalebox{.65}{scalar Huber TV$^{2}$} \\[-3pt] \scalebox{.65}{PSNR=28.22, SSIM=0.8142}
	\end{minipage}
	\begin{minipage}[t]{0.23\textwidth}
	 \centering \scalebox{.65}{scalar TGV} \\[-3pt] \scalebox{.65}{PSNR=28.20, SSIM=0.8132}
	\end{minipage}
	\begin{minipage}[t]{0.23\textwidth}
	\centering \scalebox{.65}{Bilevel weighted TGV}\\[-3pt] \scalebox{.65}{PSNR=28.33, SSIM=0.8145} 
	\end{minipage}\vspace{1em}
	
       \begin{minipage}[t]{0.23\textwidth}
	\includegraphics[width=0.95\textwidth, trim={4.5cm 3.2cm 0.5cm 2.3cm},clip]{paper_results/h_hubertv_gamma_scalar_u.png}
	\end{minipage}	
	  \begin{minipage}[t]{0.23\textwidth}
	\includegraphics[width=0.95\textwidth, trim={4.5cm 3.2cm 0.5cm 2.3cm},clip]{paper_results/h_hubertv_gamma_spatial_01_u.png}
	\end{minipage}
	  \begin{minipage}[t]{0.23\textwidth}
	\includegraphics[width=0.95\textwidth, trim={4.5cm 3.2cm 0.5cm 2.3cm},clip]{paper_results/h_hubertv2_gamma_scalar_a0_1_u.png}
	\end{minipage}
	\begin{minipage}[t]{0.23\textwidth}
	\includegraphics[width=0.95\textwidth, trim={4.5cm 3.2cm 0.5cm 2.3cm},clip]{paper_results/h_hubertv2_gamma_spatial_a0_1_u.png}
	\end{minipage}
	
	\begin{minipage}[t]{0.23\textwidth}
	\centering \scalebox{.65}{Bilevel weighted Huber TV}\\[-3pt] \scalebox{.65}{with scalar  $\gamma$}\\ \scalebox{.65}{PSNR=27.50,  SSIM=0.7702 }
	\end{minipage}
	\begin{minipage}[t]{0.23\textwidth}
	\centering \scalebox{.65}{Bilevel weighted Huber TV}\\[-3pt] \scalebox{.65}{with spatially varying  $\gamma$}\\ \scalebox{.65}{PSNR=27.15,  SSIM=0.7688 }
	\end{minipage}
    \begin{minipage}[t]{0.23\textwidth}
		\centering \scalebox{.65}{Bilevel weighted Huber TV$^{2}$}\\[-3pt] \scalebox{.65}{with scalar  $\gamma$}\\ \scalebox{.65}{PSNR=\textbf{28.66},  SSIM=\textbf{0.8367} }
	\end{minipage}
	    \begin{minipage}[t]{0.23\textwidth}
		\centering \scalebox{.65}{Bilevel weighted Huber TV$^{2}$}\\[-3pt] \scalebox{.65}{with spatially varying  $\gamma$}\\ \scalebox{.65}{PSNR=28.44,  SSIM=0.8285 }
	\end{minipage}

	\caption{Details of images shown in  Figure \ref{fig:hatchling_stat}}
		\label{fig:hatchling_stat_details}
	
\end{figure}

\begin{figure}[h!]
	\centering		
	\begin{minipage}[t]{0.23\textwidth}
	\includegraphics[width=0.95\textwidth, trim={0cm 0cm 0cm 0cm},clip]{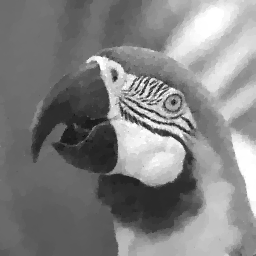}
	\end{minipage}
	\begin{minipage}[t]{0.23\textwidth}
	\includegraphics[width=0.95\textwidth, trim={0cm 0cm 0cm 0cm},clip]{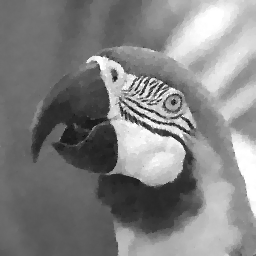}
	\end{minipage}
	\begin{minipage}[t]{0.23\textwidth}
	\includegraphics[width=0.95\textwidth, trim={0cm 0cm 0cm 0cm},clip]{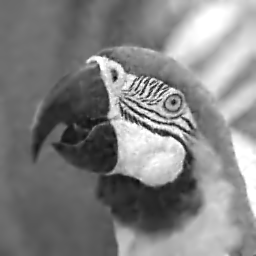}
	\end{minipage}
	\begin{minipage}[t]{0.23\textwidth}
	\includegraphics[width=0.95\textwidth, trim={0cm 0cm 0cm 0cm},clip]{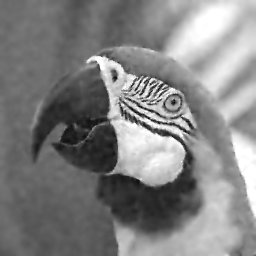}
	\end{minipage}

	\begin{minipage}[t]{0.23\textwidth}
	 \centering \scalebox{.65}{Bilevel weighted Huber TV} \\[-3pt] \scalebox{.65}{with scalar  $\gamma$} \\ \scalebox{.65}{PSNR=29.95, SSIM=0.8610}
	\end{minipage}
	\begin{minipage}[t]{0.23\textwidth}
	 \centering \scalebox{.65}{Bilevel weighted Huber TV} \\[-3pt] \scalebox{.65}{with spatially varying  $\gamma$} \\ \scalebox{.65}{PSNR=29.66, SSIM=0.8644}
	\end{minipage}
	\begin{minipage}[t]{0.23\textwidth}
	 \centering \scalebox{.65}{Bilevel weighted Huber TV$^{2}$}  \\[-3pt] \scalebox{.65}{with scalar  $\gamma$}  \\ \scalebox{.65}{PSNR=\textbf{30.27}, SSIM=\textbf{0.8736}}
	\end{minipage}
	\begin{minipage}[t]{0.23\textwidth}
	 \centering \scalebox{.65}{Bilevel weighted Huber  TV$^{2}$}  \\[-3pt] \scalebox{.65}{with spatially varying  $\gamma$}  \\ \scalebox{.65}{PSNR=30.19, SSIM=0.8651}
	\end{minipage}

	\begin{minipage}[t]{0.23\textwidth}
	\includegraphics[width=0.95\textwidth, trim={0cm 3.5cm 0cm 0cm},clip]{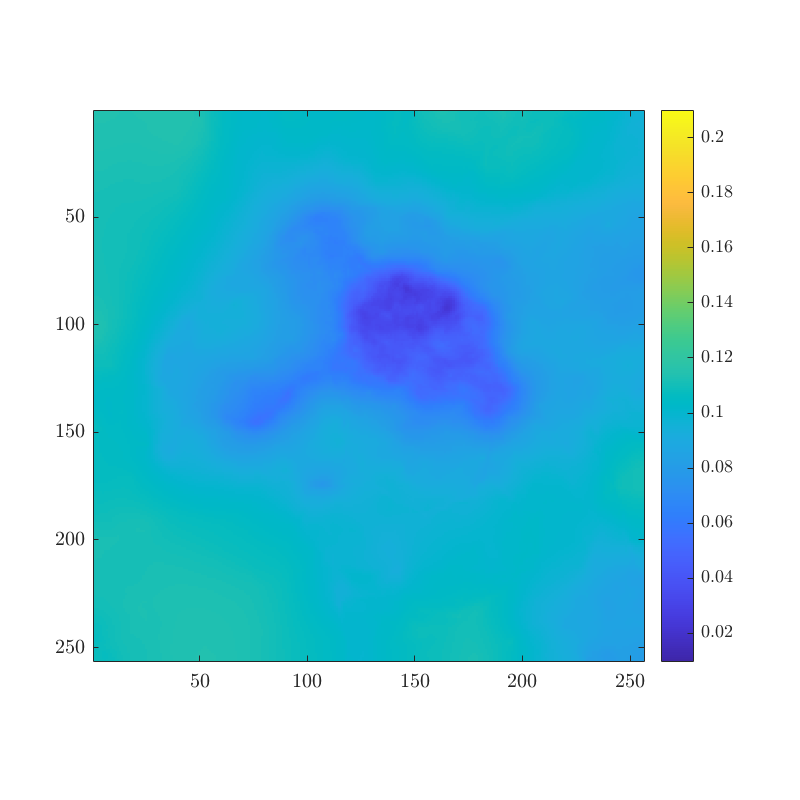}
	\end{minipage}	
	\begin{minipage}[t]{0.23\textwidth}
	\includegraphics[width=0.95\textwidth, trim={0cm 3.5cm 0cm 0cm},clip]{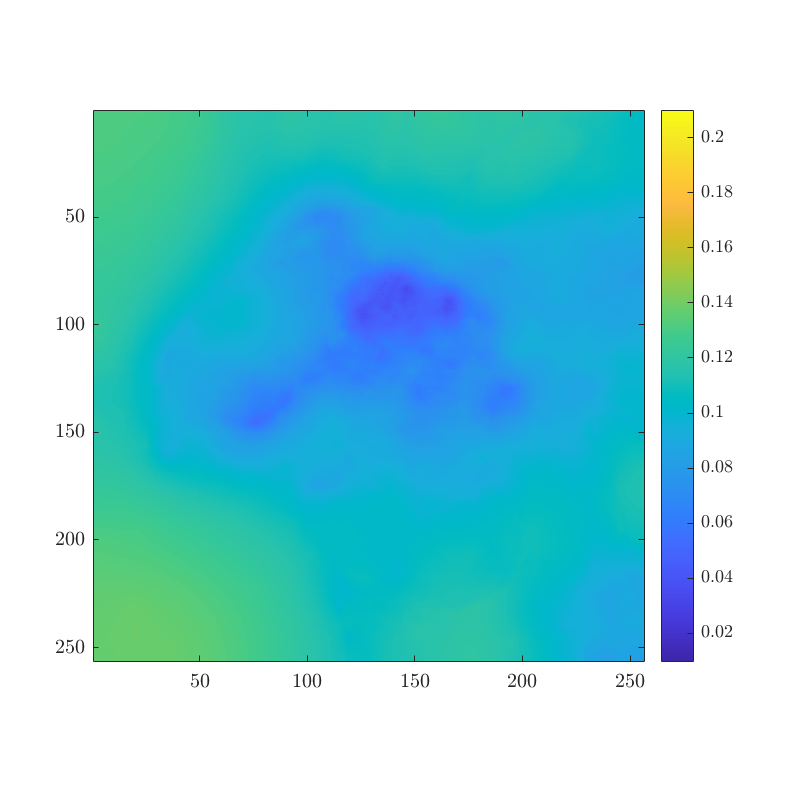}
	\end{minipage}		
	  \begin{minipage}[t]{0.23\textwidth}
	\includegraphics[width=0.95\textwidth, trim={0cm 3.5cm 0cm 0cm},clip]{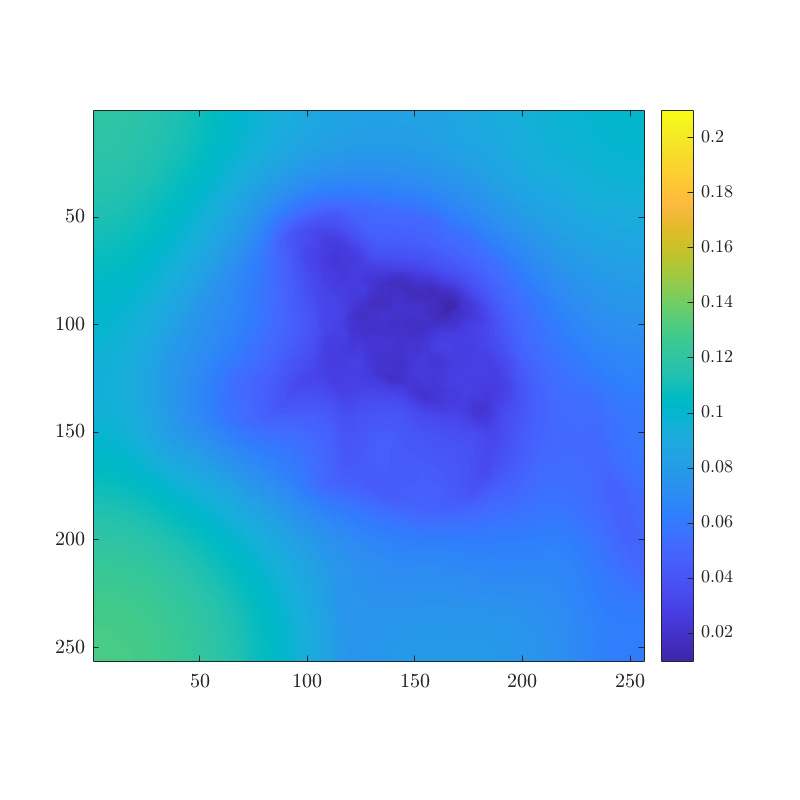}
	\end{minipage}
	  \begin{minipage}[t]{0.23\textwidth}
	\includegraphics[width=0.95\textwidth, trim={0cm 3.5cm 0cm 0cm},clip]{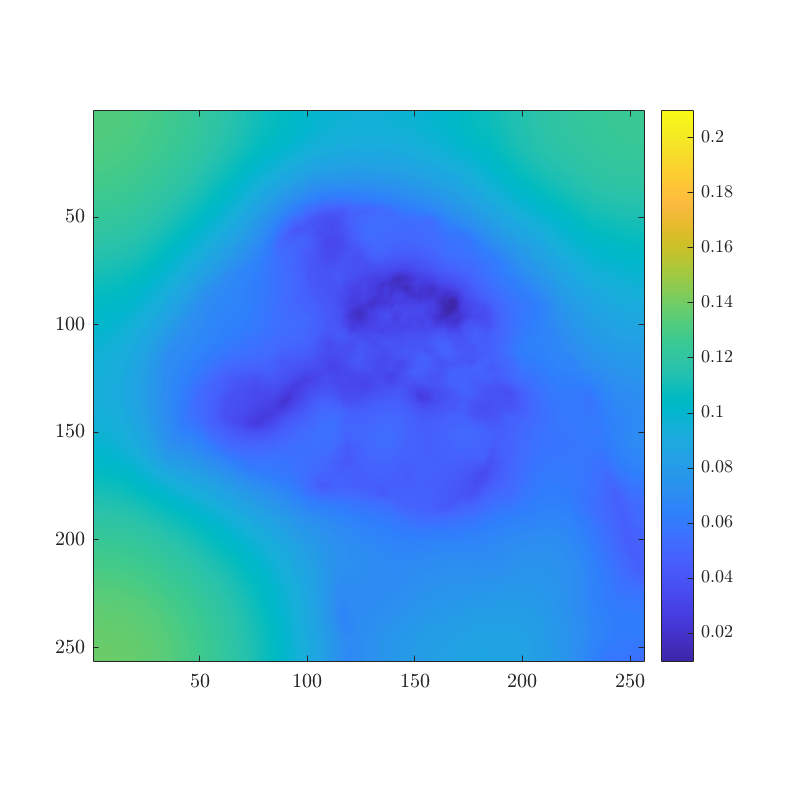}
	\end{minipage}

	\begin{minipage}[t]{0.23\textwidth}
	\centering \scalebox{.65}{Weight $\alpha$ of Huber TV}
	\end{minipage}
	\begin{minipage}[t]{0.23\textwidth}
	\centering \scalebox{.65}{Weight $\alpha$ of Huber TV}
	\end{minipage}	
	\begin{minipage}[t]{0.23\textwidth}
\centering \scalebox{.65}{Weight $\alpha$ of Huber TV$^{2}$}
	\end{minipage}
	\begin{minipage}[t]{0.23\textwidth}
\centering \scalebox{.65}{Weight $\alpha$ of Huber TV$^{2}$}
	\end{minipage}
	
	\vspace{1em}
	\begin{minipage}[t]{0.23\textwidth}
	\includegraphics[width=0.95\textwidth, trim={0cm 0cm 0cm 0cm},clip]{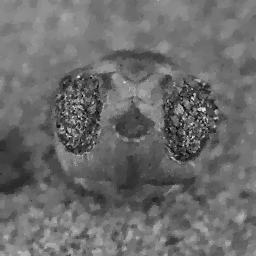}
	\end{minipage}
	\begin{minipage}[t]{0.23\textwidth}
	\includegraphics[width=0.95\textwidth, trim={0cm 0cm 0cm 0cm},clip]{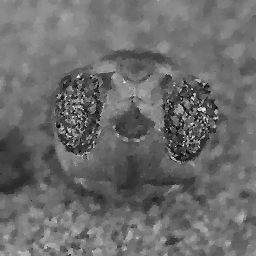}
	\end{minipage}
	\begin{minipage}[t]{0.23\textwidth}
	\includegraphics[width=0.95\textwidth, trim={0cm 0cm 0cm 0cm},clip]{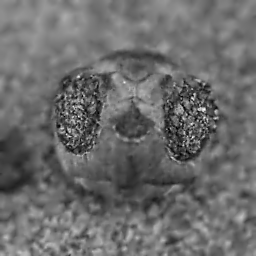}
	\end{minipage}
	\begin{minipage}[t]{0.23\textwidth}
	\includegraphics[width=0.95\textwidth, trim={0cm 0cm 0cm 0cm},clip]{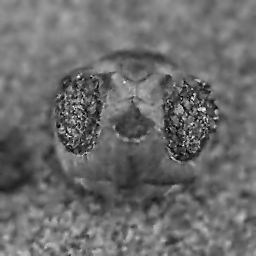}
	\end{minipage}
	
	\begin{minipage}[t]{0.23\textwidth}
	 \centering \scalebox{.65}{Bilevel weighted Huber TV}  \\[-3pt] \scalebox{.65}{with scalar  $\gamma$}  \\\scalebox{.65}{PSNR=28.23, SSIM=0.7979}
	\end{minipage}
	\begin{minipage}[t]{0.23\textwidth}
	 \centering \scalebox{.65}{Bilevel weighted  Huber TV}  \\[-3pt] \scalebox{.65}{with spatially varying  $\gamma$}  \\\scalebox{.65}{PSNR=27.86, SSIM=0.8016}
	\end{minipage}
	\begin{minipage}[t]{0.23\textwidth}
	 \centering \scalebox{.65}{Bilevel weighted  Huber TV$^{2}$}  \\[-3pt] \scalebox{.65}{with scalar  $\gamma$}  \\ \scalebox{.65}{PSNR=\textbf{29.09}, SSIM=\textbf{0.8494}}
	\end{minipage}
	\begin{minipage}[t]{0.23\textwidth}
	 \centering \scalebox{.65}{Bilevel weighted  Huber TV$^{2}$}  \\[-3pt] \scalebox{.65}{with spatially varying  $\gamma$}  \\ \scalebox{.65}{PSNR=28.96, SSIM=0.8492}
	\end{minipage}
		
	\begin{minipage}[t]{0.23\textwidth}
	\includegraphics[width=0.95\textwidth, trim={0cm 3.5cm 0cm 0cm},clip]{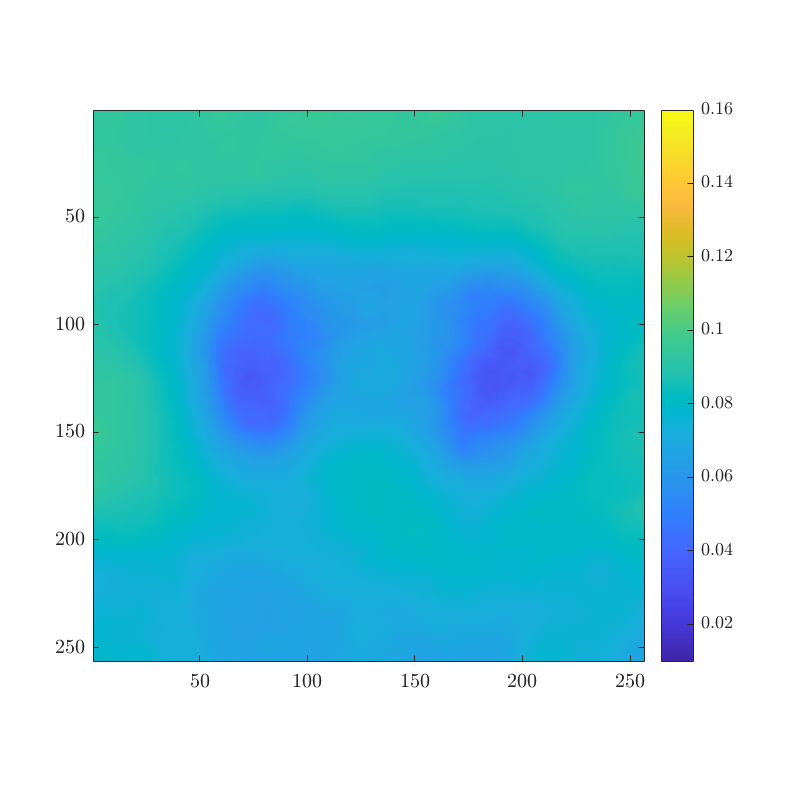}
	\end{minipage}	
	\begin{minipage}[t]{0.23\textwidth}
	\includegraphics[width=0.95\textwidth, trim={0cm 3.5cm 0cm 0cm},clip]{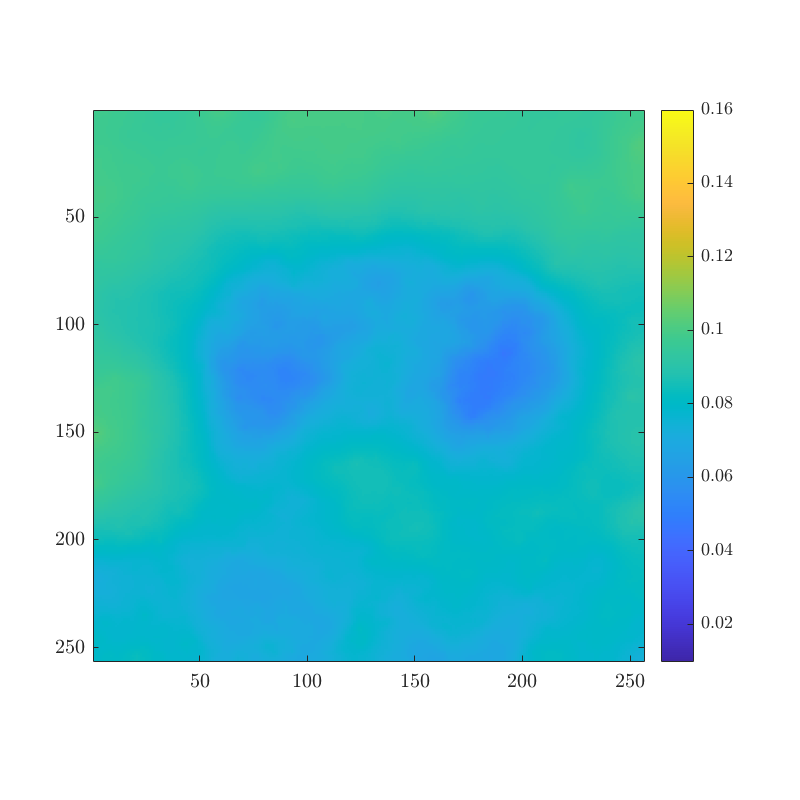}
	\end{minipage}	
	  \begin{minipage}[t]{0.23\textwidth}
	\includegraphics[width=0.95\textwidth, trim={0cm 3.5cm 0cm 0cm},clip]{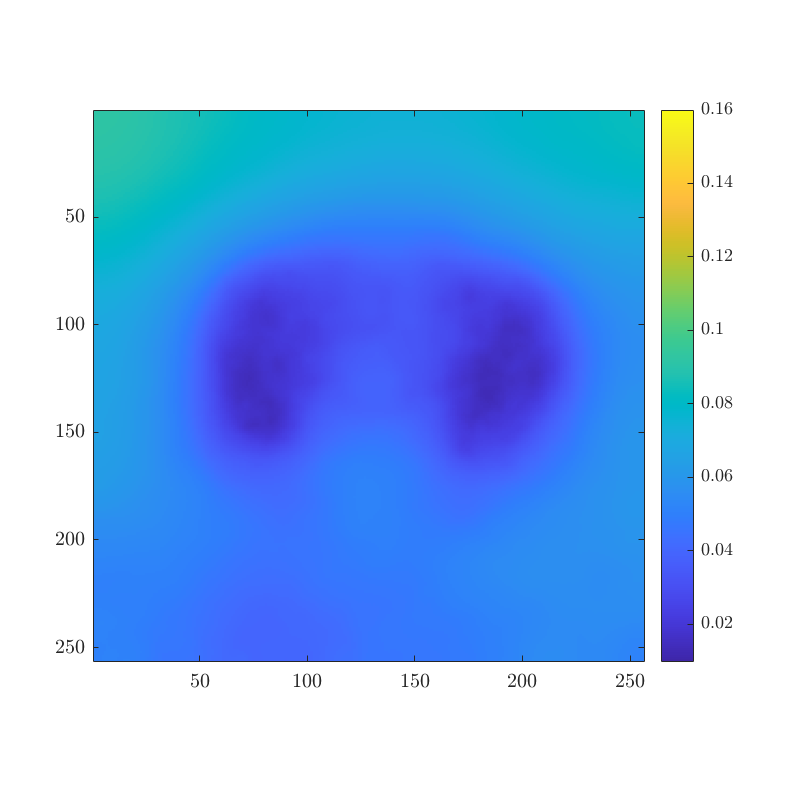}
	\end{minipage}
	\begin{minipage}[t]{0.23\textwidth}
	\includegraphics[width=0.95\textwidth, trim={0cm 3.5cm 0cm 0cm},clip]{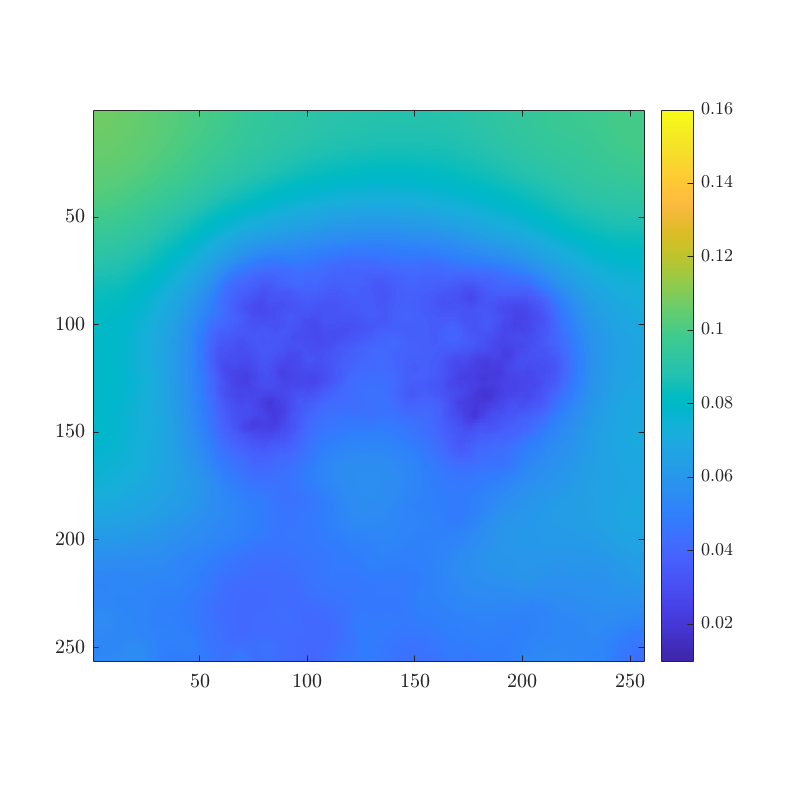}
	\end{minipage}
	
	\begin{minipage}[t]{0.23\textwidth}
	\centering \scalebox{.65}{Weight $\alpha$ of Huber TV}
	\end{minipage}
	\begin{minipage}[t]{0.23\textwidth}
	\centering \scalebox{.65}{Weight $\alpha$ of Huber TV}
	\end{minipage}
	\begin{minipage}[t]{0.23\textwidth}
\centering \scalebox{.65}{Weight $\alpha$ of Huber TV$^{2}$}
	\end{minipage}
	\begin{minipage}[t]{0.23\textwidth}
\centering \scalebox{.65}{Weight $\alpha$ of Huber TV$^{2}$}
	\end{minipage}	
	\caption{Weighted Huber TV and TV$^{2}$ denoising with spatially varying Huber parameter $\gamma$ and regularization parameter $\alpha$. The weights $\alpha$ are produced with the ground truth-based bilevel approach using $\mathcal{F}_{\PSNR}$. The highest PSNR and SSIM values are highlighted in bold font. }	
			\label{fig:psnr_opt}
\end{figure}

\begin{figure}[h!]
	\centering		
	\begin{minipage}[t]{0.48\textwidth}
	\includegraphics[width=0.97\textwidth, trim={0cm 0cm 0cm 0cm},clip]{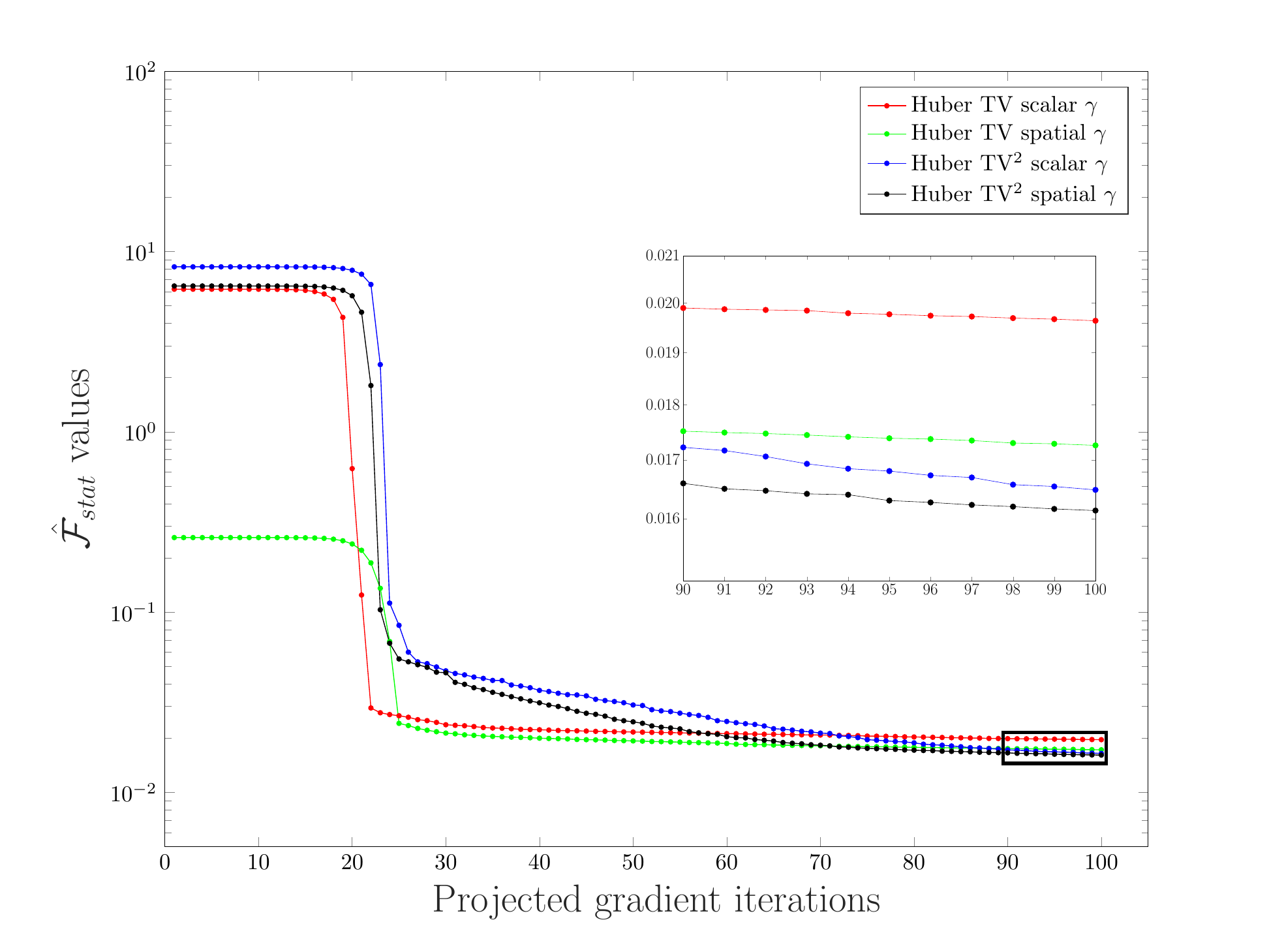}
	\end{minipage}
	\begin{minipage}[t]{0.48\textwidth}
	\includegraphics[width=0.97\textwidth, trim={0cm 0cm 0cm 0cm},clip]{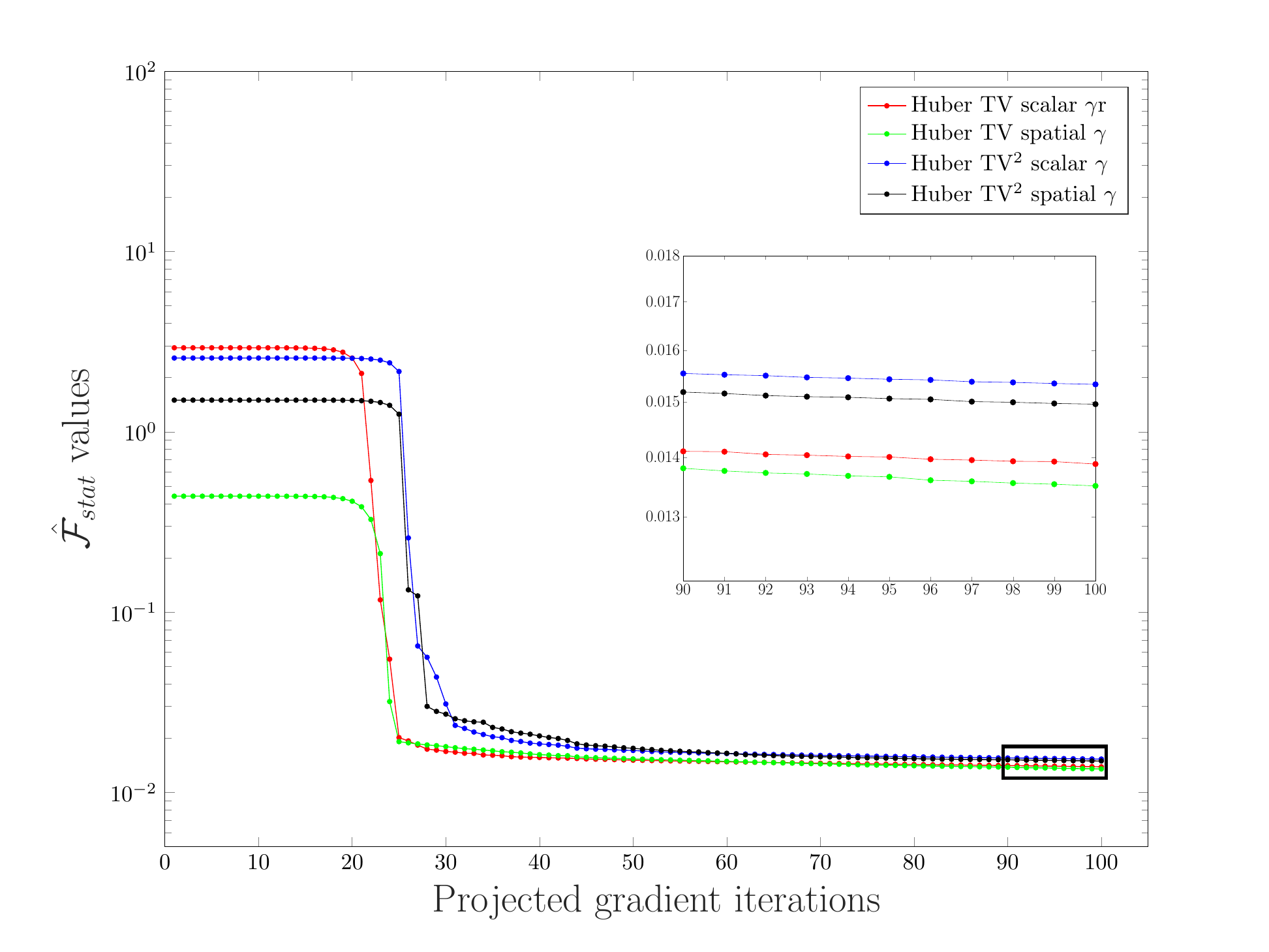}
	\end{minipage}\vspace{-0.5em}
	
	\begin{minipage}[t]{0.48\textwidth}
	\centering \scalebox{.65}{$\hat{\mathcal{F}}_{\stat}$ for \emph{parrot}}
	\end{minipage}
	\begin{minipage}[t]{0.48\textwidth}
	\centering \scalebox{.65}{$\hat{\mathcal{F}}_{\stat}$ for \emph{hatchling}}
	\end{minipage}
	
	\begin{minipage}[t]{0.48\textwidth}
	\includegraphics[width=0.97\textwidth, trim={0cm 0cm 0cm 0cm},clip]{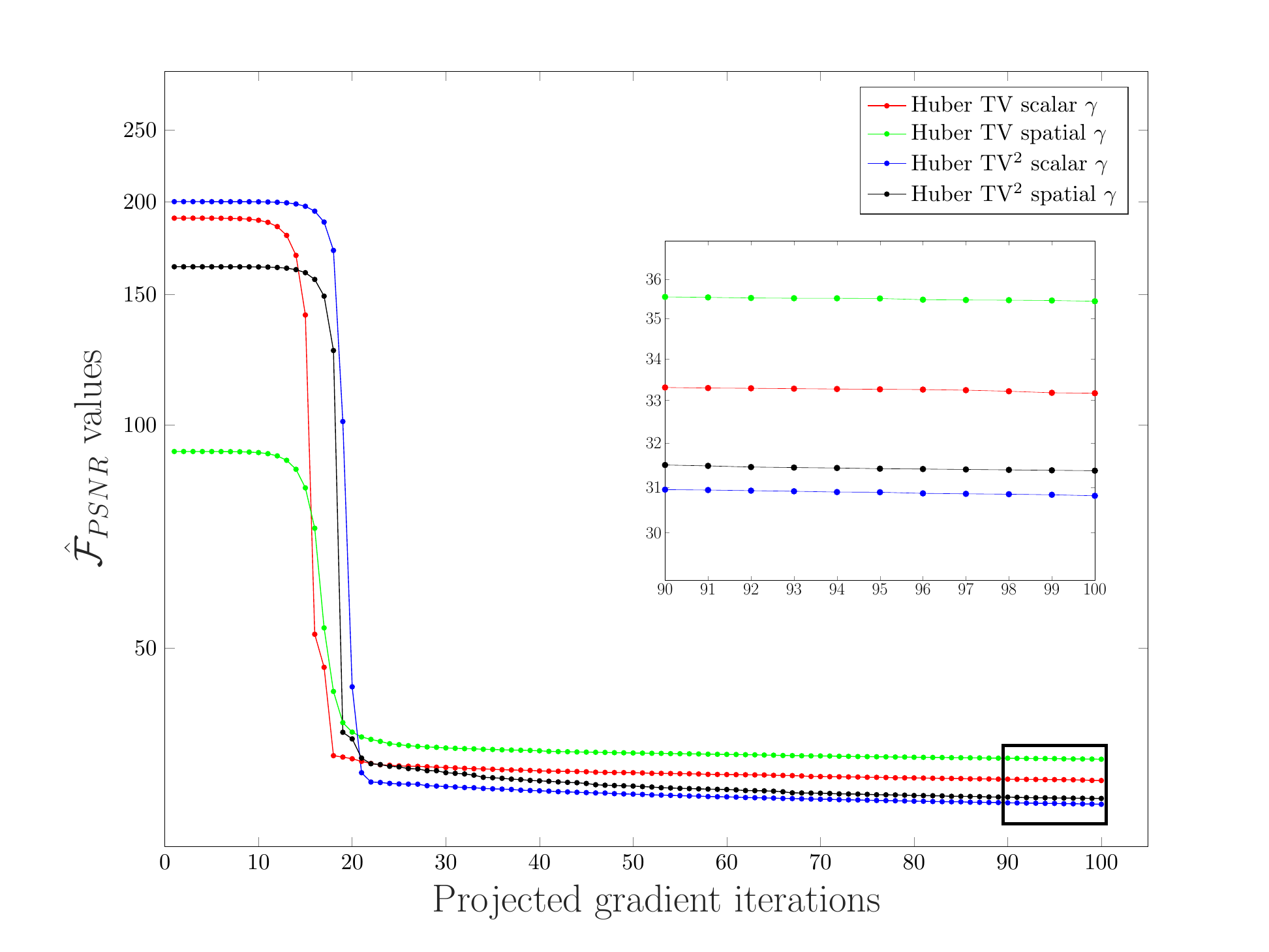}
	\end{minipage}
	\begin{minipage}[t]{0.48\textwidth}
	\includegraphics[width=0.97\textwidth, trim={0cm 0cm 0cm 0cm},clip]{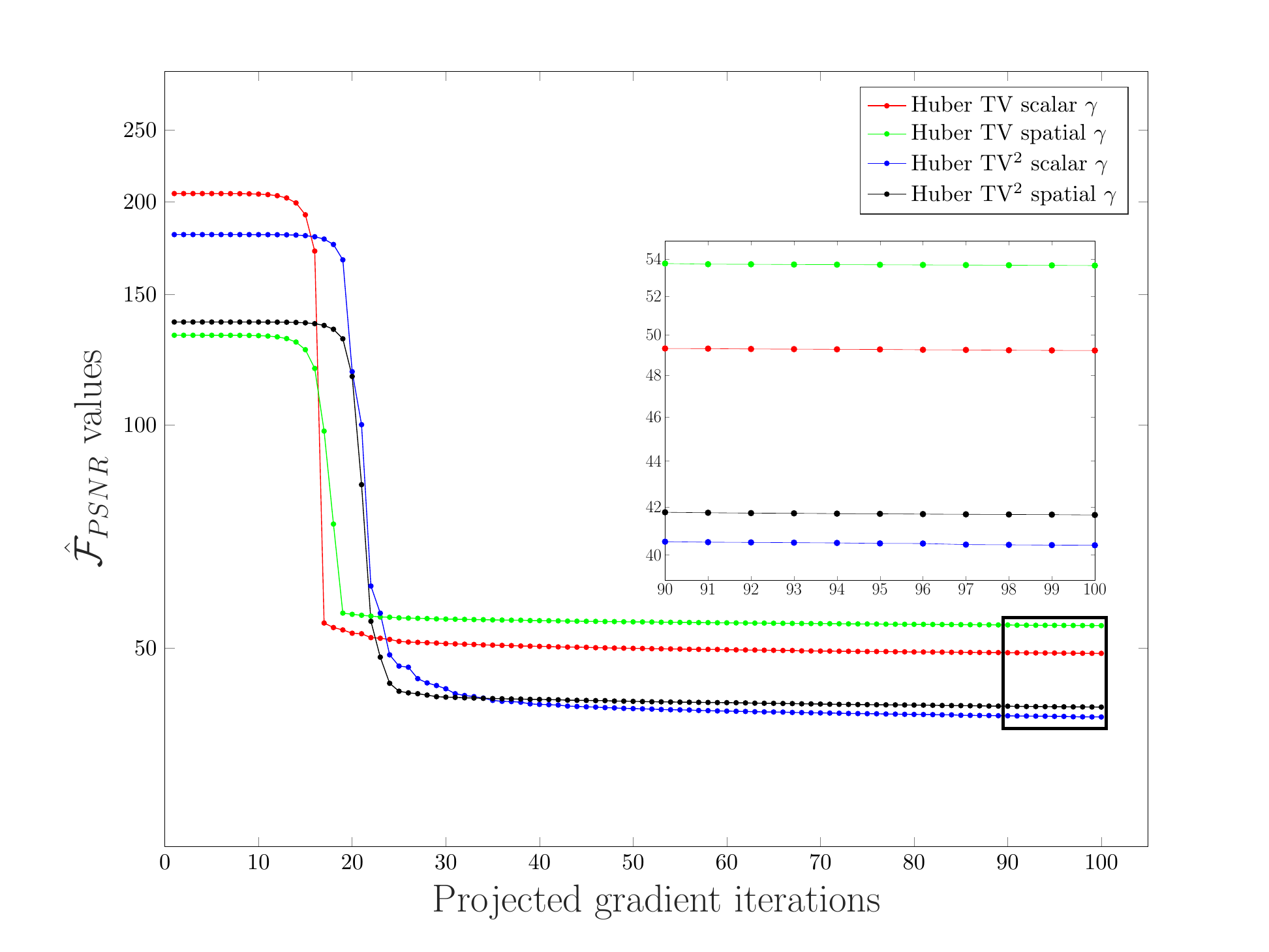}
	\end{minipage}\vspace{-0.5em}
	
	\begin{minipage}[t]{0.48\textwidth}
	\centering \scalebox{.65}{$\hat{\mathcal{F}}_{\PSNR}$ for \emph{parrot}}
	\end{minipage}
	\begin{minipage}[t]{0.48\textwidth}
	\centering \scalebox{.65}{$\hat{\mathcal{F}}_{\PSNR}$ for \emph{hatchling}}
	\end{minipage}
	\caption{Values of the reduced objective $\hat{\mathcal{F}}(u^{k})$ along the projected gradient iterations. The inner boxes show zoom-in plots of the last 10 iterations.}
		\label{fig:Fvalues}
\end{figure}	
	
\subsection{Numerical results}
In Figure \ref{fig:parrot_stat} we report our numerical results on the \emph{Parrot} image, see also Figure \ref{fig:parrot_stat_details} for zoom-in details. Here the spatially varying  regularization weights $\alpha$ are produced with the ground truth-free bilevel approach, i.e., using $\mathcal{F}_{\stat}$ as an upper lever objective. Among the regularizers with scalar parameters, second row, first three images, the best reconstruction both in terms of PSNR and SSIM is achieved by the scalar TGV. The bilevel weighted Huber TV reconstruction with scalar $\gamma$ is able to better preserve the details around the eye of the parrot, third row first image. When we use the spatially varying  $\gamma$, the details in that area become even more pronounced, compare the first two images in the third row of Figure \ref{fig:parrot_stat_details}. This is also accompanied with a slight increase of the SSIM index but also with a decrease in  PSNR. Observe that the weights $\alpha$ that are computed in these two cases are quite different, see first two images of the last row of Figure \ref{fig:parrot_stat}. The bilevel weighted Huber TV$^{2}$ approach produces similar reconstructions for both the scalar (slighly higher SSIM) and the spatially varying $\gamma$ case (slightly higher PSNR). These reconstructions are of very good quality and even outperform the weighted TGV in terms of SSIM, having also the same PSNR. This is due to the fact that the combination of the statistics-based upper level objective  and the second order TV is forcing the weight $\alpha$ to drop significantly in the detailed areas of the image, see the last two images of the last row of Figure \ref{fig:parrot_stat}. It is characteristic that while the PSNR of scalar TV$^{2}$ is only $0.03$ $\mathrm{dB}$ higher than the one of scalar TV, the PSNR of bilevel weighted huber TV$^{2}$ with scalar $\gamma$ is 0.61 $\mathrm{dB}$ higher compared to the corresponding huber TV result.	
	
The superiority of the bilevel weighted Huber TV$^{2}$ is even more evident in the second image example \emph{Hatchling}, Figures \ref{fig:hatchling_stat} and \ref{fig:hatchling_stat_details}. Here the reconstruction is more challenging due to the oscillatory nature of the ground truth image. The bilevel Huber TV$^{2}$ with scalar $\gamma$ gives by far the best result with respect to both PSNR and SSIM. Again, the automatically computed regularization weights $\alpha$ have much lower values in bilevel TV$^{2}$ than in bilevel TV, compare the first two versus the last two figures of the last row of Figure \ref{fig:hatchling_stat}. In this example, the spatially varying $\gamma$ leads to a reduction of PSNR and SSIM in all cases, but nevertheless also to more highlighted details in the eye area, see second and fourth images of the last row of Figure \ref{fig:hatchling_stat_details}.

In order to verify further the regularization capabilities of these regularizers, we make another series of experiments with these two example images, using the ground truth-based upper level objective $\mathcal{F}_{\PSNR}$, see Figure \ref{fig:psnr_opt}. In both images, the highest PSNR and SSIM is achieved by the bilevel weighted TV$^{2}$ with scalar Huber parameter $\gamma$, third images of first and third row, with the corresponding the regularization weight $\alpha$ having again smaller values compared to the TV one. Nevertheless, we observe that the spatially varying $\gamma$ results in higher SSIM in the Huber TV examples in both images, again with more pronounced features around the eye.

Finally in Figure \ref{fig:Fvalues}, we have plotted the values of the reduced objective $\hat{\mathcal{F}}(u^{k})$ along the projected gradient iterations, for  all bilevel Huber TV and  TV$^{2}$ examples. The top row shows these plots for the reduced  statistics-based upper level objective $\hat{\mathcal{F}}_{\stat}$. We observe that in both images, the introduction of the spatially varying $\gamma$ in both Huber TV and Huber TV$^{2}$ functionals, helps towards a further reduction of this objective, compare red versus green and blue versus black plots. We observed already that in some cases this is accompanied with a larger SSIM index and more pronounced details in the images, but in most cases the PSNR in decreased. This is in accordance with the plots of the second row, where we see that  the reduced  PSNR-maximizing upper level objective $\hat{\mathcal{F}}_{\PSNR}$ is not further decreased by the introduction of the spatially varying $\gamma$, compare again  the red versus green and blue versus black plots.

We conclude that the bilevel Huber TV$^{2}$ is able to produce remarkably good results. This is perhaps even surprising as the use of its scalar version is not that popular due to its inability to preserve sharp edges.  
We showed that the use of a spatially varying Huber parameter $\gamma$ can result in improved results both quantitatively and qualitatively, thus justifying our rigorous analytical study
on spatially inhomogeneous integrands acting on TV-type regularizers. We also stress that by no means our strategy for setting $\gamma$ is necessarily the optimal one. In fact, future work will involve setting up a bilevel framework where also this parameter is included in the upper level minimization variables along with the  parameter $\alpha$, adding further flexibility to the regularization process.

\appendix 
\section{Convex integrands and generalized Young measures}\label{sec:YM}

This section loosely follows \cite{KrRa_notes}, where most proofs can be found. Let $\Omega\subset\R^n$ be a bounded open set with $\mathscr{L}^n(\partial\Omega)=0$ and consider the space of integrands
$$
\mathbb{E}(\Omega,\mathbb{V})\coloneqq \left\{\Phi\in\hold(\Omega\times\mathbb{V})\colon\Phi^\infty(x,z)\coloneqq \lim_{t\rightarrow\infty,\,x^\prime\rightarrow x}\frac{\Phi(x^\prime,tz)}{t}\in \R\text{ uniformly in }\bar\Omega\times S_\V \right\},
$$
which is naturally equipped with the norm
$$
\|\Phi\|_{\mathbb{E}}\coloneqq \sup_{(x,z)\in\Omega\times\V}\dfrac{|\Phi(x,z)|}{1+|z|}.
$$
It will thus be convenient to work with the  coordinate transformations
$$
S\colon\hat{z}\in B_\V\mapsto \frac{\hat{z}}{1-|\hat z|}\in\mathbb{V},
\quad 
S^{-1}\colon z\in\mathbb{V}\mapsto \frac{{z}}{1+| z|}\in B_\V,
$$
where we wrote $B_\V$ to denote the open unit ball in $\mathbb{V}$. With this notation, the space of integrands $\mathbb{E}(\Omega,\mathbb{V})$ can be identified with $\hold(\overline{\Omega\times B_\V})$ via the linear isometric isomorphism
$$
(T\Phi)(x,\hat z)\coloneqq(1-|\hat z|)\Phi \left(x,\frac{\hat z}{1-|\hat z|}\right), \quad\text{for }x\in\Omega,\,\hat z\in B_\V.
$$
It follows that its adjoint, $T^*\colon\mathbb E(\Omega,\mathbb{V})^*\rightarrow \hold(\overline{\Omega\times B_\V})^*\cong \mathcal{M}(\overline{\Omega\times B_\V})$ is also a linear isometric isomorphism. We embed $\mathcal M(\Omega,\mathbb V)$ into $\mathbb{E}^*$ via
\begin{align*}
\eps_\mu(\Phi)\coloneqq\int_{\Omega}\Phi(x,\dif \mu)=\int_\Omega\Phi(x,\mu^a(x))\dif x+\int_\Omega\Phi^\infty\left(x,\frac{\dif\mu^s}{\dif \mu}(x)\right)\dif|\mu|(x),
\end{align*}
where $\mu=\mu^a\mathscr{L}^n\mres\Omega+\mu^s$ is the Radon--N\'ykodim decomposition of $\mu\in\mathcal M(\Omega,\V)$. By the sequential Banach--Alaoglu theorem, we can infer that bounded $L^p$ sequences are weakly-* compact in $\mathbb{E}^*$ under the above identification. In particular, if $(\mu_j)$ is bounded in $\mathcal M(\Omega,\V)$, we know that along a subsequence we have $\eps_{\mu_j}\wstar \bm\nu$ in $\mathbb{E}(\Omega,\mathbb{V})^*$. We define $\sigma\coloneqq(T^{-1})^*\bm\nu\in\mathcal{M}(\overline{\Omega\times B_\V})$ and write for $\Phi\in \mathbb{E}(\Omega,\V)$
\begin{align*}
\llangle \Phi,\bm\nu\rrangle&\coloneqq\langle\Phi,\bm\nu\rangle_{\mathbb{E},\mathbb{E}^*}=\langle T\Phi, \sigma \rangle\\
&=\int_{\bar\Omega\times B_\V }(1-|\hat z|)^p\Phi \left(x,\frac{\hat z}{1-|\hat z|}\right)\dif\sigma(x.\hat z)+\int_{\bar\Omega\times S_\V}\Phi^\infty(x,\hat z)\dif \sigma(x,\hat z).
\end{align*}
From this formula we derive two necessary conditions for the weakly-* limits of $\eps_{\mu_j}$, namely that $\sigma\geq 0 \text{ in the sense of }\mathcal{M}(\overline{\Omega\times B_\V})$ and
\begin{align}\label{eq:nec_cond}
\int_{\Omega}\varphi(x)\dif x=\int_{\bar\Omega\times B_\V}\varphi(x)(1-|\hat z|)\dif \sigma(x,\hat z)\text{ for all }\varphi\in\hold(\bar\Omega).
\end{align}
Conversely, these conditions are sufficient to enable us to disintegrate $\sigma$ into appropriately parametrized (generalized Young) measures that detect both oscillation and concentration behavior of a weakly-* convergent sequence $(\mu_j)$. We define:
\begin{definition}
	A parametrized measure $\bm\nu=\left((\nu_x)_{x\in\Omega},\,\lambda,\,(\nu_x^\infty)_{x\in\bar\Omega}
	\right)$ is said to be a \emph{Young measure} (or \emph{generalized Young measure}) whenever 
	\begin{enumerate}
		\item $(\nu_x)_{x\in\Omega}\subset\mathcal{M}^+_1(\mathbb{V})$ is weakly-* $\mathscr{L}^n$-measurable (the \emph{oscillation measure}).
		\item $\lambda\in\mathcal{M}^+(\bar\Omega)$ (the \emph{concentration measure}).
		\item $(\nu_x^\infty)_{x\in\bar\Omega}\subset\mathcal{M}^+_1(\mathbb{V})$ is weakly-* $\lambda$-measurable (the \emph{concentration-angle measure}).
		\item $\int_{\Omega}\int_{\mathbb{V}}|z|\dif \nu_x(z)\dif x<\infty$ (the \emph{moment condition} holds).
	\end{enumerate}
	Then $\bm\nu$ acts linearly on $\mathbb{E}(\Omega,\mathbb{V})$ via
	$$
	\llangle \Phi, \bm\nu \rrangle\coloneqq\int_{\Omega }\int_{\mathbb{V}} \Phi(x,\,\bigcdot\,)\dif\nu_x\dif x+\int_{\bar\Omega}\int_{ S_\V} \Phi^\infty(x,\,\bigcdot\,)\dif\nu_x^\infty\dif\lambda(x) \quad\text{for } \Phi\in\mathbb{E}(\Omega,\mathbb V).
	$$
	We write $\mathrm{Y}(\Omega,\mathbb{V})$  for the set of all such $\bm\nu$.
\end{definition}
It is then easy to check that a Young measure $\bm\nu$ actually lies in $\mathbb{E}^*$ and, moreover, that the inclusion $\mathrm{Y}(\Omega,\mathbb{V})\subset\mathbb{E}(\Omega,\mathbb{V})^*$ is strict. We have the disintegration theorem:
\begin{theorem}\label{thm:disintegration}	$\mathrm{Y}(\Omega,\mathbb{V})=T^*\{\sigma\in\mathcal{M}^+(\overline{\Omega\times B_\V})\colon\text{ equation \eqref{eq:nec_cond} holds} \}$.
\end{theorem}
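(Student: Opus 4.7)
The plan is to establish the two set-theoretic inclusions separately, exploiting the fact that $T^*$ is a linear isometric isomorphism $\mathbb{E}(\Omega,\V)^*\to\mathcal{M}(\overline{\Omega\times B_\V})$, so it suffices to describe the image $T^*\mathrm Y(\Omega,\V)$ inside $\mathcal{M}(\overline{\Omega\times B_\V})$.

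For the inclusion ``$\supseteq$'', I would start from $\bm\nu\in\mathrm Y(\Omega,\V)$ and construct $\sigma$ explicitly: its interior part $\sigma^\circ$ on $\bar\Omega\times B_\V$ is the pushforward of $(1+|z|)\,\dif\nu_x(z)\,\dif x$ under the map $(x,z)\mapsto(x,S^{-1}(z))$, and its boundary part $\sigma^\partial$ on $\bar\Omega\times S_\V$ is the parametrized product $\lambda\otimes\nu_x^\infty$. Positivity is immediate, and the moment condition for $\bm\nu$ gives $\sigma^\circ(\bar\Omega\times B_\V)<+\infty$. Using $1-|S^{-1}(z)|=1/(1+|z|)$, testing with $\Phi(x,z)\coloneqq\varphi(x)$ (whose recession vanishes) recovers \eqref{eq:nec_cond}, and applying the same substitution to a general $\Phi\in\mathbb{E}(\Omega,\V)$ gives $T^*\sigma=\bm\nu$.

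For the inclusion ``$\subseteq$'', I would argue by disintegration. Given $\sigma\geq 0$ satisfying \eqref{eq:nec_cond}, split $\sigma=\sigma^\circ+\sigma^\partial$ along the Borel partition $\overline{\Omega\times B_\V}=(\bar\Omega\times B_\V)\sqcup(\bar\Omega\times S_\V)$. Testing \eqref{eq:nec_cond} against $\varphi\in\hold(\bar\Omega)$ identifies the projection onto $\bar\Omega$ of the weighted measure $(1-|\hat z|)\,\dif\sigma^\circ$ as $\mathscr{L}^n\mres\bar\Omega$; combined with $\mathscr{L}^n(\partial\Omega)=0$, this forces $\sigma^\circ(\partial\Omega\times B_\V)=0$, so the projected measure is really $\mathscr{L}^n\mres\Omega$. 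The disintegration theorem on the compact metric space $\bar\Omega\times\bar B_\V$ then yields a weakly-$*$ $\mathscr{L}^n$-measurable family $(\tilde\nu_x)_{x\in\Omega}\subset\mathcal{M}^+_1(B_\V)$ with
\begin{equation*}
\int_{\bar\Omega\times B_\V}(1-|\hat z|)f(x,\hat z)\,\dif\sigma^\circ=\int_\Omega\int_{B_\V}f(x,\hat z)\,\dif\tilde\nu_x(\hat z)\,\dif x\quad\text{for every Borel }f.
\end{equation*}
Setting $\nu_x\coloneqq S_*\tilde\nu_x\in\mathcal{M}^+_1(\V)$, the identity $|S(\hat z)|=|\hat z|/(1-|\hat z|)$ gives $\int_\Omega\int_\V|z|\,\dif\nu_x(z)\,\dif x=\int_{\bar\Omega\times B_\V}|\hat z|\,\dif\sigma^\circ<+\infty$, so the moment condition holds. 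Disintegrating $\sigma^\partial$ with respect to its projection $\lambda\coloneqq(\pi_{\bar\Omega})_*\sigma^\partial\in\mathcal{M}^+(\bar\Omega)$ yields the concentration-angle family $(\nu_x^\infty)_{x\in\bar\Omega}\subset\mathcal{M}^+_1(S_\V)$ in exactly analogous fashion.

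The identity $T^*\sigma=\bm\nu$ for the resulting triple then follows by combining the two disintegration formulas with the change of variables $z=S(\hat z)$ applied to $T\Phi(x,\hat z)=(1-|\hat z|)\Phi(x,S(\hat z))$ on $\bar\Omega\times B_\V$ and to $T\Phi=\Phi^\infty$ on $\bar\Omega\times S_\V$. I expect the one genuinely subtle point to be precisely the role of \eqref{eq:nec_cond}: without it, the base measure of the interior disintegration would merely be some positive measure on $\bar\Omega$, and the family $\nu_x$ thus produced would not correspond to the oscillation measure of a Young measure in the sense of the definition. Everything else reduces to applications of the disintegration theorem on a compact metric space (which automatically supplies the required weak-$*$ measurability) or to routine changes of variables.
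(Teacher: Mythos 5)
Your argument is correct and is essentially the standard proof of this disintegration theorem; the paper itself states the result without proof, deferring to \cite{KrRa_notes}, where the same two constructions are carried out — in one direction the pushforward of $(1+|z|)\,\dif\nu_x(z)\,\dif x$ under $(x,z)\mapsto(x,S^{-1}(z))$ together with the parametrized product $\lambda\otimes\nu_x^\infty$ on $\bar\Omega\times S_\V$, and in the other the disintegration of $(1-|\hat z|)\sigma\mres(\bar\Omega\times B_\V)$ over its Lebesgue marginal (which is where \eqref{eq:nec_cond} enters, exactly as you identify) and of $\sigma\mres(\bar\Omega\times S_\V)$ over $\lambda$. The only cosmetic slip is that your two inclusion labels are interchanged: constructing $\sigma$ from a given $\bm\nu$ proves $\mathrm{Y}(\Omega,\V)\subseteq T^*\{\cdots\}$, while the disintegration step proves the reverse containment.
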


Consequently, $\mathrm{Y}(\Omega,\mathbb{V})$ is weakly-* closed in $\mathbb{E}(\Omega,\mathbb{V})^*$ and convex, therefore:
\begin{theorem}[Fundamental Theorem of Young measures]\label{thm:ftym}
	Let $(\mu_j)$ be a bounded sequence in $\mathcal M(\Omega,\mathbb{V})$. Then there exists $\bm\nu\in\mathrm{Y}(\Omega,\mathbb V)$ such that, along a subsequence, $\eps_{\mu_j}\wstar \bm\nu$ in $\mathbb{E}(\Omega,\mathbb{V})^*$, i.e.,
	$$
	\lim_{j\rightarrow\infty}\int_{\Omega}\Phi(x,\dif \mu_j)=\int_{\Omega }\int_{\mathbb{V}} \Phi(x,z)\dif\nu_x(z)\dif x+\int_{\bar\Omega}\int_{ S_\V} \Phi^\infty(x,z)\dif\nu_x^\infty(z)\dif\lambda(x)
	$$
	for all $\Phi\in\mathbb{E}(\Omega,\mathbb V)$. In this case, we say that \emph{$(\mu_j)$ generates $\bm\nu$}.
\end{theorem}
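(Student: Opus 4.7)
The plan is to apply the sequential Banach--Alaoglu theorem on $\mathbb E(\Omega,\V)^*$, transport the extracted weak-$\ast$ limit to $\mathcal M(\overline{\Omega\times B_\V})$ via $(T^{-1})^*$, and then invoke the disintegration result of Theorem \ref{thm:disintegration}. For the compactness step, the bound $|\eps_{\mu_j}(\Phi)| \leq \|\Phi\|_{\mathbb E}\bigl(\mathscr L^n(\Omega) + |\mu_j|(\Omega)\bigr)$ follows directly from the definition of $\eps_{\mu_j}$ and of the $\mathbb E$-norm, so $(\eps_{\mu_j})$ is uniformly bounded in $\mathbb E(\Omega,\V)^*$. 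Since $T$ identifies $\mathbb E(\Omega,\V)$ isometrically with $\hold(\overline{\Omega\times B_\V})$, and the latter is separable because $\overline{\Omega\times B_\V}$ is a compact metric space, the domain $\mathbb E(\Omega,\V)$ is separable as well. Sequential Banach--Alaoglu therefore produces a subsequence, not relabeled, and a limit $\bm\nu\in\mathbb E(\Omega,\V)^*$ such that $\eps_{\mu_j}\wstar \bm\nu$.

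Set $\sigma\coloneqq (T^{-1})^*\bm\nu\in\mathcal M(\overline{\Omega\times B_\V})$. By Theorem \ref{thm:disintegration} it suffices to check that $\sigma\geq 0$ and that the compatibility identity \eqref{eq:nec_cond} is satisfied; then $\bm\nu = T^*\sigma\in \Y(\Omega,\V)$ is the sought Young measure. Nonnegativity of $\sigma$ is established by duality: for any $\psi\in \hold(\overline{\Omega\times B_\V})$ with $\psi\geq 0$, the preimage $\Phi\coloneqq T^{-1}\psi\in\mathbb E(\Omega,\V)$ is given explicitly by $\Phi(x,z) = (1+|z|)\,\psi\bigl(x, z/(1+|z|)\bigr)\geq 0$, with recession $\Phi^\infty(x,z) = |z|\,\psi(x, z/|z|)\geq 0$. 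Hence both summands in the definition of $\eps_{\mu_j}(\Phi)$ are nonnegative, so $\eps_{\mu_j}(\Phi)\geq 0$ for every $j$, and passing to the limit gives $\langle\psi,\sigma\rangle = \langle T\Phi,\sigma\rangle = \langle\Phi,\bm\nu\rangle \geq 0$.

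To obtain \eqref{eq:nec_cond}, specialize to the test integrand $\Phi(x,z)\coloneqq \varphi(x)$ with $\varphi\in\hold(\bar\Omega)$: this $\Phi$ lies in $\mathbb E(\Omega,\V)$ with $\Phi^\infty\equiv 0$, so $\eps_{\mu_j}(\Phi) = \int_\Omega\varphi\,\dif x$ is constant in $j$. On the other hand, since $T\Phi(x,\hat z) = (1-|\hat z|)\varphi(x)$, one has $\langle T\Phi,\sigma\rangle = \int_{\overline{\Omega\times B_\V}} (1-|\hat z|)\varphi(x)\,\dif\sigma(x,\hat z)$, and equating the two expressions yields \eqref{eq:nec_cond} verbatim. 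An application of Theorem \ref{thm:disintegration} then identifies $\bm\nu$ with a Young measure and concludes the argument. The essence of the proof is really absorbed by that theorem: the genuine technical work lies in disintegrating $\sigma$ into the triple $((\nu_x)_x, \lambda, (\nu_x^\infty)_x)$ and in ensuring the moment condition, whereas the pass to the weak-$\ast$ limit plus the two constraint verifications above are purely formal given its statement.
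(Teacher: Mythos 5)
Your proposal is correct and follows exactly the route the paper intends: uniform boundedness of $(\eps_{\mu_j})$ in $\mathbb{E}(\Omega,\V)^*$, sequential Banach--Alaoglu (justified by separability via the isometry $T$), and then verification of the two weak-$\ast$-stable constraints ($\sigma\geq 0$ and \eqref{eq:nec_cond}) so that Theorem \ref{thm:disintegration} identifies the limit as an element of $\Y(\Omega,\V)$. The paper compresses all of this into the remark that $\Y(\Omega,\V)$ is weakly-$\ast$ closed as a consequence of the disintegration theorem; your write-up simply makes that closedness argument explicit for the given sequence, so the two proofs are essentially the same.
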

In our analysis we repeatedly use the following:
\begin{lemma}\label{lem:barycentre}
Let $(\mu_j)\subset \mathcal M(\Omega,\mathbb{V})$ generate a Young measure $\bm\nu$. Then
$$
\mu_j\wstar \bar\nu_x\mathscr{L}^n\mres\Omega +\bar\nu_x^\infty\lambda \text{ in }\mathcal{M}(\bar\Omega.\V).
$$
The limit measure is refered to as the \emph{barycentre} of $\bm\nu$.
\end{lemma}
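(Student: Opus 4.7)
The plan is to deduce the asserted weak-$\ast$ convergence in $\mathcal{M}(\bar\Omega,\V)$ by testing the Fundamental Theorem of Young measures (Theorem~\ref{thm:ftym}) against a carefully chosen family of linear integrands, for which the abstract identity on $\mathbb{E}(\Omega,\V)^\ast$ collapses precisely to the duality pairing with $\hold(\bar\Omega,\V)$.

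First, I would check that the candidate limit measure $\bar\nu_x\mathscr{L}^n\mres\Omega+\bar\nu_x^\infty\lambda$ is well-defined as an element of $\mathcal{M}(\bar\Omega,\V)$. The moment condition $\int_\Omega\int_\V|z|\,\dif\nu_x\,\dif x<\infty$ ensures that $\bar\nu_x\coloneqq\int_\V z\,\dif\nu_x(z)$ is Lebesgue-integrable on $\Omega$; the concentration-angle measures $\nu_x^\infty$ are probability measures on $S_\V$, so $|\bar\nu_x^\infty|\leq 1$ and $\bar\nu_x^\infty$ is $\lambda$-integrable since $\lambda\in\mathcal{M}^+(\bar\Omega)$ is finite.

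Next, for an arbitrary test function $\phi\in\hold(\bar\Omega,\V)$, I would introduce the integrand
\[
\Phi(x,z)\coloneqq \langle \phi(x),z\rangle,\qquad (x,z)\in\Omega\times\V.
\]
Because $\phi$ is continuous on $\bar\Omega$ and $|\Phi(x,z)|\leq\|\phi\|_\infty(1+|z|)$, one has $\|\Phi\|_{\mathbb{E}}<\infty$. By homogeneity, the recession function exists and equals $\Phi^\infty(x,z)=\langle\phi(x),z\rangle$, so $\Phi\in\mathbb{E}(\Omega,\V)$.

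Now I would invoke Theorem~\ref{thm:ftym}: since $(\mu_j)$ generates $\bm\nu$,
\[
\lim_{j\to\infty}\int_\Omega\langle\phi(x),\dif\mu_j(x)\rangle
=\int_\Omega\int_\V\langle\phi(x),z\rangle\dif\nu_x(z)\,\dif x
+\int_{\bar\Omega}\int_{S_\V}\langle\phi(x),z\rangle\dif\nu_x^\infty(z)\,\dif\lambda(x).
\]
Interchanging integration with the linear pairing (Fubini is justified by the integrability noted above), the right-hand side equals
\[
\int_\Omega\langle\phi(x),\bar\nu_x\rangle\dif x+\int_{\bar\Omega}\langle\phi(x),\bar\nu_x^\infty\rangle\dif\lambda(x)
=\int_{\bar\Omega}\langle\phi,\dif(\bar\nu_x\mathscr{L}^n\mres\Omega+\bar\nu_x^\infty\lambda)\rangle,
\]
which is precisely the statement $\mu_j\wstar\bar\nu_x\mathscr{L}^n\mres\Omega+\bar\nu_x^\infty\lambda$ in $\mathcal{M}(\bar\Omega,\V)$.

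The only point requiring care is the admissibility of the test integrand $\Phi$; once that is in place, the rest is a direct application of the Fundamental Theorem and Fubini's theorem. No compactness or deeper measure-theoretic work is needed, since the Young measure framework has already encoded both the oscillation and concentration parts of the limit.
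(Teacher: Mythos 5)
Your proposal is correct and follows essentially the same route as the paper, which proves the lemma by testing the Fundamental Theorem of Young measures against the integrands $\Phi(x,z)=\varphi(x)z_i$ with $\varphi\in\hold(\bar\Omega)$; your choice $\Phi(x,z)=\langle\phi(x),z\rangle$ with $\phi\in\hold(\bar\Omega,\V)$ is the same idea packaged vectorially, and your preliminary checks (membership of $\Phi$ in $\mathbb{E}(\Omega,\V)$ and integrability of the barycentres) are sound.
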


This follows simply by taking $\Phi(x,z)=\varphi(x)z_i$ for $\varphi\in\hold(\bar{\Omega})$,    where we wrote $$\bar\nu_x=\int_\V z\dif\nu_x(z)\quad\text{and}\quad\bar\nu_x^\infty=\int_{B_\V} z\dif\nu^\infty_x(z)$$
for the expectations of the probability measures $\nu_x$ and $\nu_x^\infty$.

We also employ a general convergence result for Young measures:
\begin{proposition}{\cite[Prop. 2(i)]{KrRi}}\label{prop:lsc_YM}
Let $\Omega\subset\R^n$ be bounded and open, and $F\colon \Omega\times \V\rightarrow\R$ be a measurable integrand such that $f(x,\,\bigcdot\,)$ is continuous for almost every $x\in\Omega$ (a Carath\'eodory integrand). Suppose in addition that $f$ has a regular recession function, i.e., 
$$
f^\infty(x,z)\coloneqq \lim_{(x',z',t)\rightarrow(x,z,+\infty)}\frac{f(x',tz')}{t}\quad\text{for }(x,z)\in\bar\Omega\times\V
$$
exists. Let $(\mu_j)\subset\mathcal M(\Omega,\V)$ generate $\ym\in\Y(\Omega,\V)$. Then
$$
\lim_{j\rightarrow\infty} \int_{\Omega}f(x,\dif \mu_j(x))= \int_{\Omega}\langle\nu_x, f(x,\,\bigcdot\,)\rangle\dif x+\int_{\bar\Omega}\langle\nu_x^\infty, f^\infty(x,\,\bigcdot\,)\rangle\dif \lambda(x).
$$
\end{proposition}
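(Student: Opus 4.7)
The plan is to extend the fundamental theorem of Young measures (Theorem \ref{thm:ftym}), which gives the assertion when $f\in\mathbb{E}(\Omega,\V)$ is jointly continuous, to the Carath\'eodory setting by a Scorza--Dragoni approximation. Two elementary consequences of the hypotheses will be used throughout: the uniform existence of the regular recession function $f^\infty$ forces a linear growth bound $|f(x,z)|\leq C(1+|z|)$ with $C$ independent of $x\in\Omega$, and it also ensures that $f^\infty$ is jointly continuous on $\bar\Omega\times\V$.

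For each $\ell\in\mathbb{N}$ I would pick a compact $K_\ell\subset\Omega$, $K_\ell\subset K_{\ell+1}$, such that $f|_{K_\ell\times\V}$ is jointly continuous and $\mathscr{L}^n(\Omega\setminus K_\ell)<1/\ell$ (Scorza--Dragoni). By inner regularity and taking finite intersections, one can additionally arrange $\lambda(\bar\Omega\setminus K_\ell)<1/\ell$ and $\tau(\bar\Omega\setminus K_\ell)<1/\ell$, where $\tau$ is a weak-$\ast$ subsequential limit of $(|\mu_j|)$ in $\mathcal{M}(\bar\Omega)$ (which exists because $(\mu_j)$ is bounded in $\mathcal{M}(\Omega,\V)$, as can be seen from the moment condition applied to the generating sequence). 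Working in the compactified variable $\hat z=z/(1+|z|)\in\bar B_\V$ used in the appendix, in which the isometry $T$ turns $f$ into a bounded continuous function on $K_\ell\times\bar B_\V$, a Tietze extension produces $\tilde f_\ell\in\mathbb{E}(\Omega,\V)$ with $\tilde f_\ell\equiv f$ on $K_\ell\times\V$, $|\tilde f_\ell(x,z)|\leq C(1+|z|)$, and $\tilde f_\ell^\infty\equiv f^\infty$ on $K_\ell\times\V$. Theorem \ref{thm:ftym} applied to $\tilde f_\ell$ then yields $\int_\Omega\tilde f_\ell(x,\dif\mu_j)\to\llangle\tilde f_\ell,\bm\nu\rrangle$ as $j\to\infty$.

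The errors introduced by swapping $f$ for $\tilde f_\ell$ on both sides are controlled by the linear growth bound and the coincidence on $K_\ell\times\V$:
\begin{align*}
\Big|\int_\Omega(f-\tilde f_\ell)(x,\dif\mu_j)\Big| &\leq 2C\big(\mathscr{L}^n(\Omega\setminus K_\ell)+|\mu_j|(\Omega\setminus K_\ell)\big),\\
\big|\llangle f-\tilde f_\ell,\bm\nu\rrangle\big| &\leq 2C\Big(\int_{\Omega\setminus K_\ell}\langle\nu_x,1+|\,\bigcdot\,|\rangle\,\dif x+\lambda(\bar\Omega\setminus K_\ell)\Big).
\end{align*}
Both right-hand sides tend to zero as $\ell\to\infty$: the second by the moment condition on $\bm\nu$, the finiteness of $\lambda$, and the choice of $K_\ell$; the first because $\limsup_{j\to\infty}|\mu_j|(\Omega\setminus K_\ell)\leq\tau(\bar\Omega\setminus\mathrm{int}\,K_\ell)\to 0$ along the subsequence for which $|\mu_j|\wstar\tau$. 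A triangle-inequality diagonalization, letting first $j\to\infty$ and then $\ell\to\infty$, delivers the identity along the extracted subsequence; since the limit depends only on $\bm\nu$, it does not depend on the subsequence and the full limit exists.

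The main obstacle will be the simultaneous control of the exceptional set $\Omega\setminus K_\ell$ for the three measures $\mathscr{L}^n$, $\lambda$, and $\tau$; this is handled by intersecting one Scorza--Dragoni compact with two compact approximations produced by inner regularity. A secondary technical point is the Tietze-type extension, which must preserve the recession structure; this is most naturally performed in the compactified variable $\hat z$, where the recession information is encoded as continuous boundary values on $K_\ell\times S_\V$ and Tietze applies verbatim to a bounded continuous function on the compact product $K_\ell\times\bar B_\V$.
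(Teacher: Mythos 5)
The paper does not actually prove this proposition: it is imported verbatim from \cite[Prop.\ 2(i)]{KrRi}, so there is no internal proof to compare against and your argument must stand on its own. It does not, and the gap is in the treatment of the Scorza--Dragoni exceptional set. You claim that by inner regularity and finite intersections one can arrange $\lambda(\bar\Omega\setminus K_\ell)<1/\ell$ and $\tau(\bar\Omega\setminus K_\ell)<1/\ell$ in addition to $\mathscr{L}^n(\Omega\setminus K_\ell)<1/\ell$. This is false: intersecting compacts only enlarges the exceptional set, and $\lambda$ and $\tau$ are in general singular with respect to $\mathscr{L}^n$ --- concentration is exactly what they record --- so they may carry a fixed positive mass on $\Omega\setminus K_\ell$ for every $\ell$ (take $\lambda=\delta_{x_0}$ with $x_0$ a genuine discontinuity point of $f(\,\bigcdot\,,z)$, hence excluded from every Scorza--Dragoni compact; note also that $\lambda$ may charge $\partial\Omega$, which no compact subset of $\Omega$ reaches). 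The same defect sinks your first error estimate: bounding $|f-\tilde f_\ell|$ by $2C(1+|z|)$ on $(\Omega\setminus K_\ell)\times\V$ forces you to control $\limsup_j|\mu_j|(\Omega\setminus K_\ell)$, which tends to zero only if $(|\mu_j|)$ is uniformly integrable --- precisely the hypothesis that generalized Young measures are designed to avoid. A purely absolutely continuous sequence concentrating at a discontinuity point of $f(\,\bigcdot\,,z)$ already defeats the estimate, even though the conclusion of the proposition remains true for it.

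The fix is to split $f=f^\infty+h$ \emph{before} approximating, rather than extending all of $f$ at once. Regularity of the recession limit makes $f^\infty$ jointly continuous and positively $1$-homogeneous, hence an element of $\mathbb{E}(\Omega,\V)$ to which Theorem \ref{thm:ftym} applies directly; the remainder $h=f-f^\infty$ is Carath\'eodory with $h^\infty\equiv 0$, which yields the uniform sublinearity $|h(x,z)|\leq\epsilon|z|+C_\epsilon$. That bound annihilates all singular and concentration contributions and renders $(h(\,\bigcdot\,,\mu_j^a))_j$ uniformly integrable, so exceptional sets of small \emph{Lebesgue} measure genuinely carry small error; this is exactly what Proposition \ref{prop:cts_YM} packages, and it is the route the cited reference follows. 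Your compactification-plus-Tietze device is a reasonable way to extend continuous integrands (and, if you insist on it, the extension should at least be arranged so that $\tilde f_\ell^\infty=f^\infty$ on all of $\bar\Omega$, which is possible since $f^\infty$ is globally continuous), but applied to the full integrand it cannot exploit the fact that only the sublinear part of $f$ is allowed to be discontinuous in $x$.
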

Finally, we cite a variant of the fundamental theorem of Young measures:
\begin{proposition}\label{prop:cts_YM}
Let $\Omega\subset\R^n$ be bounded and open, and $F\colon \Omega\times \V\rightarrow\R$ be a measurable integrand such that $f(x,\,\bigcdot\,)$ is continuous for almost every $x\in\Omega$ (a Carath\'eodory integrand). Let $(v_j)\subset L^1(\Omega,\V)$ generate $\ym\in\Y(\Omega,\V)$ be such that $(f(\,\bigcdot\,,v_j))_j$ is uniformly integrable. Then
$$
\lim_{j\rightarrow\infty} \int_{\Omega}f(x, v_j(x))\dif x= \int_{\Omega}\langle\nu_x, f(x,\,\bigcdot\,)\rangle\dif x.
$$
\end{proposition}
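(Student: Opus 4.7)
The plan is to approximate $f$ by cutoffs in $z$ so that Proposition \ref{prop:lsc_YM} applies, and then remove the truncation using uniform integrability together with the $L^1$-bound inherited from Young measure generation. Fix a continuous cutoff $\chi\colon\R\to[0,1]$ with $\chi\equiv 1$ on $[-1,1]$ and $\chi\equiv 0$ outside $[-2,2]$, and set $\chi_R(t)=\chi(t/R)$ and $f_R(x,z)\coloneqq f(x,z)\chi_R(|z|)$. Since $f_R(x,\,\bigcdot\,)$ is supported in $\{|z|\leq 2R\}$, $f_R$ is a Carath\'eodory integrand with trivial recession function $f_R^\infty\equiv 0$.

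Identify each $v_j$ with the absolutely continuous measure $v_j\mathscr L^n\in\mathcal M(\Omega,\V)$, which generates the same Young measure $\ym$. For every fixed $R$, Proposition \ref{prop:lsc_YM} applied to $f_R$ and to $|f_R|$ yields
\begin{align*}
\lim_{j\to\infty}\int_\Omega f_R(x,v_j(x))\dif x &= \int_\Omega\langle\nu_x,f_R(x,\,\bigcdot\,)\rangle\dif x,\\
\int_\Omega\langle\nu_x,|f_R(x,\,\bigcdot\,)|\rangle\dif x &= \lim_{j\to\infty}\int_\Omega|f_R(x,v_j(x))|\dif x \leq \sup_j\|f(\,\bigcdot\,,v_j)\|_{L^1(\Omega)} < \infty,
\end{align*}
the last bound coming from uniform integrability. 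Monotone convergence as $|f_R|\uparrow|f|$ gives $x\mapsto\langle\nu_x,|f(x,\,\bigcdot\,)|\rangle\in L^1(\Omega)$, and dominated convergence then delivers $\int_\Omega\langle\nu_x,f_R\rangle\dif x\to\int_\Omega\langle\nu_x,f\rangle\dif x$ as $R\to\infty$.

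The key remaining step is to transfer the truncation back to the left-hand side, uniformly in $j$. For any threshold $M>0$ I would split
\begin{align*}
\int_\Omega|f(x,v_j)-f_R(x,v_j)|\dif x \leq \int_{\{|f(\,\bigcdot\,,v_j)|>M\}}|f(x,v_j)|\dif x + M\,\mathscr L^n(\{|v_j|>R\}).
\end{align*}
Uniform integrability makes the first term smaller than $\varepsilon/2$ uniformly in $j$ once $M$ is large enough, and Chebyshev's inequality $\mathscr L^n(\{|v_j|>R\})\leq R^{-1}\|v_j\|_{L^1(\Omega)}$, combined with $\sup_j\|v_j\|_{L^1}<\infty$ (a consequence of the boundedness of generating sequences in $\mathcal M(\Omega,\V)$), controls the second term by $MC/R<\varepsilon/2$ for $R$ large. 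Sending $j\to\infty$ and then $R\to\infty$ in the equality of the second paragraph then produces the claim.

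The principal obstacle is precisely this interchange of limits in $R$ and $j$: Proposition \ref{prop:lsc_YM} fixes $R$ and passes $j\to\infty$, but we need the opposite order. The naive bound $\int_{\{|v_j|>R\}}|f(x,v_j)|\dif x$ is \emph{not} controlled by uniform integrability of $(f(\,\bigcdot\,,v_j))$ alone, as a bounded integrand such as $f\equiv 1$ shows. Splitting instead by the size of $|f(\,\bigcdot\,,v_j)|$ and trading off $M$ against $R$ is the essential trick that turns the two separate ingredients---uniform integrability of $(f(\,\bigcdot\,,v_j))$ and the Chebyshev-type bound coming from the $L^1$ control on $(v_j)$---into a single uniform-in-$j$ smallness estimate.
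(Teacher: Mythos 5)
The paper itself does not prove this proposition: it is quoted from the lecture-notes reference as a known variant of the fundamental theorem of Young measures, so there is no in-paper argument to compare against. Your proof is the standard truncation argument for this statement, and its architecture is sound: truncate in $z$, represent the truncated functionals via Proposition~\ref{prop:lsc_YM} (whose concentration term vanishes because $f_R^\infty\equiv 0$), transfer the truncation to the Young-measure side by monotone and dominated convergence, and---the genuinely nontrivial step, which you identify and execute correctly---obtain the uniform-in-$j$ smallness of $\int_{\{|v_j|>R\}}|f(x,v_j)|\,\dif x$ by splitting according to the size of $|f(\,\bigcdot\,,v_j)|$ and trading $M$ against $R$ via Chebyshev and the $L^1$-bound on $(v_j)$ (which indeed follows from generation, by testing $\eps_{v_j}$ against $\Phi(x,z)=|z|$).

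The one step that does not go through as written is the application of Proposition~\ref{prop:lsc_YM} to $f_R$ and $|f_R|$. That proposition requires the recession function to exist as a \emph{joint} limit in $(x',z',t)$, and this is not inherited from $f$ being merely Carath\'eodory: if $f$ is unbounded in $x$ on compact $z$-sets (e.g.\ $f(x,z)=g(x)h(z)$ with $g\in L^1\setminus L^\infty_{\locc}$), then at $z=0$ one may send $x'\to x$ and $t\to+\infty$ at relative rates for which $f_R(x',tz')/t$ diverges, so $f_R^\infty$ fails to exist at $(x,0)$; nor is $f_R$ of linear growth, so it does not lie in any class for which the representation is asserted. The standard patch is a Scorza--Dragoni reduction: pick compact sets $K_\delta\subset\Omega$ with $\mathscr L^n(\Omega\setminus K_\delta)<\delta$ on which $f$ is jointly continuous, apply the representation to $\mathbf 1_{K_\delta}f_R$ (which is bounded, Carath\'eodory, and has recession function identically zero), and absorb the contribution of $\Omega\setminus K_\delta$ on the left-hand side by the absolute-continuity form of uniform integrability, and on the right-hand side by the integrability of $x\mapsto\langle\nu_x,|f(x,\,\bigcdot\,)|\rangle$ obtained in the limit $\delta\to 0$, $R\to\infty$. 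For the integrands actually used in the paper (the Huber functions, which are continuous in $x$ and locally bounded) your argument is already complete; the insertion above is needed only for the full Carath\'eodory generality of the statement.
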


\bibliographystyle{amsplain}
\bibliography{kostasbib}

\end{document}